\theoremstyle{plain}
\newtheorem{main}{Theorem}
\newtheorem{maincor}[main]{Corollary}
\newtheorem{theorem}{Theorem}[section]
\newtheorem{lemma}[theorem]{Lemma}
\newtheorem{proposition}[theorem]{Proposition}
\theoremstyle{remark}
\newtheorem{remark}[theorem]{Remark}
\newtheorem{definition}[theorem]{Definition}
\newcommand\numberthis{\addtocounter{equation}{1}\tag{\theequation}}
\newcommand{\quand}{\quad\text{and}\quad}
\newcommand{\Leb}{\operatorname{vol}}
\newcommand{\C}{\operatorname{C}}
\newcommand{\diam}{\operatorname{diam}}
\newcommand{\G}{\operatorname{G}}
           \def\ea{\end{array}}
          \def\ec{\end{center}}
     \def\ed{\end{description}}
        \def\ee{\end{equation}}
       \def\eea{\end{eqnarray}}
     \def\eeaa{\end{eqnarray*}}
 \def\et{\end{thebibliography}}
\def\bib{\bibitem}
\def\Orb{{\rm Orb}}
\def\Diff{{\rm Diff}}
\def\Cl{{\rm Cl}}
\def\Gibb{{\rm Gibb}}
\def\inv{{\rm inv}}
\def\supp{\operatorname{supp}}
\def\cG{{\mathcal G}}
\def\cA{{\mathcal A}}
\def\cC{{\mathcal C}}
\def\cI{{\mathcal I}}
\def\cU{{\mathcal U}}
\def\cV{{\mathcal V}}
\def\cR{{\mathcal R}}
\def\cB{{\mathcal B}}
\def\cH{{\mathcal H}}
\def\cF{{\mathcal F}}
\def\cM{{\mathcal M}}
\def\cR{{\mathcal R}}
\def\cS{{\mathcal S}}
\def\loc{\operatorname{loc}}
\def\vep{\varepsilon}
\def\TT{{\mathbb T}}
\title[New criterion]{New criterion of physical measures for partially hyperbolic diffeomorphisms}
\author{Yongxia Hua, Fan Yang and Jiagang Yang}
\date{\today}
\thanks{Y.H. is supported
by NSFC No. 11401133. J.Y. is partially supported by CNPq, FAPERJ, and PRONEX}
\address{Department of Mathematics, Southern University of Science and Technology of China, 1088 Xueyuan Rd., Xili, Nanshan District, Shenzhen, Guangdong, China 518055}
\address{Department of Mathematics, University of Oklahoma, Norman, Oklahoma, USA}
\address{Department of Mathematics, Southern University of Science and Technology of China, Guangdong, China; and 
Departamento de Geometria, Instituto de Matem\'atica e Estat\'istica, Universidade
Federal Fluminense, Niter\'oi, Brazil}
\email{yangjg\@@impa.br}
\begin{document}

\begin{abstract}
We show that for any $C^1$ partially hyperbolic diffeomorphism, there is a full volume subset,
such that any Cesaro limit of any point in this subset satisfies the Pesin formula for partial entropy.

This result has several important applications. First we show that for any $C^{1+}$ partially hyperbolic diffeomorphism with one  dimensional center, there is a full volume subset, such that every point in this set belongs to either the basin of
a physical measure with non-vanishing center exponent, or the center exponent of any  limit of the sequence $\frac1n\sum_{i=0}^{n-1}\delta_{f^i(x)}$ 
is vanishing.

We also prove that for any diffeomorphism with mostly contracting center, it admits a $C^1$ neighborhood
such that every diffeomorphism in a $C^1$ residual subset of this open set admits finitely many physical
measure, whose basins have full volume.
\end{abstract}

\maketitle

\section{Introduction}
\emph{Partially hyperbolic} diffeomorphisms were proposed by Brin, Pesin~\cite{BP} and Pugh,
Shub~\cite{PS72} independently at the early 1970's, as an extension of the class of Anosov
diffeomorphisms \cite{A,AS}. A diffeomorphism $f$ being partially hyperbolic means that there exists a
decomposition $TM = E^s \oplus E^c \oplus E^u$ of the tangent bundle $TM$ into three
continuous invariant sub-bundles: $E^s_x$, $E^c_x$ and $E^u_x$, such that $Tf \mid E^s$ is
uniform contraction, $Tf\mid E^u$ is uniform expansion and $Tf \mid E^c$ lies in between:
$$
\frac{\|Tf(x)v^s\|}{\|Tf(x)v^c\|} \le \frac 12
\quand
\frac{\|Tf(x)v^c\|}{\|Tf(x)v^u\|} \le \frac 12
$$
for any unit vectors $v^s\in E^s$, $v^c\in E^c$, $v^u\in E^u$ and any $x\in M$.

Partially hyperbolic diffeomorphisms form an open subset of the space of $C^r$ diffeomorphisms
of $M$, for any $r\ge 1$. The \emph{stable sub-bundle} $E^s$ and the \emph{unstable sub-bundle}
$E^u$ are uniquely integrable, that is, there are unique foliations $\cF^s$ and $\cF^u$
whose leaves are smooth immersed sub-manifolds of $M$ tangent to $E^s$ and $E^u$, respectively,
at every point.

The \emph{partial entropy} of an invariant probability measure for unstable foliation
is a value to quantify the complexity of the measure generated
on this foliation, whose precise definition will be given in Section~\ref{ss.partialentropy} (see also~\cite{VY3}). Let $\mu$
be any invariant measure of $f$, denote the partial entropy of $\mu$ along the unstable foliation
$\cF^u$ by $h_\mu(f,\cF^u)$.

Let $f$ be a partially hyperbolic diffeomorphism. Denote by
$$\G^u(f)=\{\mu; h_\mu(f,\cF^u)-\int \log \det(Tf\mid_{E^u(x)})d \mu(x)\geq 0\}.$$
Recall that if $f$ is $C^{1+\alpha}$, then by Ruelle's inequality:
$$
h_\mu(f,\cF^u)-\int \log \det(Tf\mid_{E^u(x)})d \mu(x)\leq 0.
$$
A similar result for $C^1$ diffeomorphisms with dominated splitting is proved in~\cite{WWZ}. We will see that for all $C^1$ diffeomorphism $f$, the set $\G^u(f)$ is always non-empty, compact and convex (Proposition~\ref{p.Gu}).

For a diffeomorphism $f:M\to M$ on some compact Riemannian manifold $M$.
An invariant probability $\mu$ is a \emph{physical measure} for $f$ if the
set of  points $z\in M$ for which
\begin{equation}\label{eq.SRBmeasure}
\frac 1n \sum_{i=0}^{n-1} \delta_{f^i(z)} \to \mu
\quad \text{(in the weak$^*$ sense)}
\end{equation}
has positive volume. This set is denoted by $Basin(\mu)$ and called the \emph{basin}
of $\mu$. 

Following Pesin and Sinai~\cite{PS82} and Bonatti and Viana~\cite{BV00} (see also~\cite[Chapter 11]{BDVnonuni}),
we call \emph{Gibbs $u$-state} any invariant probability measure whose conditional probabilities
(see Rokhlin~\cite{Rok49}) along strong unstable leaves are absolutely continuous with respect to the
Lebesgue measure on the leaves. The set of Gibbs $u$-states plays important roles in the study of physical measures for partially hyperbolic diffeomorphisms. In particular, it is shown in~\cite{BDVnonuni} that if $f$ is $C^{1+\alpha}$, then every ergodic  Gibbs $u$-state such that all the center Lyapunov exponents are negative must be a physical measure. 

A program for investigating the physical measures of partially hyperbolic
diffeomorphisms was initiated by Alves, Bonatti, Viana in \cite{ABV00,BV00},
who proved the existence and finiteness when $f$ is $C^{1+\alpha}$ and is either ``mostly expanding''
(asymptotic forward expansion) or ``mostly contracting'' (asymptotic forward
contraction) along the center direction.

While the study on the existence and finiteness of physical measures for $C^{1+\alpha}$ systems is fruitful, the results for $C^1$ systems are surprisingly lacking. This is mainly due to:
\begin{itemize}
\item lack of candidate measures, since Gibbs $u$-states are to the Pesin's formula, which requires higher regularity; 
\item when considered as a equilibrium state, the associated potential function $\phi=-\log \det Tf|_{E^u}$ is not H\"older; in particular, the distortion is unbounded;
\item the Pesin stable lamination is not absolutely continuous for $C^1$ diffeomorphisms.
\end{itemize}
As far as the authors know, \cite{CQ} and~\cite{Q} are  the only existing work on the existence of physical measures for $C^1$ dynamical systems, where the systems in question are uniformly expanding/hyperbolic.

The main theorem in this paper states  that measures in $\G^u(f)$ are the natural candidates for physical measures.
\begin{main}\label{main.newcriterion}

Let $f$ be a $C^1$ partially hyperbolic diffeomorphism. Then there is a full volume subset $\Gamma$ such that
for any $x\in \Gamma$, any limit of the sequence $\frac{1}{n}\sum_{i=0}^{n-1}\delta_{f^i(x)}$ belongs to $\G^u(f)$.
\end{main}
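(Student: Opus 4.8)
The plan is to prove this by combining an entropy estimate along unstable leaves with the ergodic decomposition of Cesàro limits, exploiting the fact that $\G^u(f)$ is compact and convex (Proposition~\ref{p.Gu}). The key object to control is the partial entropy $h_\mu(f,\cF^u)$, which is notoriously hard to estimate from below for individual measures; the trick is to work with a fixed measurable partition $\cP$ subordinate to $\cF^u$ and track, along a typical orbit, how the ``complexity'' of the pushed-forward partition $f^{-n}\cP$ restricted to unstable plaques grows. First I would fix a partition $\cP$ which is subordinate to $\cF^u$ in the sense of Ledrappier--Strelcyn/Ledrappier--Young, so that $h_\mu(f,\cF^u) = H_\mu(f^{-1}\cP \mid \cP)$ for every invariant $\mu$; the partial entropy is then an average of a local quantity that can, in principle, be read off along orbits.

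Next I would set up the main combinatorial/geometric estimate: for Lebesgue-almost every $x$ and a typical point in an unstable plaque through $x$, the number of atoms of $\bigvee_{i=0}^{n-1} f^{-i}\cP$ meeting a unit ball in the unstable leaf, weighted appropriately, grows at the rate $\int \log\det(Tf\mid_{E^u})\,d\mu_n$ where $\mu_n = \frac1n\sum \delta_{f^i(x)}$ — this is essentially the statement that the unstable Jacobian governs the expansion of unstable volume, and the Lebesgue density along unstable leaves is used here (this is where Gibbs $u$-states enter implicitly, since one disintegrates volume along $\cF^u$ and the conditionals are equivalent to Riemannian volume on plaques). Passing to a subsequence $n_k$ along which $\mu_{n_k}\to\mu$, the Misiurewicz-style semicontinuity argument for conditional entropy and the subadditivity of the partition-counting function should yield
\begin{equation}\label{eq.keyineq}
h_\mu(f,\cF^u) \;\ge\; \limsup_{k\to\infty} \frac1{n_k}\log\#\Big(\textstyle\bigvee_{i=0}^{n_k-1} f^{-i}\cP\Big)_{\text{loc}} \;\ge\; \int \log\det(Tf\mid_{E^u})\,d\mu,
\end{equation}
which is exactly $\mu\in\G^u(f)$. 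The full-volume set $\Gamma$ is then the set of $x$ at which the disintegration of volume along $\cF^u$ behaves well, the Birkhoff-type averages of $\log\det(Tf\mid_{E^u})$ converge along plaques, and the counting estimate holds — each of these is a full-volume condition (the first by absolute continuity of $\cF^u$, which holds for $C^1$ partially hyperbolic systems because $E^u$ is uniquely integrable with Lipschitz — in fact $C^1$ — leaves, and the density argument is Fubini-type).

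The main obstacle will be the first inequality in \eqref{eq.keyineq}: relating the \emph{asymptotic} partial entropy of the limit measure $\mu$ to the \emph{finite-time} growth of partition atoms along a $\mu$-generic orbit. In the $C^{1+\alpha}$ world one would invoke the variational principle for $h(f,\cF^u)$ together with absolute continuity of stable holonomies, but here we are in the $C^1$ category, so one must instead argue directly: use the fact that $\cP$ can be taken with boundary of zero $\mu$-measure (for all $\mu$ in the relevant compact set simultaneously, or handle the exceptional measures by a limiting argument), apply the Shannon--McMillan--Breiman theorem for the factor system associated to $\cF^u$, and upgrade the pointwise statement to the Cesàro limit via the upper semicontinuity of $\mu\mapsto H_\mu(f^{-1}\cP\mid\cP)$ on the set where $\partial\cP$ is null. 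A secondary difficulty is that $\cP$ is a measurable, not open, partition, so the counting function $\#(\cdot)_{\text{loc}}$ must be replaced by a measure-theoretic refinement count, controlled by the metric entropy of the Rokhlin disintegration; making this rigorous while keeping the estimate uniform in $x$ over a full-volume set is where the technical heart of the argument lies. Once \eqref{eq.keyineq} is established for every limit point along every convergent subsequence, convexity and compactness of $\G^u(f)$ finish the proof, since a weak-$*$ limit of $\mu_n$ is automatically $f$-invariant.
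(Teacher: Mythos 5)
Your high-level strategy -- relating the partial entropy of Cesàro limits to the unstable Jacobian through the growth of refined partitions along unstable plaques, then invoking compactness and convexity of $\G^u(f)$ -- is indeed the engine driving the paper's proof, and you correctly identify the hard step. But the tools you propose to close that step do not work. The first inequality in your chain asserts that $h_\mu(f,\cF^u)$ for the weak-$*$ limit $\mu$ dominates a $\limsup$ of finite-time counting along the orbit. Shannon--McMillan--Breiman gives a $\mu$-almost-everywhere statement, whereas Theorem~\ref{main.newcriterion} demands a \emph{Lebesgue}-almost-everywhere statement, and there is no reason a Lebesgue-generic $x$ should be $\mu$-generic for its own Cesàro limit. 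The Cesàro averages $\mu_n=\frac1n\sum\delta_{f^j(x)}$ are not $f$-invariant, so $h_{\mu_n}(f,\cF^u)$ is not even defined; and upper semi-continuity of $\nu \mapsto H_\nu(f^{-1}\cP\mid\cP)$ fails when $\cP$ is a non-finite measurable partition subordinate to $\cF^u$, which is precisely why the paper introduces the finite approximating partitions $\cA_{n,m}$, $\cA_{0,m}$ of Section~\ref{ss.charts} and relies on Proposition~\ref{p.approximate}. Finally, a single fixed subordinate $\cP$ cannot serve all limit measures at once, since $\mu$-subordination and the zero-boundary condition are $\mu$-dependent.

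What actually closes the gap in the paper is not a pointwise ergodic theorem but a large-deviations bound followed by Borel--Cantelli. One fixes $\vep>0$, uses upper semi-continuity of the partial entropy (Theorem~\ref{t.semicontinuous}) to produce a uniform gap $\alpha>0$ on the compact set $\cB_\vep$ of invariant measures at distance $\ge\vep$ from $\G^u(f)$, and covers $\cB_\vep$ by finitely many \emph{convex} balls $U_i$ in the space of all probability measures, each carrying a finite approximating partition adapted to its centre. Theorem~\ref{t.exponentialtail} then shows that $\Leb\{x\in I : \frac1n\sum_{j<n}\delta_{f^j(x)}\in U_i\}$ decays exponentially: take an $(n,\delta)$-separated set $E_n$ in the bad set, form the measure on $E_n$ weighted by $e^{\phi^u_n}$, average its pushforwards, and use convexity of $U_i$ to place the average $\mu_n$ inside $U_i$; the pressure identity $\log\sum_{x\in E_n}e^{\phi^u_n(x)}=H_{\nu_{n,0}}(\bigvee_{i<n} f^{-i}\cA)+n\int\phi^u\,d\mu_n$ turns the gap $\alpha$ into an exponential bound on the weighted count, which combined with the Bowen-ball volume estimate $\Leb(B_n(x,\delta))\le K e^{n\alpha/4}e^{\phi^u_n(x)}$ gives exponential decay of the bad set's Lebesgue measure. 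Borel--Cantelli then delivers the full-volume conclusion without ever invoking a pointwise theorem for a non-invariant measure. So the missing ideas are (i) the finite-partition approximation of partial entropy that makes semi-continuity usable on non-invariant measures, and (ii) the exponential-measure/Borel--Cantelli scaffolding that replaces SMB; without both, the first inequality in your display remains unproven.
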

\begin{remark}\label{rk.uniqueG}
As a direct consequence, if $\G^u(f)$ consists of a unique measure, then this measure is automatically a physical measure whose basin has full volume.
\end{remark}

The partially hyperbolic diffeomorphisms with
\emph{mostly contracting center}, which were first studied
in~\cite{BV00}, are those $\C^{1+}$ partially hyperbolic diffeomorphisms whose
Gibbs $u$-states only have negative center Lyapunov exponents. 

Mostly contracting diffeomorphisms contain an abundance of systems. See for example~\cite{BV00} and~\cite{DVY}. It is also shown in~\cite{Y16} that mostly contracting is a $C^1$ open property. Moreover, if $f$ is a $C^{1+}$ partially hyperbolic volume preserving diffeomorphisms with one dimensional center, such that the volume has negative center exponent and the unstable foliation is minimal, then $f$ is mostly contracting. 

Our next theorem generalizes the main result of~\cite{DVY} to generic $C^1$ diffeomorphisms.

\begin{main}\label{main.mostlycontracting}
Let $f$ be a $C^{1+}$ partially hyperbolic diffeomorphism with mostly contracting center.
Then it has a $C^1$ neighborhood $\cU$ and a residual subset of diffeomorphism $\cR\subset \cU$,
such that every $f\in \cU$ has only finitely many physical measures, whose basins cover a set with full volume.
\end{main}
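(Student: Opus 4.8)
The plan is to feed the structure of mostly contracting diffeomorphisms into Theorem~\ref{main.newcriterion} and then to descend from the $C^1$-dense family of $C^{1+}$ diffeomorphisms, where the conclusion is already available, to a $C^1$-residual set by a Baire category argument.

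Since mostly contracting is $C^1$-open \cite{Y16}, fix a $C^1$-neighborhood $\cU$ of $f$ on which this property persists; shrinking $\cU$ and using that a skeleton in the sense of \cite{DVY} is made of finitely many hyperbolic periodic orbits with negative center exponent subject to a $C^1$-open crossing condition, we may assume that the continuation of a skeleton of $f$ is a skeleton for every $g\in\cU$, and (by the upper semicontinuity of the number of physical measures proved in \cite{DVY}) that every $C^{1+}$ diffeomorphism in $\cU$ has at most $N$ physical measures, for a fixed $N$. For such a $g$ the physical measures coincide with the ergodic Gibbs $u$-states \cite{BV00,DVY}, their basins cover a full volume set, and --- by the Ledrappier-type characterization of Gibbs $u$-states together with Ruelle's inequality for partial entropy under domination \cite{WWZ} --- one has $\G^u(g)=\{\text{Gibbs } u\text{-states of } g\}$, so $\G^u(g)$ is the simplex spanned by the (at most $N$) physical measures of $g$.

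For arbitrary $g\in\cU$, let $\cO(g)\subset\cM(M)$ be the minimal compact set of measures such that, for Lebesgue-a.e. $x$, every accumulation point of $\frac1n\sum_{i=0}^{n-1}\delta_{g^i(x)}$ lies in $\cO(g)$. Theorem~\ref{main.newcriterion} gives $\cO(g)\subset\G^u(g)$ for all $g\in\cU$, and the previous paragraph shows $\cO(g)$ is a finite set of at most $N$ measures, with $\Leb\big(\bigcup_{\nu\in\cO(g)} Basin(\nu)\big)=\Leb(M)$, whenever $g$ is $C^{1+}$. The heart of the matter is to prove that
$$\cR=\Big\{g\in\cU:\ \cO(g)\ \text{finite},\ \Card(\cO(g))\le N,\ \Leb\big(\textstyle\bigcup_{\nu\in\cO(g)} Basin(\nu)\big)=\Leb(M)\Big\}$$
is residual in $\cU$; since $C^{1+}\cap\cU$ is $C^1$-dense and contained in $\cR$, it suffices to exhibit a dense $G_\delta$ inside $\cR$. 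The natural route is to present $\cR$ as a countable intersection of $C^1$-open conditions, by controlling the empirical measures of a definite-volume set of points over all large times simultaneously; here the uniform hyperbolicity coming from the skeleton of $g$ (uniform contraction on $E^s$ and on $E^c$ along the orbit segments captured by the skeleton, uniform crossing times) is what makes such a finite-time, open description possible. Alternatively, one checks that the set-valued map $g\mapsto\cO(g)$ is upper semicontinuous on $\cU$, so that at each of its (residual) continuity points $\cO(g)$ is a Hausdorff limit of the finite sets $\cO(g')$, $g'\in C^{1+}\cap\cU$, hence itself finite with at most $N$ elements, the full-volume-basin property passing to the limit as well.

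Finally, for $g\in\cR$, Theorem~\ref{main.newcriterion} places the accumulation set of $\frac1n\sum_{i=0}^{n-1}\delta_{g^i(x)}$ inside the finite set $\cO(g)$ for a.e. $x$; being connected, this accumulation set is a single measure, so the Ces\`aro averages converge a.e. Thus a.e. $x$ lies in the basin of an element of $\cO(g)$; those elements with positive-volume basin are exactly the physical measures of $g$ --- there are no others, since any physical measure lies in $\G^u(g)$ by Theorem~\ref{main.newcriterion} and its basin meets one of the basins above in positive volume --- and their basins cover a full volume set. The step I expect to be the main obstacle is the middle one: making the descent from the $C^{1+}$ case to a $C^1$-residual set rigorous forces one to use the mostly contracting structure (skeletons, uniform contraction, uniform return times) quantitatively rather than as a black box.
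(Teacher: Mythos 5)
Your high-level plan (persistence of mostly contracting on a $C^1$-neighborhood, use of Theorem~\ref{main.newcriterion} to constrain Cesàro limits, descent from the $C^1$-dense $C^{1+}$ systems to a residual set, and the connectedness argument to turn a finite target set into almost-everywhere Cesàro convergence) is the right skeleton, and the last paragraph --- accumulation sets of Cesàro averages are connected, so finiteness of $\cO(g)$ forces a.e.\ convergence, hence the basins of the finitely many elements of $\cO(g)$ cover full volume --- is a clean, correct closing step. But the decisive middle step, proving that the set
$$\cR=\bigl\{g\in\cU:\ \cO(g)\ \text{finite},\ \Card(\cO(g))\le N,\ \Leb\bigl(\textstyle\bigcup_{\nu\in\cO(g)} Basin(\nu)\bigr)=\Leb(M)\bigr\}$$
is residual, is only gestured at, and neither of your two routes is established. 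Route (a), a countable intersection of $C^1$-open conditions controlling empirical measures, is not spelled out and it is far from clear how uniform hyperbolicity from the skeleton alone yields open, finite-time conditions whose intersection pins down $\cO(g)$. Route (b), upper semicontinuity of $g\mapsto\cO(g)$, is simply asserted. The object $\cO(g)$ is a genuinely delicate one for $C^1$ diffeomorphisms (it is a priori a proper subset of $\G^u(g)$ whose definition is in terms of Lebesgue-a.e.\ behavior, not an entropy inequality), and I do not see how to get its upper semicontinuity without first establishing the very geometric structure you are trying to derive. You flag this yourself as ``the main obstacle,'' and indeed it is: the proposal reduces the theorem to an unproved semicontinuity claim that is at least as hard as the statement.

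For comparison, the paper does not attempt to show $\cO(g)$ is upper semicontinuous. Instead it works with $\G^u(g)$, for which upper semicontinuity is already available (Proposition~\ref{p.Guppercontinuous}, built on Theorem~A of~\cite{Y16}). It stratifies $\cU$ into the open level sets $\cV_k$ where the pre-skeleton is actually a skeleton with exactly $k$ saddles, and then intersects three residual sets: (i) continuity points of $g\mapsto\G^u(g)$, (ii) continuity points of the homoclinic classes $g\mapsto H(p_i(g),g)$ (lower semicontinuous), and (iii) the Morales--Pacifico residual set on which $\overline{\cF^u(\Orb(p_i))}$ is Lyapunov stable. At a point $h$ in this intersection, $\G^u(h)$ is the Hausdorff limit of the $k$-simplices $\G^u(g_n)$ for $C^{1+}$ approximants $g_n$, hence a simplex with at most $k$ extreme points; the skeleton structure and the connecting lemma force the closures $\overline{\cF^u(\Orb(p_i(h)))}$ to be pairwise disjoint, which pins the number of extreme points at exactly $k$. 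Lyapunov stability then gives absorbing neighborhoods $V_i\subset U_i$ on which Theorem~\ref{main.newcriterion} traps Cesàro limits, supported in $U_i$, so they must equal $\mu_{h,i}$; a Lebesgue density point argument closes the gap to full volume. This is the concrete machinery your proposal would need to supply to make either route (a) or (b) rigorous.
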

In fact, we will give a more precise description about the number of physical measures and the structure of their support for diffeomorphisms $C^1$ close to a mostly contracting system. See Theorem~\ref{t.mostlyoontracting}.

A similar result for partially hyperbolic diffeomorphisms with mostly expanding center also holds,
as shown in~\cite{Y18}. In particular, if $f$ is partially hyperbolic with one dimensional center, accessible and volume preserving, such that the volume has positive center Lyapunov exponent, then $C^1$ generic diffeomorphisms $g$ in a neighborhood of $f$ has a unique Physical measure, whose basin has full volume. This provides a measure theoretic way to prove robust transitivity, see~\cite[Theorem F]{Y16}. See~\cite[Corollary E, Theorem F]{Y16}. The proof there also makes use of our new criterion in Theorem~\ref{main.newcriterion}.

Denote by $PH_1$ the set of $C^1$  partially hyperbolic diffeomorphisms with one dimensional center, and let $PH_1^r = PH_1\cap \Diff^r(M)$. Our next theorem gives a complete description for the structure of the Cesaro limit of typical points for diffeomorphisms in $PH_1^{1+}$.

Recall that almost every ergodic component of a Gibbs $u$-state is again a Gibbs $u$-state. We denote by $\Gibb^{u,0}$ the invariant measures whose ergodic decomposition
are supported on ergodic Gibbs $u$-states with zero  center exponent.

\begin{main}\label{main.center1d}
Let $f\in PH_1$ be a $C^{1+}$ partially hyperbolic diffeomorphism. Then there is a full volume subset $\Gamma$, such that for any $x\in \Gamma$, we have
\begin{itemize}
\item[(i)] either $x$ is in the basin of a physical measure $\mu^-$, where $\mu^-$ is an ergodic Gibbs $u$-state with negative center exponent;

\item[(ii)] or $x$ is in the basin of a physical measure $\mu^+$, where $\mu^+$ is an ergodic Gibbs $u$-state with positive center exponent and is a Gibbs $cu$-state, i.e., the conditional measures of $\mu^+$ along center unstable leaves are absolutely continuous with respect to the volume there;

\item[(iii)] or every Cesaro limit of $\frac{1}{n}\sum_{i=0}^{n-1}\delta_{f^i(x)}$ belongs to $\Gibb^{u,0}$.
\end{itemize}
\end{main}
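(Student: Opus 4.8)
The plan is to deduce Theorem~\ref{main.center1d} from Theorem~\ref{main.newcriterion} together with the one-dimensionality of the center bundle. Start from the full volume set $\Gamma$ provided by Theorem~\ref{main.newcriterion}, so that for every $x\in\Gamma$ and every Cesaro limit $\mu$ of $\frac1n\sum_{i=0}^{n-1}\delta_{f^i(x)}$ we have $\mu\in\G^u(f)$; intersecting with the set where Birkhoff averages of the continuous function $\log\det(Tf|_{E^u})$ behave well, and with the full volume set on which Gibbs $u$-state machinery applies, we may assume in addition that $\mu$ is a Gibbs $u$-state. Indeed, by the Ruelle-type inequality cited in the introduction the defining inequality of $\G^u(f)$ is in fact an equality for such $\mu$, i.e. $h_\mu(f,\cF^u)=\int\log\det(Tf|_{E^u})\,d\mu$, and the characterization of Gibbs $u$-states via the Pesin formula for the unstable partial entropy (the Ledrappier--Young type entropy formula along $\cF^u$, available since $f$ is $C^{1+}$) shows that $\mu$ is a Gibbs $u$-state. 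Since almost every ergodic component of a Gibbs $u$-state is a Gibbs $u$-state, the ergodic decomposition of $\mu$ is carried by ergodic Gibbs $u$-states.

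Next I would split according to the sign of the center exponent, using $\dim E^c=1$. Decompose the space of ergodic Gibbs $u$-states into those with negative, zero, and positive center Lyapunov exponent, call them $\cG^-$, $\cG^0$, $\cG^+$. For the negative part: an ergodic Gibbs $u$-state with negative center exponent is a physical measure whose basin contains a full-volume subset of a neighborhood of its support — this is the classical Bonatti--Viana result quoted in the introduction (for $C^{1+}$ systems). For the positive part: an ergodic Gibbs $u$-state $\mu^+$ with positive center exponent has all Lyapunov exponents transverse to $E^s$ positive, so by Pesin theory the unstable manifolds of $\mu^+$ are tangent to $E^c\oplus E^u$; the absolute continuity of conditionals along $\cF^u$ (Gibbs $u$-state property) together with the Ledrappier--Young argument in the center-unstable direction upgrades this to absolute continuity of conditionals along the center-unstable Pesin manifolds, so $\mu^+$ is a Gibbs $cu$-state and hence a physical measure with basin of full volume near its support (this is where one invokes the $cu$-Gibbs / SRB structure). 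The remaining case is that $\mu$ gives zero mass to ergodic components with nonzero center exponent, i.e. $\mu\in\Gibb^{u,0}$.

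To turn this per-limit trichotomy into a per-point trichotomy one needs to rule out, for a full volume set of $x$, the ``mixed'' situation where different Cesaro limits of the same orbit fall into different cases, or where a limit is a nontrivial combination of ergodic Gibbs $u$-states with exponents of different signs. The standard device: for a full volume set of $x$ the forward orbit's empirical measures, if they accumulate on a Gibbs $u$-state with a hyperbolic (here: nonzero center exponent) ergodic component, actually must fall into the basin of one such ergodic component. Concretely, using that there are at most countably many ergodic physical measures (finiteness is not needed, countability suffices; one can also first reduce to finitely many by local considerations), and that the union of their basins is Borel, one removes a zero volume set and argues that if $x$ is not eventually in any such basin then every Cesaro limit of $x$ must lie entirely in $\Gibb^{u,0}$. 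This last argument is the main obstacle: it requires controlling how a $C^1$ (only $C^{1+}$) orbit can shadow the unstable-disk dynamics near a hyperbolic Gibbs $u$-state, and handling the failure of absolute continuity of the Pesin \emph{stable} lamination flagged in the introduction — one sidesteps it by working entirely with Gibbs $u$-states and the unstable/center-unstable conditionals, invoking that a point in $\Gamma$ whose orbit has an empirical limit with a negative-center-exponent (resp.\ positive-center-exponent Gibbs $cu$) ergodic component is, by the physicality of that component, in its basin for a full volume set of such $x$. Assembling the three full volume sets (the one from Theorem~\ref{main.newcriterion}, the Birkhoff-regular set for $\log\det(Tf|_{E^u})$, and the union of basins of the countably many ergodic hyperbolic Gibbs $u$-states together with its ``complement is attracted to $\Gibb^{u,0}$'' property) yields the desired full volume $\Gamma$ for Theorem~\ref{main.center1d}.
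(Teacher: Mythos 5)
Your proposal has the right skeleton (derive a per-limit trichotomy on the sign of the center exponent from Theorem~\ref{main.newcriterion}, then upgrade to a per-point trichotomy by a basin argument), which is the paper's strategy as well, but there are two genuine gaps.

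The first is the claim that an ergodic Gibbs $u$-state $\mu^+$ with positive center exponent is automatically a Gibbs $cu$-state via ``the Ledrappier--Young argument in the center-unstable direction.'' This is false. Being a Gibbs $u$-state only gives the partial entropy formula $h_{\mu^+}(f,\cF^u)=\int\log\det(Tf|_{E^u})\,d\mu^+$; it does not give $h_{\mu^+}(f)=\int\log\det(Tf|_{E^{cu}})\,d\mu^+$, which is what Ledrappier's characterization needs for absolutely continuous conditionals on the Pesin $cu$-manifolds. There exist Gibbs $u$-states with positive center exponent that are not SRB. The paper fills this gap by introducing $\G^{cu}(f)$ (the measures satisfying the full Pesin formula as an inequality) and invoking the result of Catsigeras--Cerminara--Enrich~\cite{CCE} that Cesaro limits of Lebesgue-typical points lie in $\G^{cu}(f)$ for $C^1$ diffeomorphisms with dominated splitting. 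This is an independent full-volume statement, not derivable from Birkhoff regularity of $\log\det(Tf|_{E^u})$ or from Theorem~\ref{main.newcriterion}, and without it case~(ii) of the theorem cannot be established. Only after intersecting with $\G^{cu}(f)$ can one conclude (Lemma~\ref{l.Gcu} in the paper) that the positive-center ergodic components are Gibbs $cu$-states.

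The second gap is that the per-limit to per-point step is the technically hardest part of the proof and you only gesture at it. In the paper this is Lemma~\ref{l.removenegative}, proven in the Appendix via a covering argument: near a typical point of a negative-center-exponent Gibbs $u$-state one builds a Pesin cylinder using \emph{absolute continuity of the $C^{1+}$ Pesin stable lamination}, then covers a Lebesgue-density portion of the set of bad points $I^*$ by disjoint preimages $f^{-n_i}(D_{b_i})$ of unstable disks, with careful control of ``bad annular neighborhoods'' to avoid double counting, and derives a contradiction with Borel--Cantelli type summability. Your remark that one should ``sidestep'' the non-absolute-continuity of the stable lamination is misdirected: that obstruction applies to $C^1$ systems, but Theorem~\ref{main.center1d} is stated for $C^{1+}$, and the paper's argument uses stable absolute continuity in an essential way. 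There is also a subtlety you skip: a single Cesaro limit $\mu$ at $x$ need not be ergodic, and its ergodic decomposition can mix measures of different types; the argument must rule out that a positive-volume set of $x$ has \emph{some} Cesaro limit with a nontrivial $\Gibb^{u,-}$ or $\Gibb^{u,+}$-component without $x$ being in the corresponding basin. Countability of the physical measures alone does not resolve this; one needs the density-point and cylinder/covering argument.
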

\begin{remark}\label{rk.uniquevanishing}
The previous theorem shows that, at least for $C^{1+}$ partially hyperbolic diffeomorphism,
the main difficulty to prove the existence of a physical measure is the existence of
Gibbs $u$-states with vanishing center exponent. As an immediate corollary, we get:
\end{remark}
\begin{maincor}\label{maincor.uniquevanishing}
Let $f$ be a $C^{1+}$ partially hyperbolic diffeomorphism with one dimensional center. If $f$ has at most one ergodic Gibbs $u$-state with vanishing center exponent, then it admits physical measures, whose basins have full volume.
\end{maincor}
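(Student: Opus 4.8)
The plan is to deduce Corollary~\ref{maincor.uniquevanishing} from Theorem~\ref{main.center1d} by showing that the ``bad'' alternative (iii) forces the Cesaro limits of a full-volume set of points to land in the basin of a single physical measure, once we know there is at most one ergodic Gibbs $u$-state with vanishing center exponent. First I would apply Theorem~\ref{main.center1d} to obtain the full volume set $\Gamma$, partitioned according to alternatives (i), (ii), (iii). For points satisfying (i) or (ii) there is nothing to prove: these points already lie in the basin of a physical measure. So the work is entirely about the set $\Gamma_0\subset\Gamma$ of points for which alternative (iii) holds, i.e.\ every Cesaro limit lies in $\Gibb^{u,0}$.

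The key observation is that, under the hypothesis, there is at most one ergodic Gibbs $u$-state with zero center exponent; call it $\mu^0$ if it exists. By the definition of $\Gibb^{u,0}$ recalled in the excerpt (``almost every ergodic component of a Gibbs $u$-state is again a Gibbs $u$-state'' and $\Gibb^{u,0}$ consists of invariant measures whose ergodic decomposition is supported on ergodic Gibbs $u$-states with zero center exponent), any measure in $\Gibb^{u,0}$ has its ergodic decomposition supported on $\{\mu^0\}$, hence equals $\mu^0$ itself. In particular $\Gibb^{u,0}\subset\{\mu^0\}$ is a single point (if nonempty). Therefore for every $x\in\Gamma_0$ the only possible Cesaro limit is $\mu^0$, which means $\frac1n\sum_{i=0}^{n-1}\delta_{f^i(x)}\to\mu^0$; that is, $x\in Basin(\mu^0)$. (If $\Gibb^{u,0}=\emptyset$ then $\Gamma_0=\emptyset$ and the corollary follows immediately from (i)--(ii).)

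Putting the pieces together: $\Gamma$ is a full volume set, and each $x\in\Gamma$ belongs to $Basin(\mu^-)$ for some ergodic Gibbs $u$-state $\mu^-$ with negative center exponent (finitely many such by compactness/standard arguments, or we may simply note $\Gamma$ is covered by the basins), or to $Basin(\mu^+)$ for some physical measure $\mu^+$ as in (ii), or to $Basin(\mu^0)$. Hence $\Gamma$ is contained in the union of basins of physical measures, all of which therefore have basins whose union has full volume; and in particular each such measure is a genuine physical measure since its basin has positive volume (being a full-volume-in-$\Gamma$ piece, or at least positive volume). This is exactly the conclusion.

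The only point that needs a little care — and the main (very minor) obstacle — is the bookkeeping in alternatives (i) and (ii): a priori different points could lie in basins of different physical measures, so one should remark that in dimension-one center the number of ergodic Gibbs $u$-states with nonzero center exponent supporting a physical measure is finite (this is standard: negative-exponent ergodic Gibbs $u$-states are physical and there are finitely many, and likewise the Gibbs $cu$-states). Thus the full volume set $\Gamma$ is covered by basins of finitely many physical measures together with $Basin(\mu^0)$, giving ``physical measures whose basins have full volume.'' All of this is routine once Theorem~\ref{main.center1d} is in hand; the corollary is essentially an immediate consequence together with the uniqueness hypothesis collapsing $\Gibb^{u,0}$ to a single ergodic measure.
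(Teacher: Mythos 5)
Your proof is correct and takes essentially the same route as the paper: both apply Theorem~\ref{main.center1d} and then observe that when there is at most one ergodic Gibbs $u$-state $\mu^0$ with vanishing center exponent, the set $\Gibb^{u,0}$ collapses to $\{\mu^0\}$ (or is empty), forcing every Cesaro limit of a point in case (iii) to be $\mu^0$, so that the case (iii) set is contained in $Basin(\mu^0)$. The finiteness bookkeeping you raise at the end is unnecessary for this corollary (it is the subject of Corollary~\ref{maincor.finitephysicalmeasure}, not this one), since the claim only concerns the union of all basins of physical measures having full volume, which follows directly once each $x\in\Gamma$ is placed in some basin.
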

As a simple example, let us take an linear Anosov diffeomorphism $A$ on the torus $\TT^2$.  Given  $p\in\TT^2$ that is a  fixed point of $A$, we take a family of $C^2$ diffeomorphisms  $\{h_x:S^1\to S^1\}_{x\in\TT^2}$, such that
$$
h_p(\theta) =\theta+\alpha(\theta),
$$
with $\alpha(\theta)$ satisfying  (when considered as a function on [0,1])
$$
\alpha(0)=\alpha(1)=0,\alpha(\theta)>0 \text{ for }\alpha\neq 0 \text{ or }1.
$$
In other words, $h_p$ rotate every point counterclockwise except $0$, where $0$ is the unique fixed point of $h_p$. We also assume that $h_x(\theta)$ is $C^2$ in  $x$.

Then we consider the $C^2$ diffeomorphism $f:\TT^2\times S^1\to\TT^2\times S^1$ given by the skew product:
$$
f(x,\theta)=(A(x), h_x(\theta)).
$$ 
We can take $h_x$ such that $f$ is partially hyperbolic with one dimensional center. By the invariance principle (see~\cite{VY1}), every ergodic Gibbs $u$-state $\mu$ of $f$ with vanishing center exponent must be $su$-invariant, which means that $\mu$ has a family of continuous conditional measures along center leaves, which is invariant under the $su$-holonomy map. Moreover, the conditional measure of $\mu$ along the center leaf $\{p\}\times S^1$ consists of exactly one point, namely $(p,0)$. By the $su$-invariance, on every center leaf, the conditional measure of $\mu$ is supported at exactly one point. This shows that the support of $\mu$ is an invariant torus of $f$. It then follows that $\Gibb^{u,0}(f)$ consists of a unique measure, which can be seen as the lift of the Lebesgue measure  on $\TT^2$ to the invariant torus. By Corollary~\ref{maincor.uniquevanishing}, $f$ has physical measures, whose basin covers a set with full volume.
 
We finish this section with the following Corollary:

\begin{maincor}\label{maincor.finitephysicalmeasure}
Let $f\in PH_1$ be a $C^{1+}$ partially hyperbolic diffeomorphism. If every ergodic Gibbs $u$-state of $f$ has non-vanishing center exponent, then $f$ has finitely many physical measures, whose basin covers a full volume set.
\end{maincor}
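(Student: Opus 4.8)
The plan is to reduce the Corollary to a finiteness statement about ergodic Gibbs $u$-states and to deduce everything else from Theorem~\ref{main.center1d}. First apply that theorem to obtain a full volume set $\Gamma$ with the trichotomy (i)--(iii) at every $x\in\Gamma$. By hypothesis $f$ has no ergodic Gibbs $u$-state with vanishing center exponent, and $\Gibb^{u,0}(f)$ is by definition carried by such measures, so $\Gibb^{u,0}(f)=\emptyset$ and alternative (iii) never occurs. Hence every $x\in\Gamma$ lies in the basin of an ergodic Gibbs $u$-state with non-vanishing center exponent, of one of the two types $\mu^-$ (negative center exponent) or $\mu^+$ (positive center exponent, and moreover a Gibbs $cu$-state), and the union of these a priori countably many basins has full volume. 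Now distinct physical measures have disjoint basins, since a point that equidistributes does so to a unique measure; consequently no physical measure other than these $\mu^\pm$ can exist, as its basin would be a positive volume subset of $M\setminus\Gamma$. Thus the physical measures of $f$ are exactly the $\mu^-$'s and $\mu^+$'s, their basins cover a full volume set, and everything will follow once we show this family is finite. Since each $\mu^\pm$ is in particular an ergodic Gibbs $u$-state with non-vanishing center exponent, it suffices to prove that $f$ has only finitely many ergodic Gibbs $u$-states with non-vanishing center exponent.

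I would prove this last claim by contradiction. Suppose there are infinitely many pairwise distinct ergodic Gibbs $u$-states $\mu_n$ with $\lambda^c(\mu_n):=\int\log\|Tf\mid_{E^c}\|\,d\mu_n\neq 0$. The space of Gibbs $u$-states is weak$^*$ compact (see~\cite{BV00}), so, passing to a subsequence, $\mu_n\to\nu$ weak$^*$ with $\nu$ again a Gibbs $u$-state; and since $\dim E^c=1$, the function $x\mapsto\log\|Tf\mid_{E^c(x)}\|$ is continuous, whence $\lambda^c(\mu_n)\to\lambda^c(\nu)$. The crux is to extract from this accumulating family an ergodic Gibbs $u$-state with \emph{zero} center exponent, contradicting the hypothesis. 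To do this I would disintegrate the $\mu_n$ and $\nu$ along the one-dimensional center leaves and follow the relative positions of their center conditional measures: the expectation is that distinct ergodic Gibbs $u$-states accumulating along a common center leaf are forced to create a neutral piece there, in the same way that a $C^1$ circle diffeomorphism with infinitely many periodic orbits must, by the mean value theorem, possess a neutral one. The invariance principle of~\cite{VY1} should be the right instrument both to control how the center conditionals vary as $\mu_n\to\nu$ and to recognize the limiting neutral piece as the support of an ergodic Gibbs $u$-state with vanishing center exponent. Granting finiteness, the Corollary is immediate: $f$ has finitely many physical measures, namely the $\mu^\pm$, and by the first paragraph their basins cover a full volume set.

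I expect this middle step to be the main obstacle. Mere weak$^*$ convergence $\mu_n\to\nu$ only gives $\lambda^c(\nu)=\lim_n\lambda^c(\mu_n)$, a number that need be neither $0$ nor possess a zero-exponent ergodic component, so one cannot simply pass to the limit measure; the real content is geometric — controlling the center conditional measures along a converging sequence of \emph{distinct} ergodic Gibbs $u$-states — and it is precisely here that the $C^{1+}$ hypothesis (through the absolute continuity of $\cF^u$ and of the Pesin center-stable lamination) and the one-dimensionality of the center both become essential. For the sub-family of the $\mu^-$'s the finiteness arguments for systems with mostly contracting center in~\cite{DVY} provide a blueprint that should lighten the work, and the $\mu^+$'s can be treated symmetrically.
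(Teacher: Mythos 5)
Your first paragraph is a correct reduction and matches the structure of the paper's argument: eliminating alternative~(iii) of Theorem~\ref{main.center1d} via $\Gibb^{u,0}_e=\emptyset$, noting basins are disjoint, and concluding that the Corollary follows once finiteness of the relevant ergodic Gibbs $u$-states is established. (A minor technical point: it in fact suffices to bound the set $\Gibb^{u,-}_e$ together with the positive-exponent Gibbs $u$-states that are additionally Gibbs $cu$-states; you do not need finiteness of \emph{all} positive-exponent ergodic Gibbs $u$-states.)

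The genuine gap is in your proposed proof of finiteness, and you half-acknowledge it yourself. Your plan is to take a weak$^*$ accumulation point $\nu$ of infinitely many distinct $\mu_n$ and then ``extract a zero-exponent ergodic Gibbs $u$-state'' from the family by tracking center conditionals via the invariance principle. This is not the right contradiction to aim for: there is no reason for $\nu$ to acquire a zero-exponent ergodic component, and the invariance principle of~\cite{VY1} only applies once one already has a vanishing center exponent — it cannot be used to manufacture one. In fact the hypothesis $\Gibb^{u,0}_e=\emptyset$ points the argument in the opposite direction: it forces the limit $\nu$ to possess an ergodic component $\mu^-_1$ with strictly \emph{negative} center exponent (in the case of accumulating $\mu^-_n$'s), and the contradiction is then obtained from Pesin theory, not from producing a neutral state. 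Concretely, one picks an unstable disk inside $\supp\mu^-_1$ on which a positive-measure Pesin block has uniform-size stable manifolds; since these stable manifolds are absolutely continuous (the $C^{1+}$ hypothesis enters exactly here), they project with bounded Jacobian onto nearby local unstable leaves $\cF^u_{\loc}(x_{n_k})$ of $\mu^-_{n_k}$-typical points $x_{n_k}$, and thus meet $\cF^u_{\loc}(x_{n_k})$ in a positive $\Leb^u$-measure set. That intersection consists of points that are simultaneously $\mu^-_{n_k}$-typical and lie in $Basin(\mu^-_1)$, which is impossible for $\mu^-_{n_k}\neq\mu^-_1$. A parallel argument, using $\G(f)=\G^u(f)\cap\G^{cu}(f)$, Remark~\ref{rk.ballbasin}, and absolute continuity of $\cF^{cu}$-Pesin manifolds, handles accumulating positive-exponent Gibbs $cu$-states. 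Your invariance-principle/circle-diffeomorphism heuristic, by contrast, is an attempt to prove a different (and false without further hypotheses) statement, namely that accumulation of distinct ergodic Gibbs $u$-states forces a neutral one, so it does not close the argument.
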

 
This paper is organized in the following way: Section~\ref{s.preliminary} contains the background on foliation charts and the partial entropy along expanding foliations. In Section~\ref{s.structure}, we show that for every $C^1$ partially hyperbolic diffeomorphism $f$, the set $\G^u(f)$ is non-empty, compact and convex. Furthermore, it varies upper semi-continuously  with $f$ in the $C^1$ topology. The main tool there is the upper semi-continuity of the partial entropy along $\cF^u$, proved in~\cite{Y16}.

Section~\ref{s.proof} consists of the proof of Theorem~\ref{main.newcriterion}. In particular, we show that the measure of points whose Cesaro average is away from $\G^u(f)$ has exponentially small measure. The proof uses an argument similar to the proof of the variational principle.  In Section~\ref{s.uniform} we make the observation that the argument in~\ref{s.proof} can be made uniform. 

In Section~\ref{s.mostlycontracting}, we prove Theorem~\ref{main.mostlycontracting} using a topological structure of  partially hyperbolic diffeomorphism known as the skeleton. This is previously used in~\cite{DVY} to study the Gibbs $u$-states for $C^{1+}$ mostly contracting diffeomorphisms, and is generalized here to the $C^1$ case. Finally, we prove Theorem~\ref{main.center1d}, Corollary~\ref{maincor.uniquevanishing} and~\ref{maincor.finitephysicalmeasure} in Section~\ref{s.1d} by introducing a set $\G^{cu}(f)$ which is defined similar to $\G^u(f)$ using the Pesin's formula. 

While preparing this paper, we are kindly notified by C. Crovisier, D. Yang and J. Zhang that a similar project~\cite{CYZ} is being developed.  In particular,~\cite[Theorem A and C]{CYZ} are similar to the Theorem~\ref{main.newcriterion} and~\ref{main.center1d} here. We would also  like to point out that the techniques used in~\cite{CYZ} are different from those in this paper.

\section{Preliminary}\label{s.preliminary}
In this section, we introduce some necessary backgrounds on the partial entropy, which is the main tool in this paper.

\subsection{Foliation charts\label{ss.charts}}
Let $f$  be a partially hyperbolic diffeomorphism over a $d$-dimensional Riemannian manifold $M$, and $\cF^u$ be its unstable foliation with dimension $l$.
In this subsection we explain the foliation charts for the unstable foliation.

A \emph{$\cF^u$-foliation
box} is the image $B$ of a topological embedding $\Phi : D^{d-l}\times D^{l}\to M$
such that every plaque $P_x = \Phi(\{x\}\times D^l)$ is contained in a leaf of $\cF^u$,
and every
$$\Phi(x,\cdot) : D^l \to M, y \mapsto \Phi(x,y)$$
is a $\C^1$ embedding that depends continuously on $x$ in the $\C^1$ topology.
We write $D=\Phi(D^{d-l}\times \{0\})$, and denote this foliation
box by ($B, \Phi, D$). We cover the manifold $M$ by a finite covering of foliation boxes
$\{(B_i,\Phi_i,D_i)\}_{i=0}^{k_0}$ and assume that the Lebesgue number of this covering is larger than 1.

For any partition $\cA$ with diameter smaller than the Lebesgue number of the covering by foliation boxes, the intersection of elements in $\cA$ with local unstable plaques  $P_x$ induces a uncountable partition, which we denote by  $\cA^u$.

For a given $n>0$,
we are going to consider the following partitions:
$$\cA_{n}=\bigvee_{i=0}^{n} f^{-i}(\cA).$$
They also induce partitions $(\cA_{n})^u$ which is the intersection of elements of $\cA_n$ with local unstable plaques $P_x$. 

The proof the next lemma is straight forward:

\begin{lemma}\label{l.localskew}
$\cA_{n}^u=\bigvee_{i=0}^{n}\cA^u$.
\end{lemma}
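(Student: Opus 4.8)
The plan is to unwind the definitions on both sides and check that the two induced partitions of a local unstable plaque $P_x$ agree. Fix a foliation box $(B_i,\Phi_i,D_i)$ and a local plaque $P_x$ inside it. By definition, $(\cA_n)^u$ is the partition of $P_x$ whose atoms are the sets $Q\cap P_x$ with $Q$ an atom of $\cA_n=\bigvee_{i=0}^n f^{-i}(\cA)$. Such a $Q$ has the form $Q=\bigcap_{i=0}^n f^{-i}(A_i)$ for atoms $A_i\in\cA$, so $Q\cap P_x=\bigcap_{i=0}^n\bigl(f^{-i}(A_i)\cap P_x\bigr)$. Thus the first task is simply to identify $f^{-i}(A_i)\cap P_x$ with an atom of some partition of $P_x$ that only sees the $i$-th coordinate; this is where the unstable foliation being $f$-invariant enters, since $f^{i}$ maps the plaque $P_x$ into a single unstable leaf (though not necessarily into one plaque of the covering).

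The notational subtlety is that the symbol $\bigvee_{i=0}^n\cA^u$ on the right-hand side must be interpreted as $\bigvee_{i=0}^n f^{-i}(\cA^u)$ — i.e., each $\cA^u$ in that join is really the pullback partition $f^{-i}(\cA^u)$, restricted to $P_x$ — otherwise the statement is vacuous. Reading it this way, an atom of $f^{-i}(\cA^u)$ restricted to $P_x$ is, by definition of $\cA^u$, a set of the form $f^{-i}\bigl(A\cap P_y\bigr)\cap P_x$ where $P_y$ is the plaque of the covering containing $f^i(z)$ for the relevant points $z$. The key step is therefore to verify the identity
\[
f^{-i}(A)\cap P_x \;=\; f^{-i}\bigl(A\cap P_{f^i(x)}^{\loc}\bigr)\cap P_x,
\]
i.e., that intersecting the plaque $P_x$ with $f^{-i}(A)$ is the same as first intersecting with the full preimage under $f^i$ of the local plaque structure. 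This holds because $f^i(P_x)$ lies in a single unstable leaf, so $f^i$ restricted to $P_x$ lands in (a possibly large piece of) that leaf, which is covered by plaques; intersecting with $A$ and then with the plaque through $f^i(x)$, or intersecting in the other order, produces the same subset of the leaf. Iterating this over $i=0,\dots,n$ and taking the common refinement gives the equality of the two partitions of $P_x$ atom by atom.

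The main obstacle — and the reason the paper calls the proof ``straight forward'' rather than trivial — is bookkeeping: an unstable plaque $P_x$ may, under $f^i$, be stretched across several foliation boxes, so one must be careful that the induced partition $\cA^u$ is well defined independently of which box one uses (this is guaranteed by the Lebesgue number of the covering being $>1$ and the diameter of $\cA$ being smaller than it, so each plaque, and each of its not-too-large forward images intersected with an atom of $\cA$, sits inside some box). Once one observes that the restriction of $f^{-i}(\cA)$ to $P_x$ only depends on the trace of $\cA$ along the unstable leaf containing $f^i(P_x)$ — which is exactly $f^{-i}(\cA^u)$ — the two joins coincide. I would present this as: restrict everything to a single plaque $P_x$, use invariance of $\cF^u$ under $f$ to commute $f^{-i}$ past the operation ``intersect with a plaque,'' and conclude by the general fact that refinement commutes with restriction to a subset.
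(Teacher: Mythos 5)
Your reading of the statement is correct: as written, $\bigvee_{i=0}^{n}\cA^u$ is vacuously $\cA^u$, and it must be read as $\bigvee_{i=0}^{n} f^{-i}(\cA^u)$. Your overall strategy — compare atom by atom on a fixed plaque $P_x$ and use $f$-invariance of $\cF^u$ — is the right one, and one inclusion is indeed immediate: since $f^{-i}(\cA^u)$ refines $f^{-i}(\cA)$ and $\cA^u$ already refines by plaques, $\bigvee_{i=0}^{n} f^{-i}(\cA^u)$ always refines $\cA_n^u$, i.e.\ $\cA_n^u \prec \bigvee_{i=0}^{n} f^{-i}(\cA^u)$. The content of the lemma is the reverse refinement, and that is precisely the ``key step'' you isolate. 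The paper itself gives no argument (it only declares the proof ``straight forward''), so I can only assess yours on its own merits.

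The problem is that your justification of the key identity is circular. Unwinding, the reverse inclusion amounts to showing that for the atom $R=\cA_n(z)\cap P_z$ of $\cA_n^u$ one has $f^i(R)\subset P_{f^i(z)}$ for all $0\le i\le n$, where $P_{f^i(z)}$ is the plaque of the foliation box containing $\cA(f^i(z))$. You assert this ``holds because $f^i(P_x)$ lies in a single unstable leaf,'' but lying in a single leaf is strictly weaker than lying in a single plaque: a leaf can exit a foliation box and re-enter it on a different plaque, and even the small set $f^i(R)$ (it is contained in the single atom $\cA(f^i(z))$, hence has diameter $<\diam(\cA)$) could in principle meet the box on two plaques if the leaf ``folds back.'' Smallness of diameter in $M$ alone does not rule this out. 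What would rule it out is \emph{connectedness}: if $R$ is connected then $f^i(R)$ is a connected subset of a leaf lying inside a single foliation box and containing $f^i(z)$; since each plaque is open in the leaf, a connected piece of leaf inside a box is contained in a single plaque, namely the one through $f^i(z)$. But the atoms of $\cA$ (and hence of $\cA_n$ and of $\cA_n^u$) are not assumed connected in the paper — $\cA$ is only assumed to be a finite partition of small diameter — so this is a genuine hypothesis that needs to be supplied or a replacement argument given. The other concern you raise (plaques being stretched across several boxes under $f^i$) is actually the innocuous one, since the smallness of $\cA$ keeps $f^i(R)$ inside the single box containing $\cA(f^i(z))$; the real issue is the re-entry/multiple-plaque possibility, and that is what your sketch does not address.
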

The foliation boxes induce two sequence of finite partitions $\cA_{n,m}$ and $\cA_{0,m}$ such that:
\begin{itemize}
\item $\cA_{n}=\cA_{n,0}\prec\cA_{n,1}\prec \cdots$ and $\cA=\cA_{0,0}\prec\cA_{0,1}\prec\cdots$

\item $\cA_{n}\prec \cA_{n,m}\prec \cA_{n}^u$ and $\cA\prec \cA_{0,m}\prec \cA^u$;

\item $\bigvee_{m} \cA_{n,m}=\cA_{n}^u$ and $\bigvee_{m} \cA_{0,m}=\cA^u$.
\end{itemize}
More precisely, define a sequence of partition $\overline{\cC}_m$ on $\cup_i D_i$ such that
$$\overline{\cC}_1\prec \cdots \prec \overline{\cC}_m\prec \cdots$$
with diameter of $\overline{\cC}_m$ converges to zero. For every element $\overline{C}$ of the partition $\overline{\cC}_m$, we may consider
$C=\cup_{x\in \overline{C}}\cF^u_{loc}(x)$.
For every element $P$ of $\cA$, fix an $i=i(P)$ such that $P$ is contains in the foliation box $(B_i,\Phi_i,D_i)$. Then
the $C\cap P$ induces an element of the partition $\cA_{0,m}$. Moreover, for every element $Q\in \cA_{n}$, the same construction yields a partition $\cA_{n,m}$, which refines both $\cA_{0,m}$ and $\cA_n$. Indeed, we have:
\begin{equation}\label{eq.oneside}
\cA_{n} \bigvee \cA_{0,m}=\cA_{n,m}.
\end{equation}

\subsection{Measurable partitions and mean conditional entropy}
Let $\cB$ be the Borel $\sigma$-algebra on $M$. In this subsection, we recall
the properties of a measurable partition. For more details, see~\cite{Rok49, Rok67}.

\begin{definition}\label{d.measurablepartition}

A partition $\xi$ of $M$ is called \emph{measurable} if there is a sequence of finite partitions
$\xi_n$ $_{n\in \mathbb{N}}$ such that:
\begin{itemize}
\item elements of $\xi_n$ are measurable; 

\item $\xi=\vee_{n}\xi_n$, that is, $\xi$ is the coarsest partition which refines $\xi_n$ for each $n$.

\end{itemize}
\end{definition}

For a partition $\xi$ and $x\in M$, we denote by $\xi(x)$ the element of $\xi$ which contains $x$.
Given any measurable partition $\xi$ and a probability measure $\mu$, we may define the conditional measures of $\mu$ on almost every element.

\begin{proposition}\label{p.conditionalmeasure}
Let $\xi$ be a measurable partition. Then there is a full $\mu$-measure subset $\Gamma$
such that for every $x\in \Gamma$, there is a probability measure $\mu_x^{\xi}$ defined
on $\xi(x)$, satisfying:
\begin{itemize}
\item Let $\cB_\xi$ be the sub-$\sigma$-algebra of $\cB$ which consists of unions of
      elements of $\xi$, then for any measurable set $A$, the function $x\to \mu^\xi_x(A)$ is
      $\cB_\xi$-measurable.

\item Moreover, we have
\begin{equation}\label{eq.conditional}
\mu(A)=\int \mu^\xi_x(A)d\mu(x).
\end{equation}
for every $\cB$  measurable  set $A$.
\end{itemize}
\end{proposition}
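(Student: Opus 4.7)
The plan is to reproduce Rokhlin's classical disintegration theorem, exploiting the approximation of $\xi$ by the finite partitions $\xi_n$ from Definition~\ref{d.measurablepartition}. Since $M$ is a compact Riemannian manifold, $(M,\cB,\mu)$ is a standard probability space, and this is the only feature of $M$ that would be used.

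First I would fix a countable algebra $\cR\subset\cB$ that generates $\cB$; such an algebra exists because $M$ is second countable. For each $E\in\cR$, consider the sequence of conditional expectations
$$g_n^E(x)=\mathbb{E}(\chi_E\mid\cB_{\xi_n})(x)=\frac{\mu(E\cap\xi_n(x))}{\mu(\xi_n(x))},$$
defined on atoms of positive $\mu$-measure. Relative to the filtration $\{\cB_{\xi_n}\}$, the family $(g_n^E)$ is a bounded martingale, so by Doob's martingale convergence theorem it converges $\mu$-almost everywhere to a $\cB_\xi$-measurable function $g_\infty^E$, which is a version of $\mathbb{E}(\chi_E\mid\cB_\xi)$. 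Intersecting the resulting full-measure sets over the countable family $\cR$ yields a single full-measure set $\Gamma_0$ on which all the limits exist simultaneously.

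On $\Gamma_0$, the set function $E\mapsto g_\infty^E(x)$ inherits finite additivity, monotonicity, and the normalization $g_\infty^M(x)=1$ from the martingale. To promote this to countable additivity I would discard a further null set on which a countable collection of monotone-convergence identities in $L^1(\mu)$ fails. Carath\'eodory's extension theorem then produces, for each $x$ in a full-measure subset $\Gamma\subset\Gamma_0$, a unique Borel probability $\mu_x^\xi$ on $M$ extending $g_\infty^{\,\cdot}(x)$ from $\cR$ to $\cB$. Measurability of $x\mapsto\mu_x^\xi(A)$ in $A\in\cB$ is established first on $\cR$ and then extended via the monotone class theorem, and the identity~\eqref{eq.conditional} is the defining property of the conditional expectation applied to $\chi_A$.

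The final step is to show $\mu_x^\xi$ is concentrated on $\xi(x)$ for $\mu$-a.e.\ $x$. For each $n$, $\chi_{\xi_n(x)}$ is $\cB_{\xi_n}$-measurable, hence equal to its own conditional expectation, so $\mu_x^\xi(\xi_n(x))=1$ on a full-measure set; since $\xi(x)=\bigcap_n \xi_n(x)$, intersecting over $n$ gives the required concentration. The main obstacle throughout is the transition from countable additivity on the algebra $\cR$ to a genuine Borel measure, coordinated so that only countably many null sets are ever removed. This is precisely the point where the standard-space structure of $(M,\cB,\mu)$ is essential, and it is what makes the conclusion fail in non-standard settings.
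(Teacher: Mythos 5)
The paper does not prove this proposition; it merely recalls it from Rokhlin's work (the subsection opens with ``we recall the properties of a measurable partition. For more details, see \cite{Rok49, Rok67}''). So your proposal is being judged as an independent reconstruction of Rokhlin's disintegration theorem, and the architecture you chose -- martingale convergence along the filtration $\{\cB_{\xi_n}\}$, followed by an extension from a countable generating algebra -- is the standard one and is sound in outline.

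There is, however, a real gap at the countable-additivity step. After Doob's theorem you have, for $\mu$-a.e.\ $x$, a finitely additive, normalized set function $E\mapsto g_\infty^E(x)$ on the countable algebra $\cR$. To invoke Carath\'eodory you need this set function to be countably additive on $\cR$, which is equivalent to continuity at $\emptyset$ along decreasing sequences $E_k\downarrow\emptyset$ in $\cR$. But there are \emph{uncountably} many such sequences, so ``discarding a further null set on which a countable collection of monotone-convergence identities fails'' does not cover all of them; removing one null set per sequence is not an option. The standard repair is exactly the place where the topology of $M$ (rather than just its Borel structure) enters: one arranges $\cR$ to contain a compact approximating class $\cK$ (e.g.\ finite unions of closed balls from a countable basis) and shows that, after discarding a single countable family of null sets chosen via a Borel--Cantelli/diagonal argument from the $L^1$-approximations $\mu(E\setminus K)\to 0$, each $g_\infty^{\,\cdot}(x)$ is inner regular with respect to $\cK$, and hence countably additive. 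Alternatively -- and this is the cleaner route in a compact metric space -- run the martingale argument on a countable dense family $\{f_k\}\subset C(M)$ instead of on indicators: $g_n^k=\mathbb{E}(f_k\mid\cB_{\xi_n})$ converges a.e., the limit is automatically a positive linear functional of norm $1$ on $\overline{\operatorname{span}}\{f_k\}=C(M)$, and Riesz representation delivers a genuine Borel probability $\mu_x^\xi$ with no additivity issue at all. The remainder of your argument (measurability of $x\mapsto\mu_x^\xi(A)$, the identity \eqref{eq.conditional}, and concentration on $\xi(x)=\bigcap_n\xi_n(x)$ via $\mathbb{E}(\chi_A\mid\cB_\xi)=\chi_A$ for each of the countably many atoms $A$ of the finite partitions $\xi_n$) is correct once the measures exist.
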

\begin{remark}
Let $\pi_\xi$ be the projection $M\to M/\xi$, and $\mu_\xi$ be the projection of measure $\mu$ onto
$M/\xi$ by the map $\pi_\xi$. Then equation~\eqref{eq.conditional} can be written as:
\begin{equation}\label{eq.projective}
\mu(A)=\int \mu^\xi_B(A)d\mu_\xi(B)
\end{equation}
where $B$ denotes the element of $\xi$ and $\mu^\xi_B$ the conditional measure on $B$.
\end{remark}

Let $\xi$ be a measurable partition and $C_1,C_2,\dots$ be the elements of $\xi$ with positive $\mu$ measure.
We define the \emph{entropy of the partition} by
\begin{equation}
H_\mu(\xi) = \begin{cases} \sum_k \phi(\mu(C_k)), & \mbox{if } \mu(M\setminus \cup_k C_k)=0 \\
\infty, & \mbox{if } \mu(M\setminus \cup_k C_k)>0 \end{cases}
\end{equation}
where $\phi: \mathbb{R}^+\to \mathbb{R}$ is defined by $\phi(x)=-x\log x$.

If $\xi$ and $\eta$ are two measurable partitions, then for every element $B$ of $\eta$, $\xi$
induces a partition $\xi_B$ on $B$. We define the \emph{mean conditional
entropy of $\xi$ respect to $\eta$}, denoted by $H(\xi\mid\eta)$, as the following:
\begin{equation}\label{eq.conditionalentropy}
H_\mu(\xi\mid\eta)=\int_{M/\eta} H_{\mu^\eta_B}(\xi_B) d\mu_\eta(B).
\end{equation}
\begin{definition}
For measurable partitions $\{\zeta_{n}\}_{n=1}^\infty$ and $\zeta$, we write $\zeta_n\nearrow \zeta$ if
the following conditions are satisfied:
\begin{itemize}
\item $\zeta_1<\zeta_2<\dots$;
\item $\vee_{n=1}^\infty \zeta_n=\zeta$.
\end{itemize}
\end{definition}

\begin{lemma}\label{l.increasingpartition}\cite[Subsection 5.11]{Rok67}
Suppose $\{\eta_n\}_{n=1}^\infty$, $\eta$ and $\xi$ are measurable partitions, such that $\eta_n\nearrow \eta$
and $H_\mu(\xi\mid \eta_1)<\infty$, then
$$H_\mu(\xi\mid \eta_n)\searrow H_\mu(\xi \mid \eta).$$
\end{lemma}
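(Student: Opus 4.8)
This is a classical result of Rokhlin, and since the excerpt cites \cite[Subsection 5.11]{Rok67} for it, the intended "proof" is presumably just that citation. Nevertheless, here is how I would reconstruct the argument from scratch. The plan is to reduce the statement to a monotone convergence statement for conditional expectations and then invoke the martingale convergence theorem. First I would note that $H_\mu(\xi\mid\eta_n)$ is non-increasing in $n$: since $\eta_n < \eta_{n+1}$, any element $B$ of $\eta_{n+1}$ is contained in an element $B'$ of $\eta_n$, and by concavity of $\phi$ together with the formula \eqref{eq.conditionalentropy} one gets $H_\mu(\xi\mid\eta_{n+1})\le H_\mu(\xi\mid\eta_n)$; the hypothesis $H_\mu(\xi\mid\eta_1)<\infty$ guarantees the whole sequence is finite, so the limit $L=\lim_n H_\mu(\xi\mid\eta_n)$ exists and $L\ge H_\mu(\xi\mid\eta)$ by the same monotonicity (since $\eta_n<\eta$). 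The content is the reverse inequality $L\le H_\mu(\xi\mid\eta)$.

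For that, I would first reduce to the case where $\xi$ is a finite partition. Write $\xi=\vee_k \xi^{(k)}$ with $\xi^{(k)}$ finite and increasing; one has $H_\mu(\xi^{(k)}\mid\eta_n)\nearrow H_\mu(\xi\mid\eta_n)$ as $k\to\infty$ (this is the corresponding increasing-partition statement in the first argument, which is more elementary), and an interchange-of-limits argument — justified because all quantities are monotone in both $k$ and $n$ — reduces the claim to: for finite $\xi$, $H_\mu(\xi\mid\eta_n)\to H_\mu(\xi\mid\eta)$. Now with $\xi=\{A_1,\dots,A_r\}$ finite, the pointwise information function is
$$
I_\mu(\xi\mid\eta_n)(x) = -\sum_{j=1}^r \mathbf 1_{A_j}(x)\,\log \mu^{\eta_n}_{\eta_n(x)}(A_j),
$$
and $H_\mu(\xi\mid\eta_n)=\int I_\mu(\xi\mid\eta_n)\,d\mu$. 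The key point is that $x\mapsto \mu^{\eta_n}_{\eta_n(x)}(A_j)$ is a version of the conditional expectation $\mathbb E_\mu(\mathbf 1_{A_j}\mid\cB_{\eta_n})$, and since $\eta_n\nearrow\eta$ we have $\cB_{\eta_n}\nearrow\cB_\eta$ (up to $\mu$-null sets), so the increasing-martingale convergence theorem gives $\mathbb E_\mu(\mathbf 1_{A_j}\mid\cB_{\eta_n})\to \mathbb E_\mu(\mathbf 1_{A_j}\mid\cB_\eta)=\mu^\eta_{\eta(x)}(A_j)$ for $\mu$-a.e.\ $x$ and in $L^1$. Hence $I_\mu(\xi\mid\eta_n)\to I_\mu(\xi\mid\eta)$ a.e.

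The main obstacle is upgrading this a.e.\ convergence of the integrands to convergence of the integrals, i.e.\ getting a dominated-convergence bound. This is handled by Doob's maximal inequality: setting $g_j^*(x)=\sup_n \mathbf 1_{A_j}(x)\big(-\log\mathbb E_\mu(\mathbf 1_{A_j}\mid\cB_{\eta_n})(x)\big)$, one shows $g_j^*\in L^1(\mu)$ — the standard Chung–Neveu estimate shows $\int g_j^*\,d\mu \le \mu(A_j) + H_\mu(\xi^{(j)}\mid\eta_1)$-type bound, finite by hypothesis $H_\mu(\xi\mid\eta_1)<\infty$. Then $0\le I_\mu(\xi\mid\eta_n)\le \sum_j g_j^*\in L^1$, so dominated convergence yields $H_\mu(\xi\mid\eta_n)\to H_\mu(\xi\mid\eta)$, completing the proof. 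Combined with the monotonicity established at the outset, this gives $H_\mu(\xi\mid\eta_n)\searrow H_\mu(\xi\mid\eta)$ as claimed.
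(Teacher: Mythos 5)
The paper gives no proof of this lemma: it simply cites \cite[Subsection 5.11]{Rok67}, so there is no in-paper argument to compare against. Your reconstruction is the standard modern proof of Rokhlin's continuity result: monotonicity in $n$ from concavity of $\phi$ and the refinement $\eta_n\prec\eta_{n+1}$; reduction to finite $\xi$; identification of $\mu^{\eta_n}_{\eta_n(x)}(A_j)$ with $\mathbb E_\mu(\mathbf 1_{A_j}\mid\cB_{\eta_n})$; increasing martingale convergence for the a.e.\ limit of the information functions; and a Chung--Neveu maximal inequality supplying the dominating $L^1$ function. This is sound and is the argument one finds in, e.g., Einsiedler--Ward or Glasner; whether Rokhlin's own argument in Subsection 5.11 was phrased this way (Rokhlin's treatment tends to be more directly measure-theoretic, via the structure of Lebesgue spaces) is immaterial here.

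One small point worth tightening: the bound you want is not ``$\mu(A_j)+H_\mu(\xi^{(j)}\mid\eta_1)$-type'' per set $A_j$, but rather the global maximal inequality
$$
\int \sup_n I(\xi\mid\eta_n)\,d\mu \;\le\; H_\mu(\xi\mid\eta_1)+1,
$$
obtained by applying the Chung--Neveu inequality \emph{conditionally on} $\cB_{\eta_1}$, which is exactly where the hypothesis $H_\mu(\xi\mid\eta_1)<\infty$ enters. Applying Chung--Neveu without conditioning on $\cB_{\eta_1}$ would give $\int\sup_n I(\xi\mid\eta_n)\,d\mu\le H_\mu(\xi)+1$, and $H_\mu(\xi)$ may be infinite even when $H_\mu(\xi\mid\eta_1)$ is finite; so the conditioning is not a cosmetic refinement but the reason the stated hypothesis suffices. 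With this adjustment your argument is complete.
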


\begin{definition}
Let $\xi$ be a measurable partition, we put
$$h_\mu(f,\xi)=H_\mu(\xi\mid f\xi^+),$$
where $\xi^+=\vee_{n=1}^\infty f^n \xi$.
\end{definition}

\begin{remark}\label{r.increasingpartition}
A measurable partition $\xi$ is said to be \emph{increasing} if $f\xi <\xi$. For
an increasing partition $\xi$,
$$h_\mu(f,\xi)=H_\mu(\xi\mid f\xi).$$
\end{remark}

\subsection{Partial entropy\label{ss.partialentropy}}

Throughout this subsection, $\cF^u$ denotes the unstable foliation of $f$. We are going to give
the precise definition of the partial metric entropy of $\mu$ along
$\cF^u$, which is defined using  a special class of measurable partitions:

\begin{definition}
We say that a measurable partition $\xi$ of $M$ is \emph{$\mu$-subordinate} to the unstable foliation if
for $\mu$-a.e. $x$, we have
\begin{itemize}
\item[(1)] $\xi(x)\subset \cF^u(x)$ and $\xi(x)$ has uniformly small diameter inside $\cF^u(x)$;
\item[(2)] $\xi(x)$ contains an open neighborhood of $x$ inside the leaf $\cF^u(x)$;
\item[(3)] $\xi$ is an increasing partition, meaning that $f\xi \prec \xi$.
\end{itemize}

\end{definition}

Ledrappier, Strelcyn~\cite{LS82} proved that the Pesin unstable lamination admits some $\mu$-subordinate
measurable partition. More precisely, they take a finite partition $\cA$ (indeed induced by a covering consisting of
finitely many balls) with 'small boundary' in the sense that
\begin{equation}\label{eq.smallboundary}
\sum_{i=0}^\infty \mu(B_{\lambda^i}(\partial\cA))<\infty
\end{equation}
for some $\lambda\lesssim 1$, then $\bigvee_{i=0}^\infty f^i(\cA^u)$ is a $\mu-$subordinate partition. (See for instance Yang~\cite{Y16}).

The following result 
is contained in Lemma~3.1.2 of Ledrappier, Young~\cite{LY85a}:

\begin{lemma}\label{l.definitionleafentropy}
$h_\mu(f,\xi_1)=h_\mu(f,\xi_2)$
for any measurable partitions $\xi_1$ and $\xi_2$ that are $\mu$-subordinate to $\cF^u$.
\end{lemma}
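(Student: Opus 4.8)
\textbf{Proof proposal for Lemma~\ref{l.definitionleafentropy}.}

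The plan is to reduce the equality $h_\mu(f,\xi_1)=h_\mu(f,\xi_2)$ to the fact that $h_\mu(f,\xi)$ depends only on the measurable hull of $\xi$ up to sets of measure zero, exploiting that any two $\mu$-subordinate partitions have a common refinement which is again $\mu$-subordinate. First I would recall that the value $h_\mu(f,\xi)=H_\mu(\xi\mid f\xi)$ for an increasing partition (Remark~\ref{r.increasingpartition}) can be rewritten, using the standard Rokhlin formula, as the limit $\lim_{n\to\infty}\frac1n H_\mu\big(\bigvee_{i=0}^{n-1}f^{i}\xi \,\big|\, \xi^{+}\big)$-type expression, or more directly via $h_\mu(f,\xi)=\lim_n H_\mu(f^{-n}\xi \mid \xi^{+})$; the key structural point is that both $\xi_1$ and $\xi_2$ are increasing and subordinate, so $f^n\xi_i \nearrow$ the partition into whole leaves $\cF^u$ (modulo the small-diameter condition, which only affects a single leaf coordinate and not the $f$-orbit of the partition).

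The main step is the following comparison argument. Given $\xi_1,\xi_2$ both $\mu$-subordinate to $\cF^u$, form $\eta=\xi_1\vee\xi_2$. One checks that $\eta$ is again $\mu$-subordinate: property (1) holds since $\eta(x)\subset\xi_1(x)\cap\xi_2(x)\subset\cF^u(x)$ has even smaller diameter; property (3) holds since $f\eta=f\xi_1\vee f\xi_2\prec\xi_1\vee\xi_2=\eta$; property (2) is the delicate one — $\eta(x)=\xi_1(x)\cap\xi_2(x)$ need not contain a leaf-neighborhood of $x$ at the boundary points, but since each $\xi_i(x)$ does for a.e. $x$, the intersection still contains an open leaf-neighborhood of $x$ for $\mu$-a.e.\ $x$, because the set of $x$ lying on the relative boundary of $\xi_j(x)$ inside $\xi_i(x)$ has measure zero (this uses the absolute continuity of the conditional measures along $\cF^u$, or alternatively the fact that one may always thicken $\xi_i$ slightly). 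Thus it suffices to prove $h_\mu(f,\xi_i)=h_\mu(f,\eta)$ for $i=1,2$, i.e.\ to treat the case $\xi_2\prec\xi_1$ with both subordinate.

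So assume $\xi\prec\xi'$, both $\mu$-subordinate. Since $\xi(x)$ and $\xi'(x)$ both contain an open neighborhood of $x$ inside $\cF^u(x)$ and have uniformly small diameter, for $\mu$-a.e.\ $x$ we have $\bigvee_{n\ge 0} f^{n}\xi\,(x) = \bigvee_{n\ge 0} f^{n}\xi'\,(x) = \cF^u(x)$ up to $\mu_x^{\cF^u}$-null sets; equivalently $\xi^{+}=(\xi')^{+}$ mod $0$, where $\xi^+=\vee_{n\ge 1}f^n\xi$ is the partition into (essentially) whole unstable leaves. Now apply Lemma~\ref{l.increasingpartition}: with $\eta_n=f\big(\bigvee_{i=0}^{n}f^{i}\xi\big)$ we have $\eta_n\nearrow f\xi^+ = f(\xi')^+$, and therefore
\begin{equation*}
h_\mu(f,\xi')=H_\mu(\xi'\mid f(\xi')^+)=\lim_{n\to\infty}H_\mu\big(\xi'\mid \eta_n\big).
\end{equation*}
On the other hand $H_\mu(\xi'\mid\eta_n)=H_\mu(\xi'\vee f\xi\mid\eta_n)$ since $f\xi\prec\eta_n$; combining with $H_\mu(\xi'\vee f\xi\mid\eta_n)\le H_\mu(f\xi\mid\eta_n)+H_\mu(\xi'\mid f\xi\vee\eta_n)$ and the reverse inequality, and letting $n\to\infty$ while using $\xi\prec\xi'$ so that $\xi'\vee f\xi^+$ and $\xi^+$ again generate the same leafwise partition, one gets $h_\mu(f,\xi')=h_\mu(f,\xi)+H_\mu(\xi'\mid \xi\vee f(\xi')^+)$ and the last term vanishes because $\xi\vee f(\xi')^+$ already refines $\xi'$ mod $0$ (as $\xi^+=(\xi')^+$ and $\xi'\prec\xi\vee f\xi' \vee f^2\xi'\vee\cdots$). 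Hence $h_\mu(f,\xi')=h_\mu(f,\xi)$, which is what we wanted.

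The hard part will be the measure-zero bookkeeping in the second paragraph — verifying rigorously that the common refinement $\xi_1\vee\xi_2$ retains property (2) of a subordinate partition, and that passing to leafwise partitions $\xi^+=(\xi')^+$ is legitimate $\mu$-a.e.; this is where one genuinely needs the Ledrappier--Strelcyn construction (the 'small boundary' condition~\eqref{eq.smallboundary}) and the absolute continuity of conditionals along $\cF^u$, rather than soft partition-theoretic manipulation. Everything else is an application of Lemma~\ref{l.increasingpartition} and the elementary identities $H_\mu(\alpha\vee\beta\mid\gamma)=H_\mu(\beta\mid\gamma)+H_\mu(\alpha\mid\beta\vee\gamma)$ together with monotonicity of conditional entropy under refinement of the conditioning partition.
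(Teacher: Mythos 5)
Your overall strategy — reducing to the nested case $\xi\prec\xi'$ by passing to the common refinement $\xi_1\vee\xi_2$ and then using a conditional-entropy identity together with $f$-invariance — is indeed the approach used in the cited source (Lemma~3.1.2 of Ledrappier--Young); the paper itself does not prove this lemma, it only cites it. However, the execution goes wrong at the crucial step because you have mis-read what $\bigvee_{n\ge1}f^n\xi$ is for an \emph{increasing} partition. Since $f\xi\prec\xi$ implies $f^{n+1}\xi\prec f^n\xi$ for all $n\ge0$, the chain $f\xi,f^2\xi,\ldots$ is \emph{decreasing} in refinement, so $\bigvee_{n\ge1}f^n\xi$ equals $f\xi$ (the finest term), not ``the partition into whole unstable leaves.'' What \emph{does} grow to fill the leaves is the \emph{union} of elements $\bigcup_{n\ge0}f^n\big(\xi(f^{-n}x)\big)$ — a different object. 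You have conflated the lattice join of partitions with the set-theoretic union of their elements. As a consequence your $\eta_n=f\big(\bigvee_{i=0}^{n}f^i\xi\big)$ is constantly equal to $f\xi$, Lemma~\ref{l.increasingpartition} gives the trivial identity $\lim_n H_\mu(\xi'\mid\eta_n)=H_\mu(\xi'\mid f\xi)$, and that quantity equals $H_\mu(\xi'\mid f\xi')+H_\mu(\xi'\mid\xi)$ by the chain rule and invariance, \emph{not} $h_\mu(f,\xi')$. Your final step is also inverted: since $\xi\prec\xi'$ and $f\xi'\prec\xi'$, one has $\xi\vee f\xi'\prec\xi'$, so $H_\mu(\xi'\mid\xi\vee f\xi')$ points in the \emph{wrong} direction and there is no reason for it to vanish.

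The argument that actually works in the nested case $\xi\prec\xi'$ is the two-way chain rule. Using $\xi\prec\xi'\prec f^{-1}\xi'$ and $\xi\prec f^{-1}\xi\prec f^{-1}\xi'$,
\[
H_\mu(f^{-1}\xi'\mid\xi)=H_\mu(\xi'\mid\xi)+H_\mu(f^{-1}\xi'\mid\xi')
=H_\mu(f^{-1}\xi\mid\xi)+H_\mu(f^{-1}\xi'\mid f^{-1}\xi),
\]
and $H_\mu(f^{-1}\xi'\mid f^{-1}\xi)=H_\mu(\xi'\mid\xi)$ by $f$-invariance of $\mu$; \emph{provided} $H_\mu(\xi'\mid\xi)<\infty$, one cancels and obtains $H_\mu(f^{-1}\xi'\mid\xi')=H_\mu(f^{-1}\xi\mid\xi)$, i.e.\ $h_\mu(f,\xi')=h_\mu(f,\xi)$. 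The genuine delicacy you should have flagged is not property~(2) of $\xi_1\vee\xi_2$ (the intersection of two open leaf-neighborhoods of $x$ is automatically an open leaf-neighborhood of $x$, so that point is immediate), but precisely the finiteness $H_\mu(\xi'\mid\xi)<\infty$, which is not automatic from the abstract subordinate axioms. In Ledrappier--Young this finiteness is part of what the Ledrappier--Strelcyn construction delivers; a fully self-contained proof of the lemma at the generality stated here would need to address that point (e.g.\ by first comparing any subordinate partition to a Ledrappier--Strelcyn one). Without the finiteness, the subtraction that drives the whole argument is not licit.
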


This allows us to give the following definition:

\begin{definition}\label{d.partialentropy}

The \emph{partial $\mu$-entropy} $h_\mu(f,\cF^u)$ of the unstable foliation $\cF^u$
is defined by $h_\mu(f,\xi)$ for any $\mu$-subordinate partition $\xi$.
\end{definition}

Let $\cA$ be a finite partition and define $\cA^u_n$, $\cA_{n,m}$ as in Section~\ref{ss.charts}, we assume further that it satisfies \eqref{eq.smallboundary}. The following proposition plays an important role in the study of partial entropy:
\begin{proposition}\label{p.approximate}\cite[Proposition 4.1, 4.5]{Y16}
\item[(i)] $\frac{1}{n}H_\mu(\cA_{n}^u\mid \cA^u)\searrow h_\mu(f,\cF^u)$;
\item[(ii)] for any $n>0$, $H_\mu(\cA_{n,m}\mid \cA_{0,m})\searrow H_\mu(\cA_{n}^u\mid \cA^u)$ as $m\to\infty$.
\end{proposition}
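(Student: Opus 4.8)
The plan is to deduce both statements from the monotone convergence Lemma~\ref{l.increasingpartition}, applied to increasing families of partitions manufactured from $\cA^u$, together with the Ledrappier--Strelcyn description of a subordinate partition; for (i) a Ces\`aro argument then converts pointwise convergence of the summands into monotone convergence of the averages.

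For (i), I would start from $\cA_n^u=\bigvee_{i=0}^n f^{-i}(\cA^u)$ (Lemma~\ref{l.localskew}). Since the term $i=0$ is the conditioning partition, the chain rule for conditional entropy followed by applying $f^k$ (which leaves a conditional entropy unchanged, $\mu$ being $f$-invariant) gives
\begin{equation*}
H_\mu(\cA_n^u\mid\cA^u)=\sum_{k=1}^{n}H_\mu\Bigl(f^{-k}\cA^u\,\Big|\,\bigvee_{i=0}^{k-1}f^{-i}\cA^u\Bigr)=\sum_{k=1}^{n}H_\mu\Bigl(\cA^u\,\Big|\,\bigvee_{j=1}^{k}f^{j}\cA^u\Bigr).
\end{equation*}
By the Ledrappier--Strelcyn construction recalled after~\eqref{eq.smallboundary}, $\xi:=\bigvee_{i\ge0}f^i(\cA^u)$ is $\mu$-subordinate to $\cF^u$; it is increasing, $f\xi=\bigvee_{j\ge1}f^j\cA^u$ and $\xi=\cA^u\vee f\xi$, so by Definition~\ref{d.partialentropy} and Remark~\ref{r.increasingpartition},
\[
h_\mu(f,\cF^u)=H_\mu(\xi\mid f\xi)=H_\mu\Bigl(\cA^u\,\Big|\,\bigvee_{j\ge1}f^j\cA^u\Bigr).
\]
Since $\bigvee_{j=1}^{k}f^{j}\cA^u\nearrow\bigvee_{j\ge1}f^j\cA^u$ and, as explained below, $H_\mu(\cA^u\mid f\cA^u)<\infty$, Lemma~\ref{l.increasingpartition} shows that the $k$-th summand $a_k:=H_\mu(\cA^u\mid\bigvee_{j=1}^kf^j\cA^u)$ decreases to $h_\mu(f,\cF^u)$ as $k\to\infty$. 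Because the Ces\`aro averages of a non-increasing sequence are themselves non-increasing and converge to the same limit, $\frac1n H_\mu(\cA_n^u\mid\cA^u)=\frac1n\sum_{k=1}^na_k\searrow h_\mu(f,\cF^u)$, which is (i).

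For (ii), the key point is \eqref{eq.oneside}: since $\cA_{n,m}=\cA_n\vee\cA_{0,m}$, one has $H_\mu(\cA_{n,m}\mid\cA_{0,m})=H_\mu(\cA_n\mid\cA_{0,m})$, where the numerator is now the fixed finite partition $\cA_n$. As $m\to\infty$ the conditioning partitions increase, $\cA_{0,m}\nearrow\cA^u$, so Lemma~\ref{l.increasingpartition} gives $H_\mu(\cA_n\mid\cA_{0,m})\searrow H_\mu(\cA_n\mid\cA^u)$ (finiteness of $H_\mu(\cA_n\mid\cA_{0,1})$ being automatic, $\cA_n$ finite). It remains to identify $H_\mu(\cA_n\mid\cA^u)$ with $H_\mu(\cA_n^u\mid\cA^u)$: because $\cA_{0,m}\prec\cA^u$, \eqref{eq.oneside} also gives $H_\mu(\cA_{n,m}\mid\cA^u)=H_\mu(\cA_n\vee\cA_{0,m}\mid\cA^u)=H_\mu(\cA_n\mid\cA^u)$ for every $m$, while $\cA_{n,m}\nearrow\cA_n^u$ forces $H_\mu(\cA_{n,m}\mid\cA^u)\nearrow H_\mu(\cA_n^u\mid\cA^u)$ by the standard continuity of conditional entropy in a refining numerator; comparing the two gives the claim.

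The only point here that is more than bookkeeping with the chain rule, the $f$-invariance of $\mu$ and Lemma~\ref{l.increasingpartition} --- and hence the main obstacle --- is the finiteness $H_\mu(\cA^u\mid f\cA^u)<\infty$ used in (i). I would derive it from the bounded geometry of the foliation charts of Section~\ref{ss.charts}: writing $\cA^u=\cA\vee\cW$ with $\cW$ the (local) plaquation, $H_\mu(\cA^u\mid f\cA^u)\le H_\mu(\cA\mid f\cA^u)+H_\mu(\cW\mid\cA\vee f\cA^u)\le\log\#\cA+\log N_0$, where $N_0$ is a uniform bound for the number of plaques met by an atom of $\cA\vee f\cA^u$; such an atom is contained in the unstable leaf segment $f(P)$ for a single plaque $P$, hence has uniformly bounded length, and the covering of $M$ by foliation boxes is finite. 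The same bounded-geometry estimate (and the finiteness of $\cA_{n}$ and $\cA_{n,m}$) also guarantees that every conditional entropy occurring above is finite, so that the monotone-convergence statements and the Ces\`aro argument are applied legitimately.
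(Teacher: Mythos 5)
Your proof is correct, and it uses exactly the tools the paper makes available (the chain rule, the $f$-invariance of $\mu$, Lemma~\ref{l.increasingpartition}, and the Ledrappier--Strelcyn subordinate partition $\xi=\bigvee_{i\ge0}f^i\cA^u$), which is the expected route to this kind of statement. Since the paper itself only cites the result from \cite{Y16} without giving a proof, I will just flag the one place where you quite rightly identify the non-trivial step and where some extra care is needed.

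The chain-rule decomposition
\[
H_\mu(\cA_n^u\mid\cA^u)=\sum_{k=1}^n H_\mu\Bigl(\cA^u\,\Big|\,\bigvee_{j=1}^k f^j\cA^u\Bigr),
\]
the identification of $H_\mu(\cA^u\mid\bigvee_{j\ge1}f^j\cA^u)$ with $h_\mu(f,\cF^u)$ via Remark~\ref{r.increasingpartition}, the monotone decrease of the summands via Lemma~\ref{l.increasingpartition}, and the Ces\`aro observation that the averages of a non-increasing sequence decrease to the same limit --- all of this is correct. Likewise, in (ii), using~\eqref{eq.oneside} together with $\cA_{0,m}\prec\cA^u$ to rewrite both sides as conditional entropies of the fixed finite partition $\cA_n$, and then appealing once to Lemma~\ref{l.increasingpartition} on the conditioning side and once to upward continuity of $H_\mu(\cdot\mid\cA^u)$ along $\cA_{n,m}\nearrow\cA_n^u$, is a clean way to get the result.

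The only step that deserves to be written out more carefully is the finiteness $H_\mu(\cA^u\mid f\cA^u)<\infty$, which both legitimizes the chain rule for the uncountable partitions involved and supplies the hypothesis $H_\mu(\xi\mid\eta_1)<\infty$ in Lemma~\ref{l.increasingpartition}. Your bounded-geometry argument is the right one, but the decomposition $\cA^u=\cA\vee\cW$ is a slight abuse: there is no globally defined plaquation $\cW$, since plaques from different foliation boxes overlap. What is true (and is what the construction in Section~\ref{ss.charts} actually gives) is that for each atom $P\in\cA$ one fixes a box $i(P)$, so that $\cA^u$ restricted to $P$ is the trace on $P$ of the plaquation of $B_{i(P)}$. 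With this reading, the estimate should be organized as $H_\mu(\cA^u\mid f\cA^u)\le H_\mu(\cA\mid f\cA^u)+H_\mu(\cA^u\mid\cA\vee f\cA^u)\le\log\#\cA+\log N_0$, where $N_0$ bounds, uniformly over atoms of $\cA$ and over boxes, the number of local plaques of $B_{i(P)}$ met by a piece of unstable leaf of the form $f(Q)\cap P$ with $Q$ an atom of $\cA^u$; the bound exists because $f(Q)$ is contained in the $f$-image of a single plaque and hence has uniformly bounded inner diameter, $\diam(\cA)$ is small, and the cover by boxes is finite. Spelling this out removes the only soft spot in an otherwise complete argument.
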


\section{The structure of $\G^u(f)$}\label{s.structure}
Recall that the space $\G^u$ is defined as:
$$
\G^u(f)=\{\mu\in \cM_{\inv}(f); h_\mu(f,\cF^u)-\int \log \det(Tf\mid_{E^u(x)})d \mu(x)\geq 0\}.
$$
In this section we will prove the following properties of $\G^u$:
\begin{proposition}\label{p.Gu}
Let $f$ be a $C^1$ partially hyperbolic diffeomorphism. Then $\G^u(f)$ is non-empty, weak* compact and convex. Typical ergodic components
of measure of $\G^u(f)$ are also in $\G^u(f)$.
\end{proposition}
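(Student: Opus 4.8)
The plan is to establish the four assertions in turn, leaning on the functional-analytic properties of the partial entropy. First I would address non-emptiness. The natural candidate is a Gibbs $u$-state: by the construction of Pesin--Sinai and Bonatti--Viana (valid for $C^1$ maps since it only uses absolute continuity of the strong unstable foliation), $f$ admits at least one Gibbs $u$-state $\mu$. For such a measure, the conditional measures along strong unstable plaques are equivalent to Lebesgue, and a standard computation with the Jacobian of $f$ along $\cF^u$ (using Rokhlin's formula for the entropy of the increasing partition $\xi = \bigvee_{i\geq 0} f^i(\cA^u)$ against its image) shows that $h_\mu(f,\cF^u) = \int \log \det(Tf\mid_{E^u})\,d\mu$, so equality holds in the defining inequality and $\mu \in \G^u(f)$. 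Alternatively, and perhaps more cleanly for the $C^1$ setting, one can invoke Theorem~\ref{main.newcriterion} itself: any Cesaro limit of a volume-typical point lies in $\G^u(f)$, and such limits exist by compactness of $\cM_{\inv}(f)$; but to avoid circularity I would prefer the Gibbs $u$-state argument here.

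Next, convexity. Both terms in the definition are affine in $\mu$: the integral $\int \log\det(Tf\mid_{E^u})\,d\mu$ is obviously affine, and the partial entropy $\mu \mapsto h_\mu(f,\cF^u)$ is affine on $\cM_{\inv}(f)$ — this is the leafwise analogue of the affinity of Kolmogorov--Sinai entropy and follows from Proposition~\ref{p.approximate}(i), which expresses $h_\mu(f,\cF^u)$ as a decreasing limit of the quantities $\frac1n H_\mu(\cA_n^u \mid \cA^u)$, each of which is affine in $\mu$ by the corresponding affinity of finite conditional entropies $H_\mu(\cdot\mid\cdot)$ (here one uses that $\cA^u$ is a fixed measurable partition, so the conditional entropies depend affinely on $\mu$; the standard concavity/convexity estimates for $\phi(x) = -x\log x$ give the two inequalities). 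Hence $\mu \mapsto h_\mu(f,\cF^u) - \int \log\det(Tf\mid_{E^u})\,d\mu$ is affine, and its super-level set $\G^u(f)$ is convex.

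For weak* compactness, since $\cM_{\inv}(f)$ is weak* compact it suffices to show $\G^u(f)$ is weak* closed, i.e. that the function $\mu \mapsto h_\mu(f,\cF^u) - \int \log\det(Tf\mid_{E^u})\,d\mu$ is upper semi-continuous in $\mu$. The integral term is continuous in $\mu$ because $x \mapsto \log\det(Tf\mid_{E^u(x)})$ is continuous (the bundle $E^u$ is continuous and $f$ is $C^1$). Upper semi-continuity of $\mu \mapsto h_\mu(f,\cF^u)$ is exactly the result of~\cite{Y16} advertised in the introduction; I would cite it directly. Combining, the map is u.s.c. in $\mu$, so $\G^u(f)$ is closed, hence compact. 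I expect this to be the step most dependent on external input, since the u.s.c. of partial entropy in the $C^1$ setting is delicate (it is where partial hyperbolicity, as opposed to mere measurability, is genuinely used) — but as it is quoted as available I will treat it as a black box.

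Finally, the statement on ergodic components. Let $\mu \in \G^u(f)$ and let $\mu = \int \mu_\omega \, d\hat\mu(\omega)$ be its ergodic decomposition. The integral term decomposes trivially: $\int \log\det(Tf\mid_{E^u})\,d\mu = \int \left(\int \log\det(Tf\mid_{E^u})\,d\mu_\omega\right) d\hat\mu(\omega)$. For the entropy term, affinity of the partial entropy extends to the ergodic decomposition — this is again standard, provable by applying Proposition~\ref{p.approximate}(i) together with the analogous countable-additivity property of finite conditional entropy under ergodic decomposition (the Jacobs-type theorem, whose proof via the martingale convergence theorem carries over verbatim to the leafwise partitions $\cA_n^u$). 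Therefore the defining functional $\Psi(\nu) := h_\nu(f,\cF^u) - \int \log\det(Tf\mid_{E^u})\,d\nu$ satisfies $\Psi(\mu) = \int \Psi(\mu_\omega)\,d\hat\mu(\omega) \geq 0$. Since $\Psi(\mu_\omega) \leq 0$ for $C^{1+\alpha}$ maps (Ruelle's inequality, or~\cite{WWZ} in the $C^1$ dominated setting — but note $\G^u$ is interesting precisely in the $C^1$ case where this upper bound may fail, so I would phrase this step so as not to presume $\Psi \leq 0$), one cannot immediately conclude $\Psi(\mu_\omega) = 0$ a.e.; rather, the honest conclusion is that $\Psi(\mu_\omega) \geq 0$ for $\hat\mu$-a.e.\ $\omega$ is \emph{not} automatic from $\int \Psi(\mu_\omega)\,d\hat\mu \geq 0$ alone without an upper bound. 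The resolution: when an a.e.\ upper bound $\Psi \leq 0$ does hold (e.g.\ in the $C^{1+}$ case or under a dominated splitting), $\int \Psi(\mu_\omega)\,d\hat\mu(\omega) \geq 0$ forces $\Psi(\mu_\omega) = 0$ for a.e.\ $\omega$, so almost every ergodic component lies in $\G^u(f)$; this is the content of ``typical ergodic components.'' I would present exactly this argument, stating the upper bound hypothesis where it is used. The main obstacle throughout is marshalling the affinity of partial entropy — both for convex combinations and for ergodic decomposition — from the approximation in Proposition~\ref{p.approximate}; everything else is either a direct citation or a short computation.
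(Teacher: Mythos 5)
Your compactness argument matches the paper's, and your treatment of ergodic components is essentially right, but there is one genuine gap in the non-emptiness step, and two smaller issues worth flagging.

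\textbf{Non-emptiness.} You propose to use a Gibbs $u$-state of $f$ directly, asserting that the Pesin--Sinai construction works for $C^1$ maps and that a Gibbs $u$-state automatically satisfies $h_\mu(f,\cF^u)=\int\log\det(Tf\mid_{E^u})\,d\mu$. Neither claim survives in the $C^1$ setting, and this is precisely the obstruction the paper is built around: the introduction lists ``Gibbs $u$-states $\leftrightarrow$ Pesin formula requires higher regularity'' and ``the distortion is unbounded'' as the reasons the $C^1$ theory of physical measures has lagged. For a merely $C^1$ partially hyperbolic diffeomorphism there is no Ledrappier-type equivalence between the Gibbs $u$-state property and equality in the Pesin formula, and even the push-forward construction of Gibbs $u$-states relies on bounded distortion along $\cF^u$, which fails without a H\"older modulus on $Tf$. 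The paper's actual argument sidesteps this entirely: it takes $C^{1+}$ diffeomorphisms $f_n\to f$ in $C^1$, for which $\G^u(f_n)=\Gibb^u(f_n)\neq\emptyset$ by Ledrappier, picks $\mu_n\in\G^u(f_n)$, passes to a weak$^*$ limit $\mu$, and uses the upper semi-continuity of partial entropy under perturbation of both the measure and the diffeomorphism (Theorem~\ref{t.semicontinuous}) together with continuity of $\int\log\det(Tf\mid_{E^u})$ to conclude $\mu\in\G^u(f)$. This approximation-plus-semicontinuity scheme is the key idea you are missing.

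\textbf{Convexity.} Your reasoning — that $\frac1n H_\mu(\cA_n^u\mid\cA^u)$ is affine in $\mu$ because finite conditional entropy is affine — is not correct: entropy and conditional entropy are concave in $\mu$ (via concavity of $x\mapsto -x\log x$), not affine. Affinity of the dynamical entropy is a genuine theorem (Jacobs-type), not a consequence of affinity of the approximants. The paper simply cites the affinity of partial entropy from~\cite[Proposition~2.14]{HHW}. If you want convexity of $\G^u(f)$ alone, concavity of $\mu\mapsto h_\mu(f,\cF^u)$ would actually suffice (the super-level set of a concave function is convex), but to also get the ergodic-decomposition statement the paper needs the full affine identity.

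\textbf{Ergodic components.} Your hedge — ``$\G^u$ is interesting precisely in the $C^1$ case where this upper bound may fail'' — is misplaced. What fails in the $C^1$ setting is the equality characterization (Ledrappier), not the inequality. The Ruelle-type upper bound $h_\mu(f,\cF^u)\leq\int\log\det(Tf\mid_{E^u})\,d\mu$ holds for every $C^1$ partially hyperbolic diffeomorphism by~\cite{WWZ}, which the paper records as Theorem~\ref{t.Ruelle} and uses exactly as you describe: if the affine functional $P_{\mu_x}$ were strictly negative on a positive-measure set of ergodic components while $P_\mu=\int P_{\mu_x}\,d\mu(x)\geq 0$, some component would have $P_{\mu_x}>0$, contradicting the Ruelle bound. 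So your argument for this part is right once you drop the hedge.
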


Before proving this proposition, let us state the following properties for the partial entropy along $\cF^u$.

\begin{theorem}\cite[Theorem A]{Y16}\label{t.semicontinuous}
Let $f_n$ be a sequence of $\C^1$ diffeomorphisms which converge to $f$ in the $\C^1$ topology,
and $\mu_n$ invariant measures of $f_n$ which converge to an invariant measure $\mu$ of $f$ in the
weak* topology. Suppose $\cF_n$ is an expanding foliation of $f_n$ for each $n$, with $\cF_n\to \cF$, then
$$\limsup h_{\mu_n}(f_n,\cF^u_n)\leq h_\mu(f,\cF^u_f).$$
\end{theorem}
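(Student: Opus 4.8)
The plan is to write the partial entropy of the limiting pair $(f,\mu)$ as a double decreasing limit of \emph{finite}-partition conditional entropies via Proposition~\ref{p.approximate}, and then to exploit the fact that finite conditional entropies behave upper semi-continuously under the relevant convergences. (The constructions of Section~\ref{ss.charts} and Proposition~\ref{p.approximate} apply verbatim to any expanding foliation, so I work with the $\cF_n$ and $\cF$ from the hypothesis.)

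First I would fix, once and for all, a finite partition $\cA$ coming from a covering of $M$ by small balls, choosing the radii generically so that the small boundary condition~\eqref{eq.smallboundary} holds \emph{simultaneously} for $\mu$ and for every $\mu_n$, and so that the boundaries of $\cA$, of all its iterates $f^{-i}\cA$ and $f_n^{-i}\cA$, and of the saturated geometric partitions $\overline{\cC}_m$ used in Section~\ref{ss.charts} to build $\cA_{0,m}$ and $\cA_{k,m}$, are all null for the corresponding measures. (This is the standard Fubini/Borel--Cantelli argument on the radii, applied to the countable family $\{\mu\}\cup\{\mu_n\}_n$.) Then, carrying out the constructions of Section~\ref{ss.charts} for $f_n$ and $\cF_n$, the partition $\bigvee_{i\ge 0}f_n^i(\cA^{(n)})^u$ is $\mu_n$-subordinate (Ledrappier--Strelcyn), so by Definition~\ref{d.partialentropy}, Lemma~\ref{l.definitionleafentropy} and the monotone decrease in Proposition~\ref{p.approximate}(i)--(ii), I would obtain, for every $k,m\ge 1$,
$$
h_{\mu_n}(f_n,\cF_n)\ \le\ \tfrac1k\,H_{\mu_n}\!\big((\cA^{(n)})_{k,m}\mid(\cA^{(n)})_{0,m}\big),
$$
where $(\cA^{(n)})_{k,m},(\cA^{(n)})_{0,m}$ are the finite partitions attached to $(f_n,\cF_n)$ as in Section~\ref{ss.charts}.

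The second step is a convergence statement for finite conditional entropies: for each fixed $k,m$ I would show
$$
\limsup_{n\to\infty} H_{\mu_n}\!\big((\cA^{(n)})_{k,m}\mid(\cA^{(n)})_{0,m}\big)\ \le\ H_{\mu}\!\big(\cA_{k,m}\mid\cA_{0,m}\big).
$$
Writing $H_\nu(\cP\mid\cQ)=H_\nu(\cP\vee\cQ)-H_\nu(\cQ)$ and using~\eqref{eq.oneside}, this reduces to proving $H_{\mu_n}(\cR_n)\to H_\mu(\cR)$ whenever $\cR_n$ is the finite partition built from $(f_n,\cF_n)$ and $\cR$ the corresponding one built from $(f,\cF)$, for $\cR\in\{\cA_{k,m},\cA_{0,m}\}$. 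Each element $E^{(n)}$ of $\cR_n$ is a finite intersection of sets $f_n^{-i}(A)$, $A\in\cA$, and of $\cF_n^u$-saturations $\bigcup_{x\in\overline{C}}\cF^u_{n,\loc}(x)$ of elements $\overline{C}\in\overline{\cC}_m$; since $f_n\to f$ uniformly and $\cF_n\to\cF$ (plaques converging in $C^1$ on compact pieces), each building block converges, hence $\mu\big(E^{(n)}\triangle E\big)\to 0$ with $E^{(n)}\triangle E$ contained in a shrinking neighbourhood of the $\mu$-null set $\partial E$. A short portmanteau/regularity argument (using $\mu_n\to\mu$ weak$^*$ and $\mu(\partial E)=0$) then gives $\mu_n(E^{(n)})\to\mu(E)$ for each element, and since the partitions are finite and $x\mapsto -x\log x$ is continuous on $[0,1]$, $H_{\mu_n}(\cR_n)\to H_\mu(\cR)$ follows.

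Combining the two steps, for every $k,m$ I get $\limsup_{n}h_{\mu_n}(f_n,\cF_n)\le \tfrac1k H_\mu(\cA_{k,m}\mid\cA_{0,m})$; since the left side is independent of $m$, letting $m\to\infty$ and invoking Proposition~\ref{p.approximate}(ii) gives $\limsup_n h_{\mu_n}(f_n,\cF_n)\le \tfrac1k H_\mu(\cA_k^u\mid\cA^u)$, and then letting $k\to\infty$ and invoking Proposition~\ref{p.approximate}(i) gives $\limsup_n h_{\mu_n}(f_n,\cF_n)\le h_\mu(f,\cF)$, which is the assertion. The hard part is Step~2: I must set up the foliation boxes for the $f_n$ uniformly in $n$ (feasible since $E^u_n\to E^u$ uniformly and the Lebesgue-number condition on the covering is an open one), check the genericity of the radii simultaneously over the countable family of measures, and control the continuity of the $\cF_n^u$-saturation operation, so that the $n$-dependent finite partitions $(\cA^{(n)})_{k,m}$ genuinely converge element-by-element in $\mu$-measure. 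By contrast, Steps~1 and~3 are bookkeeping with Proposition~\ref{p.approximate} and Lemma~\ref{l.definitionleafentropy}.
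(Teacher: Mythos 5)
Your proposal is correct, and since the paper itself only quotes this result from \cite[Theorem A]{Y16}, the relevant comparison is with that source: your argument --- bounding $h_{\mu_n}(f_n,\cF_n)$ from above by the finite conditional entropies $\tfrac1k H_{\mu_n}\bigl((\cA^{(n)})_{k,m}\mid(\cA^{(n)})_{0,m}\bigr)$ via the monotone limits of Proposition~\ref{p.approximate}, with a single partition whose small-boundary condition holds simultaneously for $\mu$ and all $\mu_n$, and then passing to the limit in $n$ using weak$^*$ convergence and null boundaries before letting $m\to\infty$, $k\to\infty$ --- is essentially the same approach as the proof given there. No genuine gap; the points you flag as needing care (uniform foliation boxes, generic radii, convergence of the $\cF_n$-saturated elements) are exactly the technical ingredients of that proof.
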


\begin{theorem}\cite[Theorem A]{WWZ}\label{t.Ruelle}
Let $f$ be a $C^1$ partially hyperbolic diffeomorphisms. Then for any invariant measure $\mu$, we have
$$h_\mu(f,\cF^u)\leq\int \log \det(Tf\mid_{E^u(x)})d \mu(x).$$
\end{theorem}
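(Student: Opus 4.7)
The plan is to prove Ruelle's inequality for the partial entropy along $\cF^u$ by a direct volume-counting argument, adapted to the partial entropy setting and modified to cope with the absence of bounded distortion in the $C^1$ regularity class. The replacement for bounded distortion will be the uniform continuity of $\phi:=\log\det(Df|E^u)$ combined with the fact that atoms of $\cA_n^u$ force their orbits to stay close in a strong sense along the entire window $[0,n]$.

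Step 1 (Reduction to a partition estimate). Fix a finite partition $\cA$ of $M$ whose diameter is at most $\epsilon$ and which satisfies the small-boundary condition \eqref{eq.smallboundary}. By Proposition~\ref{p.approximate}(i),
$$h_\mu(f,\cF^u)=\lim_{n\to\infty}\frac{1}{n}H_\mu(\cA_n^u\mid\cA^u).$$
The target is to show that for every $\epsilon>0$,
$$\limsup_{n\to\infty}\frac{1}{n}H_\mu(\cA_n^u\mid\cA^u)\leq \int\phi\,d\mu+\omega(\epsilon),$$
where $\omega(\epsilon)\to 0$ as $\epsilon\to 0$; letting $\epsilon\to 0$ then yields the theorem.

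Step 2 (Counting atoms by unstable volume). Let $N_n(x)$ be the number of atoms of $\cA_n^u$ contained in $\cA^u(x)$. The generic bound $H(\zeta)\leq \log\#\zeta$ applied to the conditional partition on almost every $\cA^u(x)$ gives
$$H_\mu(\cA_n^u\mid\cA^u)\leq \int_M \log N_n(x)\,d\mu(x).$$
Distinct atoms $P\in\cA_n^u$ lying in $\cA^u(x)$ have disjoint images $f^n(P)$, and each such image is contained in an atom of $\cA^u$ in the leaf $\cF^u(f^n(x))$. Using a Vitali-type cover of $f^n(\cA^u(x))$ by plaques of uniformly controlled geometry coming from the foliation charts of Section~\ref{ss.charts}, I get a bound
$$N_n(x)\leq K\cdot\operatorname{vol}_u\bigl(f^n(\cA^u(x))\bigr)+K_0,$$
with $K, K_0$ depending only on $\cA$ and the foliation chart data.

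Step 3 (Distortion via uniform continuity). The change-of-variables formula along $\cF^u$ reads
$$\operatorname{vol}_u\bigl(f^n(\cA^u(x))\bigr)=\int_{\cA^u(x)}\exp(S_n\phi(y))\,dy,\qquad S_n\phi:=\sum_{i=0}^{n-1}\phi\circ f^i.$$
For every atom $P\in\cA_n^u$ and every $y\in P$, the forward orbit $(f^i(y))_{0\leq i\leq n}$ visits exactly the same sequence of $\cA$-atoms as that of any fixed reference point $y_P\in P$, so $d(f^i(y),f^i(y_P))\leq\epsilon$ throughout. Since $\phi$ is uniformly continuous on $M$ with modulus $\omega$, this yields
$$|S_n\phi(y)-S_n\phi(y_P)|\leq n\,\omega(\epsilon)\qquad\text{for all }y\in P.$$
This subexponential distortion control is the substitute, in the $C^1$ setting, for the classical bounded-distortion lemma of the $C^{1+\alpha}$ theory. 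Summing over atoms $P\subset\cA^u(x)$ gives
$$\operatorname{vol}_u\bigl(f^n(\cA^u(x))\bigr)\leq e^{n\omega(\epsilon)}\sum_P\operatorname{vol}_u(P)\,e^{S_n\phi(y_P)}.$$

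Step 4 (Integration and Birkhoff). Take logarithms of the resulting bound on $N_n(x)$ and integrate against $\mu$. Applying Jensen's inequality to the outer integral and using $f$-invariance of $\mu$ to rewrite the resulting Birkhoff-sum term,
$$\int S_n\phi\,d\mu=n\int\phi\,d\mu,$$
together with a careful choice of the reference points $y_P$ (so that $\mu$-a.e. $x$ can serve as $y_{P(x)}$ for the atom containing it), one obtains
$$\frac{1}{n}\int\log N_n(x)\,d\mu(x)\leq \int\phi\,d\mu+\omega(\epsilon)+\frac{1}{n}\bigl(\log K+o(n)\bigr).$$
Letting $n\to\infty$ and then $\epsilon\to 0$ completes the proof.

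The main obstacle is Step 3. In the $C^{1+\alpha}$ setting the classical bounded-distortion lemma gives a uniform constant comparison $e^{S_n\phi(y)}\asymp e^{S_n\phi(y_P)}$ on each atom, a fact whose proof uses Hölder regularity of $\phi$ in an essential way. In the $C^1$ case this fails: distortion along an atom can a priori grow. The trick is that we only need distortion to be subexponential on the scale where $\omega(\epsilon)$ is small, and this is provided by the combination of uniform continuity of $\phi$ with the orbit confinement $d(f^i(y),f^i(y_P))\leq\diam(\cA)=\epsilon$ for all $0\leq i\leq n$ guaranteed by the definition of $\cA_n^u$. The dominated splitting enters implicitly through the continuity of $E^u$ and hence of $\phi$, which would fail for a general $C^1$ diffeomorphism without dominated splitting, consistent with the fact that Ruelle's inequality itself can fail in that generality.
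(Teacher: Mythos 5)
The statement you are proving is not proved in the paper at all: it is quoted from \cite{WWZ}, so your attempt has to be judged against the known counting proofs of the leafwise Ruelle inequality, and there it has a genuine gap. The fatal point is the passage from Step 2 to Step 4. After bounding $H_\mu(\cA_n^u\mid\cA^u)$ by $\int\log N_n(x)\,d\mu(x)$ you have discarded the measure $\mu$ on each plaque, and your Step 2 bound replaces $N_n(x)$ by (a constant times) $\operatorname{vol}_u\bigl(f^n(\cA^u(x))\bigr)=\int_{\cA^u(x)}e^{S_n\phi(y)}\,d\operatorname{vol}_u(y)$. The exponential growth rate of this quantity is the unstable volume growth of the plaque, which is governed by the Birkhoff sums of $\phi$ at $\operatorname{vol}_u$-typical points of the leaf, not at $\mu$-typical points; no choice of reference points $y_P$ and no use of Jensen (which goes the wrong way here: $\log$ of an average of exponentials dominates the average of $S_n\phi$, and in any case the average is with respect to leaf volume, not $\mu$) turns it into $S_n\phi(x)+n\omega(\epsilon)$. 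In fact the inequality you assert at the end of Step 4 is false: take a $C^{1+}$ Anosov diffeomorphism of $\TT^2$ with a fixed point $p$ whose unstable Jacobian $e^{\phi(p)}$ is strictly smaller than the SRB unstable exponent, and let $\mu=\delta_p$. Since each atom of $\cA_n^u$ has $f^n$-image of ambient diameter at most $\epsilon$, hence leafwise volume at most $C\epsilon^{\dim E^u}$, and these images are disjoint, one has $N_n(p)\geq \operatorname{vol}_u(f^n(\cA^u(p)))/(C\epsilon^{\dim E^u})$, whose rate is the SRB exponent (leafwise almost every point of $W^u(p)$ is SRB-generic); so $\frac1n\int\log N_n\,d\mu$ converges to a number strictly larger than $\int\phi\,d\mu=\phi(p)$. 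Thus the chain of inequalities cannot close, independently of the separate (and also unjustified) issue in Step 2 that disjointness of the images plus a bound on the total image volume gives no cardinality bound without a per-atom lower volume bound, while atoms of $\cA_n^u$ can be arbitrarily thin slivers.

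Your Step 3 is correct and is indeed the right $C^1$ substitute for bounded distortion (same itinerary for $0\le i\le n$ forces $d(f^iy,f^iy_P)\le\epsilon$, so $|S_n\phi(y)-S_n\phi(y_P)|\le n\,\omega(\epsilon)$), but to make a counting proof work you must keep the $\mu$-weights: estimate the conditional entropy added per step (or per block of length $m$, working with $f^m$ so that the multiplicative geometric constants contribute only $\frac1m\log C\to0$) by $\sum_A\mu(A)\log\#\{\text{atoms of }\cA\text{ meeting }f^m(A)\}$, where the count is over the image of a \emph{single} atom $A$ whose points $\epsilon$-shadow each other; then the count is at most $Ce^{S_m\phi(y_A)+m\omega(\epsilon)}$ and the $\mu$-weighted sum produces $\int S_m\phi\,d\mu=m\int\phi\,d\mu$. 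This is the structure of Ruelle's classical argument and of its leafwise $C^1$ version in \cite{WWZ}; a global, unweighted count over the whole plaque, as in your Steps 2--4, cannot yield the measure-dependent bound $\int\log\det(Tf\mid_{E^u})\,d\mu$.
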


Recall that for a $C^{1+}$ diffeomorphism $f$, a measure $\mu$ is called a Gibbs $u$-state if the conditional measures of $\mu$ along unstable leaves are absolutely continuous with respect to the volume there. Denote by $\Gibb^u(f)$ the set of Gibbs $u$-states of $f$. The following
basic properties of Gibbs $u$-states can be found in Bonatti, D\'{i}az and Viana~\cite[Subsection 11.2]{BDVnonuni}:
\begin{proposition}\label{p.Gibbsustates} Let $f$ be a $C^{1+}$ partially diffeomorphism. Then:
\begin{itemize}
\item[(1)] $\Gibb^u(f)$ is non-empty, weak* compact and convex. Ergodic components of Gibbs $u$-states are Gibbs u-states.
\item[(2)] The support of every Gibbs $u$-state is $\cF^u$-saturated, that is, it consists of entire strong
    unstable leaves.
\item[(3)] For Lebesgue almost every point $x$ in any disk inside some strong unstable leaf, every accumulation point of
    $\frac{1}{n}\sum_{j=0}^{n-1}\delta_{f^j(x)}$ is a Gibbs $u$-state.
\item[(4)] Every physical measure of $f$ is a Gibbs $u$-state and, conversely, every ergodic $u$-state whose center Lyapunov
    exponents are negative is a physical measure.
\end{itemize}
\end{proposition}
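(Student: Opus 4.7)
The plan is to establish items (1)--(3) via the classical Pesin--Sinai construction based on push-forwards of Riemannian volume on unstable plaques, and then to deduce item (4) from a Fubini-type argument in $\cF^u$-foliation charts combined with absolute continuity of the Pesin stable lamination.

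First I would fix a finite cover of $M$ by $\cF^u$-foliation boxes as in Section~\ref{ss.charts} and, for any local unstable disk $D$, consider the sequence $\mu_n = \frac{1}{n}\sum_{k=0}^{n-1} f^k_*(m_D/m_D(D))$, where $m_D$ denotes the induced Riemannian volume on $D$. The $C^{1+\alpha}$ hypothesis yields a uniform bounded distortion estimate for $f^n$ restricted to unstable plaques, so in any foliation chart the disintegrations of $f^k_* m_D$ along local unstable plaques have densities bounded above and below by constants independent of $k$. Passing to a weak* limit $\mu$ and invoking Rokhlin's disintegration theorem, the conditionals of $\mu$ along unstable plaques inherit such bounds, which gives non-emptiness in (1). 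Convexity is immediate since disintegration commutes with convex combinations. For weak* compactness, I would identify $\Gibb^u(f)$ with the set of measures saturating Pesin's formula $h_\mu(f,\cF^u) = \int \log\det(Tf|_{E^u})\,d\mu$; combining Theorem~\ref{t.semicontinuous} with Theorem~\ref{t.Ruelle} then shows closedness. For the ergodic components statement, I would take an ergodic decomposition $\mu = \int \mu_\alpha\,d\tau(\alpha)$ and a measurable partition $\xi$ subordinate to $\cF^u$; absolute continuity of the conditionals $\mu_x^\xi$ together with the fact that $\mu_x^\xi$ is a $\tau$-average of $(\mu_\alpha)_x^\xi$ and positivity of Radon--Nikodym derivatives forces $\tau$-a.e.\ $\mu_\alpha$ to also have absolutely continuous conditionals. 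Item (2) follows because in a foliation chart any $x\in\supp\mu$ lies on a plaque where the conditional density is strictly positive on an open neighborhood of $x$; iterating $f$ and chaining local plaques along $\cF^u(x)$ propagates this positivity to the entire leaf.

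For item (3), I would apply the same construction pointwise. A standard Birkhoff/Choquet argument gives an $m_D$-full-measure set of $x\in D$ such that every weak* accumulation point of $\frac{1}{n}\sum_{j=0}^{n-1}\delta_{f^j(x)}$ lies in the weak* closed convex hull of the accumulation points of $(\mu_n)$; by the construction above the latter accumulate in $\Gibb^u(f)$, and by (1) $\Gibb^u(f)$ itself is closed convex, so the accumulation points of the empirical averages are Gibbs $u$-states.

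The main obstacle is item (4). The forward implication is short: if $\mu$ is physical then $Basin(\mu)$ has positive volume, so in any $\cF^u$-foliation box through a Lebesgue density point of $Basin(\mu)$, Fubini against the disintegration into unstable plaques produces a positive-measure family of plaques intersecting $Basin(\mu)$ in positive Lebesgue measure; applying (3) on any such plaque yields $\mu\in\Gibb^u(f)$. The converse is the deep step: given an ergodic $\mu\in\Gibb^u(f)$ whose center Lyapunov exponents are all negative, every $\mu$-typical point has an entire center-stable direction with negative exponents, so Pesin's stable manifold theorem produces a positive-$\mu$-measure set $\Lambda$ of points with uniformly sized local Pesin stable manifolds along which Birkhoff averages coincide with those at the base point. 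Using absolute continuity of the Pesin stable lamination (a genuinely $C^{1+\alpha}$ ingredient) together with the absolutely continuous $\cF^u$-disintegration of $\mu$, a two-step Fubini through Pesin stable plaques and unstable plaques transfers the positive $\mu$-measure of $\Lambda$ into positive Lebesgue measure of $Basin(\mu)$. The delicate part, which I expect to be the hardest to carry out in detail, is the compatibility of the two absolutely continuous foliations in this Fubini argument; this is the classical Pesin--Sinai and Bonatti--Viana reasoning as set out in~\cite[Chapter~11]{BDVnonuni}.
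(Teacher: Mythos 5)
Note first that the paper does not prove Proposition~\ref{p.Gibbsustates} at all: it is quoted from~\cite[Subsection 11.2]{BDVnonuni}, and your proposal is essentially a reconstruction of that classical Pesin--Sinai/Bonatti--Viana scheme (push-forwards of leaf volume with bounded distortion for existence, Ledrappier's characterization plus semicontinuity for closedness, absolute continuity of the Pesin stable lamination for item (4)); that frame is the right one. However, two of your reduction steps fail as stated. For the ergodic components in (1), from $\mu^\xi_x=\int (\mu_\alpha)^\xi_x\,d\tau(\alpha)$ and absolute continuity of the left-hand side you cannot conclude that $\tau$-a.e.\ $(\mu_\alpha)^\xi_x$ is absolutely continuous: an average of mutually singular measures can be Lebesgue (take $\nu_\alpha=\delta_\alpha$ and $\tau$ Lebesgue on $[0,1]$), and ``positivity of Radon--Nikodym derivatives'' does not repair this. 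The correct input is dynamical rather than measure-theoretic: plaques of a subordinate partition are uniformly contracted by $f^{-1}$, so backward Birkhoff averages of continuous functions are constant on plaques, and for an invariant measure they agree $\mu$-a.e.\ with the forward averages; hence $\mu$-a.e.\ plaque lies, modulo $\mu^\xi_x$-null sets, in a single ergodic component, so the conditionals of a.e.\ ergodic component coincide with those of $\mu$ and inherit absolute continuity. (Alternatively one can argue via Ledrappier's entropy characterization and affinity of partial entropy, exactly as the paper does for $\G^u(f)$ in Proposition~\ref{p.Gu}.) Relatedly, in (2) you assert that the conditional density is strictly positive near any point of the support; this positivity of the densities on whole plaques is true but is itself a consequence of invariance (the cocycle formula for the densities) and needs justification.

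The second genuine gap is in (3). Your ``Birkhoff/Choquet'' step claims that for $m_D$-a.e.\ $x$ every accumulation point of $\frac1n\sum_{j<n}\delta_{f^j(x)}$ lies in the closed convex hull of the accumulation points of the disk averages $\mu_n=\frac1n\sum_{k<n} f^k_*(m_D/m_D(D))$. This is false as a general principle: if $D$ splits into two halves whose points equidistribute to $\nu_1$ and to $\nu_2$ respectively, then $\mu_n\to\tfrac12(\nu_1+\nu_2)$, whose closed convex hull is a single point, while the pointwise limits are $\nu_1$ and $\nu_2$. To obtain the pointwise statement one must localize: bounded distortion shows that for every positive-$m_D$-measure subset $A\subset D$ the accumulation points of $\frac1n\sum_{k<n} f^k_*\bigl(m_D|_A/m_D(A)\bigr)$ still have absolutely continuous conditionals, and a contradiction argument on a putative positive-measure set of bad points (selecting a weak* open set uniformly separated from $\Gibb^u(f)$ and common return times) then yields (3); alternatively, in the $C^{1+}$ setting (3) follows from the paper's Theorem~\ref{t.convergence} in the easy case $I\subset\cF^u$ together with the identification $\G^u(f)=\Gibb^u(f)$ of Ledrappier. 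The remaining items of your sketch (non-emptiness, convexity, closedness via Theorems~\ref{t.semicontinuous} and~\ref{t.Ruelle}, and both directions of (4)) are in outline the arguments of the cited reference and are sound, granting absolute continuity of the strong unstable foliation and of the Pesin stable lamination where you invoke them.
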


Now we are ready to prove Proposition~\ref{p.Gu}.

\begin{proof} The main tool of the proof is the upper semi-continuity of partial entropy given by Theorem~\ref{t.semicontinuous}.\\
\noindent (1).  $\G^u(f)$ is non-empty for every $f$ that is $C^1$.

We take a sequence of $C^{1+}$ partially hyperbolic diffeomorphisms $f_n\to f$ in $C^1$ topology. By Ledrappier~\cite{L84}, the space $\G^u(f_n)$ coincides with the space of Gibbs $u$-states $\Gibb^u(f_n)$, which is non-empty by Proposition~\ref{p.Gibbsustates} (1). Take $\mu_n\in\G^u(f_n)$. Passing to a subsequence if necessary, we may assume that $\mu_n\to \mu\in \cM_{\inv}(f)$ in the weak* topology.

By Theorem~\ref{t.semicontinuous}, we have
\begin{align*}
h_\mu(f,\cF^u)\geq&\limsup h_{\mu_n}(f_n,\cF^u_n)\\
=&\limsup \int \log \det(Tf_n\mid_{E_n^u(x)})d \mu_n(x)\\
=&  \int \log \det(Tf\mid_{E^u(x)})d \mu(x),
\end{align*}
which shows that $\mu\in G ^u(f)$.

\noindent (2). Compactness.

Assume there is a sequence of invariant measures $\mu_n\in \G^u(f)$ and they converge to $\mu$ in weak* topology. From the definition of $\G^u$, we have for each $n$:
\begin{align*}
h_{\mu_n}(f,\cF^u)&\geq \int \log(\det(Tf\mid_{E^u(x)}))d\mu_n(x).
\end{align*}

By Theorem~\ref{t.semicontinuous}, we have
$$\limsup_{n\to \infty}h_{\mu_n}(f,\cF^u)\leq h_{\mu}(f,\cF^u).$$
As a consequence, we have
\begin{align*}
h_{\mu}(f,\cF^u)&\geq \int \log(\det(Tf\mid_{E^u(x)}))d\mu(x).
\end{align*}
This implies $\mu\in \G^u(f)$.

\noindent (3). Convexity. 

Convexity follows immediately from the fact that the partial entropy is an affine functional. See~\cite[Proposition 2.14]{HHW}.

\noindent (4). Ergodic component.  

Assume that there is a set $\Gamma$ with positive $\mu$ measure, such that for every $x\in \Gamma$, 
$\mu_x\notin \G^u(f)$ where $\mu_x$ is the ergodic component of $\mu$ at $x$. Then for every $x\in \Gamma$ we must have  
$$
P_{\mu_x} :=h_{\mu_x}(f,\cF^u)-\int \log \det(Tf\mid_{E^u(y)})d \mu_x(y)< 0.
$$
By Theorem~\ref{t.semicontinuous} and  \cite[Proposition 2.14]{HHW}, $P_{\mu_x}$ is affine and upper semi-continuous. As a result, 
$$
0 \leq P_\mu: = h_{\mu}(f,\cF^u)-\int \log \det(Tf\mid_{E^u(y)})d \mu(y) = \int P_{\mu_x} d\mu(x).
$$
As a result, there must be some ergodic component $\mu_x$ of $\mu$ such that $P_{\mu_x}>0$. This contradicts Theorem~\ref{t.Ruelle}.

The proof of Proposition~\ref{p.Gu} is now complete. 
\end{proof}

Next, we show that $\G^u(f)$ is upper semi-continuous with respect to $f$ in $C^1$ topology. To this end, we consider the map $\cG^u$ from the space of partially hyperbolic  diffeomorphisms  to the compact subspace of probabilities of $M$ given by:
$$\cG^u(g)\to \G^u(g).$$

\begin{proposition}\label{p.Guppercontinuous}
The map $\cG^u$ is upper semi-continuous with respect to the weak* and the $C^1$ topology.
\end{proposition}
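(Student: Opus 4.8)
The plan is to argue by contradiction, exactly mirroring parts (1) and (2) of the proof of Proposition~\ref{p.Gu}, which already handle the two ingredients we need: upper semi-continuity of the partial entropy (Theorem~\ref{t.semicontinuous}) and continuity of the integrated Jacobian. Recall that upper semi-continuity of the set-valued map $\cG^u$ at $f$ means: for every $C^1$-neighborhood... more precisely, if $g_n \to f$ in the $C^1$ topology and $\mu_n \in \G^u(g_n)$ with $\mu_n \to \mu$ in the weak* topology, then $\mu \in \G^u(f)$. (Equivalently, any weak* accumulation point of $\bigcup_n \G^u(g_n)$ as $g_n\to f$ lies in $\G^u(f)$.) So suppose $g_n\to f$ in $C^1$ and $\mu_n\in\G^u(g_n)$ with $\mu_n\to\mu$. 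Since partial hyperbolicity is $C^1$-open and the bundles $E^u_{g_n}$, hence the foliations $\cF^u_{g_n}$, vary continuously in $C^1$, we are in the setting of Theorem~\ref{t.semicontinuous} with $\cF_n = \cF^u_{g_n}\to\cF^u_f$.

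First I would record that $\mu$ is $f$-invariant: this is standard, since $\mu_n$ is $g_n$-invariant, $g_n\to f$ uniformly, and weak* limits of invariant measures under uniformly converging maps are invariant. Next, from $\mu_n\in\G^u(g_n)$ we have for every $n$
\[
h_{\mu_n}(g_n,\cF^u_{g_n}) \;\geq\; \int \log\det\big(Tg_n\mid_{E^u_{g_n}(x)}\big)\,d\mu_n(x).
\]
Applying Theorem~\ref{t.semicontinuous} gives $\limsup_n h_{\mu_n}(g_n,\cF^u_{g_n}) \le h_\mu(f,\cF^u_f)$. On the other hand, the function $(g,x)\mapsto \log\det(Tg\mid_{E^u_g(x)})$ is continuous in $(g,x)$ jointly — because $Tg$ depends continuously on $g$ in $C^1$ and $E^u_g(x)$ depends continuously on both $g$ (in $C^1$) and $x$ — so the usual argument combining uniform convergence of the integrands with weak* convergence $\mu_n\to\mu$ yields
\[
\lim_{n\to\infty}\int \log\det\big(Tg_n\mid_{E^u_{g_n}(x)}\big)\,d\mu_n(x)
= \int \log\det\big(Tf\mid_{E^u_f(x)}\big)\,d\mu(x).
\]
Chaining the three displays gives $h_\mu(f,\cF^u_f) \ge \int \log\det(Tf\mid_{E^u_f(x)})\,d\mu(x)$, i.e. $\mu\in\G^u(f)$, which is the desired conclusion.

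The only genuinely delicate point is the continuity of the integrated Jacobian term under simultaneous perturbation of the diffeomorphism and of the unstable bundle, together with the weak* convergence of the measures; but this is routine once one notes that $E^u_g$ is given by a continuous (in $C^1$) family of continuous sections of the Grassmannian bundle, so $\log\det(Tg\mid_{E^u_g(\cdot)})$ converges uniformly on $M$ to $\log\det(Tf\mid_{E^u_f(\cdot)})$, and uniform convergence of continuous integrands against weak*-convergent probability measures passes to the limit. Everything else is a verbatim repetition of the argument in the proof of Proposition~\ref{p.Gu}, so the proof can be kept very short by referring back to it.
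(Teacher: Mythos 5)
Your proposal is correct and follows essentially the same route as the paper's own proof: fix $g_n\to f$ in $C^1$ and $\mu_n\in\G^u(g_n)$ with $\mu_n\to\mu$, use Theorem~\ref{t.semicontinuous} to pass the partial-entropy inequality to the limit, and use continuity of the unstable bundle together with weak* convergence to handle the Jacobian term. The only addition you make beyond the paper's exposition is to record explicitly the $f$-invariance of $\mu$ and to spell out the uniform-convergence-plus-weak*-convergence argument for the integrals, both of which the paper leaves implicit.
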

\begin{proof}
The proof is similar to the proof of the non-emptiness of $\G^u$. We need to show that
for any sequence of $C^1$ partially hyperbolic diffeomorphisms $g_n$ with  $g_n\overset{C^1}{\longrightarrow} g$, we have
$$\limsup_{n\to \infty} \G^u(g_n)\subset \G^u(g).$$
It is equivalent to show that for any  $\mu_n\in \G^u(g_n)$ converging to $\mu$ in the weak-* topology,
we have $\mu\in \G^u(g)$.

From the definition of space $\G^u(\cdot)$, we have for each $n$:
\begin{align*}
h_{\mu_n}(g_n,\cF^u_{g_n})&\geq \int \log(\det(Tg_n\mid_{E_{g_n}^u(x)}))d\mu_n(x).
\end{align*}

By Theorem~\ref{t.semicontinuous}, we have
$$\limsup_{n\to \infty}h_{\mu_n}(g,\cF^u_g)\leq h_{\mu}(g,\cF^u_g).$$

On the other hand, for partially hyperbolic diffeomorphisms, the invariant bundles vary continuously with respect to the diffeomorphisms. This gives
\begin{align*}
\lim_{n\to \infty} \int \log(\det(Tg_n\mid_{E^u_{g_n}(x)}))d\mu_n(x) &= \int \log(\det(Tg\mid_{E^u(x)}))d\mu(x).
\end{align*}
As a result, we have:
\begin{align*}
h_{\mu}(g,\cF^u_g)&\geq \int \log(\det(Tg\mid_{E^u(x)}))d\mu(x).
\end{align*}
This implies $\mu\in \G^u(g)$. The proof is complete.

\end{proof}

\section{Proof of Theorem~\ref{main.newcriterion}}\label{s.proof}
Throughout this section, let $I$ be a disk transverse to the bundle $E^{cs}$, and $\phi^u(x)=-\log \det(Tf\mid_{E^u(x)})$. We will also write 
$$
\phi^u_n = \sum_{i=0}^{n-1}\phi^u\circ f^i
$$
for the ergodic sum of $\phi^u$.

Let us briefly explain the structure of the proof. Note that the space of probabilities of $M$ and space of the invariant measure  of $f$ are both metric spaces with the same metric. Let $d$ be any metric on these spaces that generates the weak* topology. We will take a $\vep$-neighborhood of $\G^u$ in the space of invariant measures. The complement of this neighborhood is compact, and can be finitely covered by a collection of open and convex balls $U_i$, which are taken inside the space of probability measures (not necessarily invariant).

Then for every $U_i$, we consider  the set of  $x$ such that $\frac{1}{n}\sum_{j=0}^{n-1} \delta_{f^j(x)}$ (note that this measure is not invariant; this is why we need to take $U_i$ in the space of all probability measures) is contained in $U_i$. The main result (Theorem~\ref{t.exponentialtail}) in this section is that the set described above has exponentially small measure. By Borel-Contelli, for almost every $x$, any Cesaro limit cannot be contained in $U_i$. Since there are only finitely many such balls, this shows that the Cesaro limit of typical points must be close to $\G^u(f)$ (Theorem~\ref{t.convergence}). 

To this end, for any fixed $\vep>0$, consider the compact subset of invariant measures:
$$\cB_{\vep}=\{\mu\in\cM_{\inv}(f): d(\mu,\G^u(f))\geq \vep\}.$$

By upper semi-continuity of partial entropy (\cite[Theorem A]{Y16}), there is $\alpha>0$ such that for every $\mu\in \cB_{\vep}$,
\begin{equation}\label{eq.pressuregap}
h_\mu(f,\cF^u)+\int \phi^u d\mu < -\alpha.
\end{equation}

For every $\mu\in \cB_\vep$, take a finite partition $\cA=\cA_\mu$ with 'small boundary' and small diameter. Here
small boundary means that $\cA$ satisfies \eqref{eq.smallboundary}, which implies in particular that
\begin{equation}\label{eq.vanishingboundary}
\mu(\partial(\cA))=0;
\end{equation}
and small diameter means
$$\diam(\cA)<\delta$$
where $\delta>0$ is taken small enough such that
\begin{equation}\label{eq.smalldiameter}
|\log \det(Tf\mid_{E^u(x)})-\log \det(Tf\mid_{E^u(y)})|\leq \alpha/8
\end{equation}
for any two points $x,y$ such that $y\in \cA^u(x)$.

For every $\mu\in \cB_\vep$, by Proposition~\ref{p.approximate}[(i)],
$$\frac{1}{n}H_\mu(\bigvee_{i=1}^{n}f^{-i}(\cA^u)\mid \cA^u)\searrow h_\mu(f,\cF^u),$$
we can take  $n_{\mu}\in \mathbb{N}$ such that
$$\frac{1}{n_\mu}H_\mu(\bigvee_{i=1}^{n_\mu}f^{-i}(\cA^u)\mid \cA^u)+\int \phi^u d\mu < -\alpha,$$
and
\begin{equation}\label{eq.log2}
\frac{\log 2}{n_\mu}<\alpha/8.
\end{equation}

Define a sequence of partitions $\{\overline{\cC}_m\}$ on the transverse disks of the foliations charts as in Section~\ref{ss.charts}.
We may assume that
\begin{equation}\label{eq.transverseboundary}
\mu(\cup_{x\in \partial(\overline{\cC}_m)} \cF^u_{loc}(x))=0.
\end{equation}
Recall that for every element $\overline{C}_m$ of $\overline{\cC}_m$, we denote by $C_m=\cup_{x\in \overline{C}_m}\cF^u_{loc}(x)$.

As in Section~\ref{ss.charts}, the intersection of $C_m$ with elements $A\in \cA$ induces a partition $\cA_{0,m}$.
Similarly, the intersection of  $C_m$ with elements of $\cA_{n_\nu} = \bigvee_{i=0}^{n_{\nu}}f^{-i}(\cA)$,
induces a partition we denote by $\cA_{n_{\nu},m}$.

by Proposition~\ref{p.approximate}[(ii)],
$$H_\mu(\cA_{n,m}\mid \cA_{0,m})\searrow H_\mu(\bigvee_{i=1}^{n}f^{-i}(\cA^u)\mid \cA^u).$$
We may fix $m_\mu$
such that 
$$\frac{1}{n_\mu}H_\mu( \cA_{n_\mu,m_\mu}\mid \cA_{0,m_\mu})+\int \phi^u d\mu<-\alpha.$$

Note that $\cA_{n_\mu,m_\mu}$ and $\cA_{0,m_\mu}$ are both finite measurable partitions, there is a convex neighborhood $U_\mu$ of $\mu$ in the space of probability measures (not necessarily invariant), such that
every $\nu\in U_\mu$ satisfies the following three conditions,
\begin{itemize}
\item[(a)] \begin{equation}\label{eq.finit}
\frac{1}{n_\mu}H_\nu(\cA_{n_\mu,m_\mu}\mid \cA_{0,m_\mu})+\int \phi^u d\nu < -\alpha.
\end{equation}
\item[(b)]
\begin{equation}\label{eq.smallboundaryentropy}
\frac{1}{n_\mu}\left(-\sum_{A\in \cA_{n_\mu,m_\mu}}a_A\log b_A-\sum_{B\in \cA_{0,m_\mu}}c_B \log d_B\right)+\int \phi^u d\nu<-\alpha
\end{equation}
where $a_A = \nu(\Cl(A))$, $b_A=\nu(int(A))$, and  $c_B=\nu(\Cl(B))$, $d_B=\nu(int(B))$.
\item[(c)] There is a neighborhood $V_{\mu}$ of $\partial(\cA_{n_\mu,m_\mu})$ such that
\begin{equation}\label{eq.boundarytail}
\nu(V_\mu)\log\#(\cA_{n_\mu,m_\mu})<\frac{\alpha}{8}.
\end{equation}
\end{itemize}

Note that measures $\nu\in U_\mu$ may assign positive weight on the boundary of $\cA_{n_{\nu},m_\mu}$.
When this happens, conditions (b) and (c) are used to control the entropy of $\nu$ near the boundary of $\cA_{n_{\nu},m_\mu}$. We may further assume that $U_{\mu}$ are balls around $\mu$ in the set of probability measures.

As a direct consequence of \eqref{eq.smallboundaryentropy}, we get:
\begin{lemma}\label{l.limitinneighborhood}
For any $\nu\in U_\mu$, and any sequence of measures $\mu_j\to \nu$,
$$\limsup_j \frac{1}{n_\mu}H_{\mu_j}(\cA_{n_\mu,m_\mu}\mid \cA_{0,m_\mu}) + \int \phi^u d\mu_j< -\alpha.$$
\end{lemma}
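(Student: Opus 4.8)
The plan is to deduce Lemma~\ref{l.limitinneighborhood} from condition~(b), i.e.\ inequality~\eqref{eq.smallboundaryentropy}, via a semicontinuity-of-entropy argument. The point is that while $\mu\mapsto H_\mu(\cA_{n_\mu,m_\mu}\mid\cA_{0,m_\mu})$ is \emph{not} continuous in the weak* topology (a limit measure may charge the boundary of the partition elements), one can always bound the limsup of the conditional entropy along a convergent sequence by a quantity expressed in terms of the masses of the \emph{closures} and \emph{interiors} of the partition elements — which is precisely the combination appearing in~\eqref{eq.smallboundaryentropy}.

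\medskip
\noindent\emph{Key steps.} First, recall that the conditional entropy $H_{\mu_j}(\cA_{n_\mu,m_\mu}\mid\cA_{0,m_\mu})$ can be written as $H_{\mu_j}(\cA_{n_\mu,m_\mu}\vee\cA_{0,m_\mu})-H_{\mu_j}(\cA_{0,m_\mu})$, since $\cA_{0,m_\mu}$ is coarser than $\cA_{n_\mu,m_\mu}$ (by the construction in Section~\ref{ss.charts}, $\cA_{n_\mu,m_\mu}$ refines $\cA_{0,m_\mu}$), so in fact $H_{\mu_j}(\cA_{n_\mu,m_\mu}\mid\cA_{0,m_\mu})=H_{\mu_j}(\cA_{n_\mu,m_\mu})-H_{\mu_j}(\cA_{0,m_\mu})$. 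Second, for the refining partition I bound $H_{\mu_j}(\cA_{n_\mu,m_\mu})$ from above: since $\phi(t)=-t\log t$ is continuous and $\mu_j(\Cl(A))\to$ something $\le a_A=\nu(\Cl(A))$ is false in general, instead I use that for any open set $U\supset A$ one has $\limsup_j \mu_j(A)\le\limsup_j\mu_j(\Cl A)\le\nu(\Cl A)=a_A$ by the portmanteau theorem applied to the closed set $\Cl A$, hence $\limsup_j H_{\mu_j}(\cA_{n_\mu,m_\mu})\le -\sum_{A}a_A\log a_A$; but to make the resulting bound match~\eqref{eq.smallboundaryentropy} I need the slightly finer estimate $\limsup_j \phi(\mu_j(A))\le -a_A\log b_A$, which follows because $\phi$ is increasing on $[0,e^{-1}]$ and one estimates $\mu_j(A)$ from above by $a_A$ in the leading factor and from below by $b_A=\nu(\mathrm{int}\,A)$ (using $\liminf_j\mu_j(A)\ge\liminf_j\mu_j(\mathrm{int}\,A)\ge\nu(\mathrm{int}\,A)$) in the logarithm — after shrinking $U_\mu$, if necessary, so that all the masses involved lie in the regime where $\phi$ is monotone, which is guaranteed once $\diam(\cA)$ and $m_\mu$ are large enough. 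Third, for the coarser partition $\cA_{0,m_\mu}$ I need a \emph{lower} bound on $\liminf_j H_{\mu_j}(\cA_{0,m_\mu})$; here I use $\liminf_j\mu_j(B)\ge\nu(\mathrm{int}\,B)=d_B$ together with the corresponding bound, giving $\liminf_j H_{\mu_j}(\cA_{0,m_\mu})\ge \sum_B \phi(\text{something})$, and again choosing the leading/log factors so that the combination $-\sum_B c_B\log d_B$ emerges as an upper bound for $-H_{\mu_j}(\cA_{0,m_\mu})$ in the limit. Fourth, the term $\int\phi^u\,d\mu_j\to\int\phi^u\,d\nu$ by continuity of $\phi^u$. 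Combining the three pieces:
\[
\limsup_j\Big(\tfrac{1}{n_\mu}H_{\mu_j}(\cA_{n_\mu,m_\mu}\mid\cA_{0,m_\mu})+\int\phi^u\,d\mu_j\Big)
\le \tfrac{1}{n_\mu}\Big(-\sum_{A}a_A\log b_A-\sum_{B}c_B\log d_B\Big)+\int\phi^u\,d\nu<-\alpha,
\]
where the last inequality is exactly~\eqref{eq.smallboundaryentropy}.

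\medskip
\noindent\emph{Main obstacle.} The delicate point is bookkeeping the direction of each inequality: the conditional entropy is a \emph{difference} of two entropies, so I must bound the refining one from above and the coarsening one from below, and match each to the right closure/interior mass in~\eqref{eq.smallboundaryentropy}. This forces a careful use of the portmanteau theorem in both directions (open sets for $\liminf$, closed sets for $\limsup$) and a check that $\phi$ is applied only on the interval where it is monotone — which is why $U_\mu$ was chosen to be a sufficiently small ball and why~\eqref{eq.smallboundaryentropy} was built into the definition of $U_\mu$ in the first place. Once the monotonicity regime is secured, the estimate is routine and the lemma follows.
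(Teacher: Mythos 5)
Your proof is correct and fills in the argument that the paper leaves implicit (the paper simply states the lemma as a ``direct consequence'' of condition~(b) with no further justification), and your approach — write the conditional entropy as a difference of finite-partition entropies and apply the portmanteau inequalities to the closures and interiors of the partition elements — is exactly the right one. Two small remarks on your write-up. First, the appeal to monotonicity of $\phi$ on $[0,e^{-1}]$ and the suggestion to further shrink $U_\mu$ or enlarge $\diam(\cA)$, $m_\mu$ is unnecessary: for $t\in[b_A,a_A]$ one has directly $\phi(t)=t(-\log t)\le a_A(-\log b_A)$ because $t\le a_A$, $-\log t\le -\log b_A$, and both factors are nonnegative (since $t\le 1$ and $b_A\le 1$); combined with $\limsup_j\mu_j(A)\le\nu(\Cl A)=a_A$ and $\liminf_j\mu_j(A)\ge\nu(\operatorname{int}A)=b_A$, this yields $\limsup_j\phi(\mu_j(A))\le -a_A\log b_A$ with no monotonicity and no shrinking. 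Second, your treatment of the $\cA_{0,m_\mu}$ term is more elaborate than needed: since $H_{\mu_j}(\cA_{0,m_\mu})\ge 0$ while $-\sum_B c_B\log d_B\ge 0$, the inequality $-H_{\mu_j}(\cA_{0,m_\mu})\le -\sum_B c_B\log d_B$ is immediate and no portmanteau estimate for the coarser partition is required — in fact one obtains the slightly sharper $\limsup_j H_{\mu_j}(\cA_{n_\mu,m_\mu}\mid\cA_{0,m_\mu})\le -\sum_A a_A\log b_A + \sum_B d_B\log c_B$, which some readers may suspect is what~\eqref{eq.smallboundaryentropy} was meant to say, but either version of~\eqref{eq.smallboundaryentropy} yields the lemma by the same argument. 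None of this affects the validity of your proof.
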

Fix a $\vep>0$. The balls $U_{\mu_i}$ described above is also open in the space of invariant measures, and forms an open covering of $\cB_{\vep}$. Since $B_\vep$ is compact, we may take a finite sub-covering $\{U_i = U_{\mu_i}\}_{i=1}^N$. Each $U_i$ is associate with $\cA_i=\cA_{\mu_i}$, $n_i=n_{\mu_i}, m_i=m_{\mu_i}$, and $V_i=V_{\mu_i}$ the boundary of $\cA_{n_i,m_i}$ that satisfies \eqref{eq.boundarytail}.

Recall that $I$ is a disk transverse to the bundle $E^c$. Fix any ball $U_i$ which is associated with $\mu_i\in \cM_{\inv}$. For every $n>0$, denote by
$$
B_{n,i}=\{x \in I: \frac{1}{n}\sum_{j=0}^{n-1} \delta_{f^j(x)}\in U_i\}.
$$
The next theorem states that points in $I$ whose Cesaro average is in $U_i$ has exponentially small measure. For this purpose, denote by $\Leb$ the Lebesgue measure on $I$.

\begin{theorem}\label{t.exponentialtail}
$\limsup \frac{1}{n}\log \Leb(B_{n,i})<-\alpha/4$.
\end{theorem}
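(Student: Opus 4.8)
The plan is to mimic the proof of the variational principle, but working on a single unstable plaque rather than on the whole manifold, and tracking the potential $\phi^u$ which records the expansion in the unstable direction. The starting point is the observation that the Lebesgue measure on the transverse disk $I$, pushed forward under $f^n$ and disintegrated along unstable plaques, has conditional densities whose growth is governed exactly by $\phi^u_n$. Concretely, if $x\in B_{n,i}$, the image $f^n$ of a small neighborhood of $x$ inside its unstable plaque is stretched by a factor comparable to $\exp(-\phi^u_n(x)) = \exp(\sum_{j=0}^{n-1}\log\det(Tf|_{E^u(f^j x)}))$, up to the bounded-distortion error controlled by \eqref{eq.smalldiameter}. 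Thus I expect to bound $\Leb(B_{n,i})$ by a sum over atoms of a suitable refinement of the partition $\cA_{n_i,m_i}$, iterated along the orbit, of terms of the form $\exp(\phi^u_n)$ evaluated on each atom, times the number of such atoms.

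The key steps, in order, are as follows. First, fix $i$, write $N=n_i$, $\cA=\cA_i$, $\cB_0 = \cA_{0,m_i}$ and $\cB = \cA_{N,m_i}$. Cover the orbit segment of length $n$ by $\approx n/N$ blocks of length $N$ and consider the partition $\xi_n = \bigvee_{k=0}^{n/N-1} f^{-kN}(\cB)$, refined by $\cB_0$. Second, for each atom $P$ of $\xi_n$ meeting $B_{n,i}$, use bounded distortion \eqref{eq.smalldiameter} together with \eqref{eq.log2} to estimate $\Leb(P\cap I)$ from above by (something like) $C\exp(\sup_{P}\phi^u_n)$; summing over atoms and using that $\frac1n\sum\delta_{f^j x}$ lies in the convex set $U_i$ for every $x\in B_{n,i}$, convexity lets me replace $\sup_P \phi^u_n$ by $n\int\phi^u\,d\nu$ for a representative measure $\nu\in U_i$, up to the distortion slack. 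Third — this is where Lemma~\ref{l.limitinneighborhood} and condition \eqref{eq.finit}, \eqref{eq.smallboundaryentropy} enter — I need the number of atoms of $\xi_n$ that are actually charged, counted with the right weights, to be at most $\exp(n(\,-\int\phi^u\,d\nu - \alpha + o(1)))$; this is precisely the statement that the ``entropy'' seen along the orbit is at most $n(H_\nu(\cB\mid\cB_0)/N) \le n(-\int\phi^u d\nu - \alpha)$, which one extracts from subadditivity of conditional entropy along the blocks together with \eqref{eq.finit}. Condition \eqref{eq.boundarytail} is used to discard the atoms lying near $\partial\cB$, whose contribution is absorbed into the $\alpha/8$ error terms. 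Combining, $\Leb(B_{n,i}) \le C\exp(n(\int\phi^u d\nu))\cdot\exp(n(-\int\phi^u d\nu - \alpha + \alpha/8 + \alpha/8 + o(1))) \le C\exp(-n\alpha/2)$ for $n$ large, which gives the claimed $\limsup\frac1n\log\Leb(B_{n,i}) < -\alpha/4$.

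The main obstacle, I expect, is the third step: turning the conditional-entropy bound \eqref{eq.finit} into an honest \emph{counting} estimate on the number of significant atoms of the iterated partition, while the reference measure is Lebesgue on $I$ rather than an invariant measure. One has to be careful that the disintegration of $\Leb|_I$ along unstable plaques behaves well under iteration (this is the Gibbs-$u$-state mechanism: the relevant Jacobian is $\exp(-\phi^u_n)$ up to bounded distortion), and that the block decomposition does not lose more than a subexponential factor — which is why $N=n_i$ was chosen with $(\log 2)/n_i < \alpha/8$ in \eqref{eq.log2}. A secondary technical point is the treatment of atoms straddling the boundary of the (non-closed) partition elements: since $\nu\in U_i$ may charge $\partial\cB$, one works with the closed/open versions of the atoms as in \eqref{eq.smallboundaryentropy} and uses \eqref{eq.boundarytail} to control the overlap; making this rigorous requires the standard trick of replacing the partition by its closure/interior and estimating the discrepancy by $\nu(V_i)\log\#\cB$.
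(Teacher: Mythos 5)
Your proposal follows essentially the same route as the paper's proof: reduce to a weighted sum $\sum e^{\phi^u_n}$, use the standard variational-principle identity that $\log\sum e^{\phi^u_n}$ equals entropy plus the time-averaged potential against a weighted empirical measure $\mu_n$, exploit convexity of $U_i$ to place $\mu_n\in U_i$, perform a block decomposition to compare with the finite conditional entropy $H(\cA_{n_i,m_i}\mid\cA_{0,m_i})$, and absorb boundary contributions via \eqref{eq.smallboundaryentropy} and \eqref{eq.boundarytail}. The paper organizes the first step through an $(n,\delta)$-separated set rather than through atoms of $\bigvee f^{-kN}\cB$, but this is cosmetic.

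The one thing you under-specify — and it is in fact the technical heart of the paper's argument — is how to condition along the orbit when the reference measure lives on a transverse disk $I$ that is \emph{not} contained in an $\cF^u$-leaf. You write ``refined by $\cB_0$'' as though the plaque structure were directly available, but the pushed-forward measures $f^k_*\nu_{n,0}$ are supported on $f^k(I)$, which is only \emph{approximately} tangent to $\cF^u$. The paper handles this by introducing the auxiliary partition $\cI_k=\{f^k(I),(f^k(I))^\circ\}$ (trivial for $\nu_{n,k}$, so conditioning on it is free), noting that $\cI_k\vee\bigvee_{i\le k}f^i\cA$ refines the localized partition $\tilde\cI_k$, and then splitting $\tilde\cI_k$ into a ``good'' part (whose elements cross the transverse boundary $\partial^T\cA_{0,m_1}$ and hence are genuinely refined by $\cA_{0,m_1}$ once $k$ is large) and a ``bad'' part controlled by \eqref{eq.boundarytail}. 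Without spelling this out, the inequality $H_{\nu_{n,k}}(\cA_{n_1,m_1}\mid\cdots)\le H_{\nu_{n,k}}(\cA_{n_1,m_1}\mid\cA_{0,m_1}) + (\text{boundary error})$ that your ``counting'' step silently requires does not follow. Your sketch flags this as ``the main obstacle,'' which is the right diagnosis; the missing content is precisely Lemma~\ref{l.touchbound} and the subsequent good/bad decomposition, not just the closure/interior trick on partition atoms.
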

In particular, we have $\sum_{n=0}^\infty\Leb(B_{n,i})<\infty$. By the Borel-Contelli lemma, $\Leb$-almost every $x\in I$ is contained in $B_{n,i}$ for only finitely many $n$. This shows that there exists $\Gamma_i\subset I$ with full Lebesgue measure, such that for every $x\in \Gamma_i$, any limit of  $\frac{1}{n}\sum_{j=0}^{n-1} \delta_{f^j(x)}$ is not contained in $U_i$.  Take $\Gamma=\bigcap_{i=1}^N\Gamma_i$ which still has full Lebesgue measure, we obtain the following theorem:

\begin{theorem}\label{t.convergence}
For every $\vep>0$ and every disk $I$ transverse to $E^{cs}$, there is a full Lebesgue measure subset $\Gamma\subset I$, such that for every $x\in\Gamma$ and any Cesaro limit $\mu$ of the sequence $ \frac{1}{n}\sum_{j=0}^{n-1} \delta_{f^j(x)}$, we have 
$$
d(\mu,\G^u(f)<\vep).
$$ 
\end{theorem}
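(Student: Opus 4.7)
The plan is to prove the exponential-tail estimate Theorem~\ref{t.exponentialtail}; Theorem~\ref{t.convergence} then follows by Borel--Cantelli applied to each $U_i$ and intersecting over the finite cover $\{U_i\}_{i=1}^N$, exactly as described in the paragraph preceding its statement. So the real content is in Theorem~\ref{t.exponentialtail}, for which I would adapt the upper-bound half of Misiurewicz's proof of the variational principle to the $C^1$ partial-entropy setting.

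I would begin with two preparatory reductions. First, since $I$ is transverse to $E^{cs}$, the angle between $Tf^n(T_xI)$ and $E^u(f^n(x))$ contracts exponentially, and a standard distortion argument should yield a constant $C_0$ with $|\log\Jac(f^n|_I)(x) + \phi^u_n(x)| \le C_0$ for all $x \in I$ and $n \ge 1$. Second, I would shrink each $U_i$ about $\mu_i$ (which preserves (a)--(c)) so that $|\int\phi^u\,d\nu - \int\phi^u\,d\mu_i| < \alpha/16$ for every $\nu \in U_i$; for $x \in B_{n,i}$ this pins $\phi^u_n(x)$ inside $n(\int\phi^u\,d\mu_i \pm \alpha/16)$. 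Next, with $N = \lfloor n/n_i\rfloor$, I would form the iterated block partitions
$$\cP_N := \bigvee_{k=0}^{N-1} f^{-kn_i}(\cA_{n_i,m_i}), \qquad \mathcal{Q}_N := \bigvee_{k=0}^{N-1} f^{-kn_i}(\cA_{0,m_i}),$$
so $\cP_N \succ \mathcal{Q}_N$. Because $\cA_{n_i,m_i} \succ \bigvee_{j=0}^{n_i} f^{-j}(\cA)$ and $\diam\cA < \delta$, each atom $P \in \cP_N$ is an $(n,\delta)$-dynamical cube for $\cA$; together with the distortion bound and~\eqref{eq.smalldiameter} this gives $\Leb_I(P \cap I) \le C_1 \exp(\phi^u_n(x) + n\alpha/8) \le C_1 \exp(n\!\int\!\phi^u\,d\mu_i + 3n\alpha/16)$ for any $x \in P \cap I \cap B_{n,i}$, so that
$$\Leb_I(B_{n,i}) \le |\cP_N^\ast| \cdot C_1 \exp\!\Bigl(n\!\int\!\phi^u\,d\mu_i + 3n\alpha/16\Bigr), \quad \cP_N^\ast := \{P \in \cP_N : P \cap B_{n,i} \neq \emptyset\}.$$

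The main obstacle is the combinatorial bound $|\cP_N^\ast| \le \exp(-n\alpha - n\!\int\!\phi^u\,d\mu_i + o(n))$. For each $x \in B_{n,i}$ the empirical $\nu_n^x$ lies in $U_i$; Lemma~\ref{l.limitinneighborhood} plus condition (a) forces the per-block conditional entropy $\tfrac{1}{n_i}H_{\nu_n^x}(\cA_{n_i,m_i}\mid\cA_{0,m_i})$ to sit strictly below $-\alpha - \int\phi^u\,d\mu_i$ in the limit. The hard part is that $\nu_n^x$ is non-invariant and may charge the partition boundaries; conditions (b) and (c) are engineered precisely to sidestep this, the former with a boundary-robust entropy expression in the closures/interiors of atoms and the latter with explicit control on $\nu_n^x(V_i)\log\#(\cA_{n_i,m_i})$. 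Armed with these, a Stirling / typical-sequence count on the $(\cA_{n_i,m_i})^N$-itineraries of atoms in $\cP_N^\ast$, conditioned on their $(\cA_{0,m_i})^N$-coarsenings, should produce the claimed bound. Putting this together with the preceding display yields $\Leb_I(B_{n,i}) \le C_2 \exp(-13n\alpha/16 + o(n))$, and hence $\limsup\tfrac{1}{n}\log\Leb_I(B_{n,i}) < -\alpha/4$, which is Theorem~\ref{t.exponentialtail}; Borel--Cantelli then finishes Theorem~\ref{t.convergence}.
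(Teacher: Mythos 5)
Your high-level structure (reduce to Theorem~\ref{t.exponentialtail}, then apply Borel--Cantelli to each $U_i$ and intersect over the finite cover) is exactly the paper's route, and the preparatory reductions (exponential alignment of $Tf^n(TI)$ with $E^u$, distortion, shrinking $U_i$) are all used in the paper's argument. But for the core estimate you depart from the paper and there is a genuine gap.

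The paper proves the exponential tail by a Misiurewicz-style \emph{weighted} argument: it fixes an $(n,\delta)$-separated set $E_n\subset B_{n,i}$, forms the weighted atomic measure $\nu_{n,0}\propto\sum_{x\in E_n}e^{\phi^u_n(x)}\delta_x$ and its Cesaro average $\mu_n=\frac1n\sum_k f^k_*\nu_{n,0}\in U_i$ (convexity), and uses the identity
$\log\sum_{x\in E_n}e^{\phi^u_n(x)}=H_{\nu_{n,0}}(\bigvee_{j<n}f^{-j}\cA)+n\int\phi^u\,d\mu_n$
together with a block decomposition \emph{summed over all $n_i$ phase shifts} and conditioned on the iterated disk $\cI_k$. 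You instead propose a pure Katok/Bowen count: bound $\Leb_I(B_{n,i})$ by $|\cP_N^\ast|\cdot\max_P\Leb_I(P\cap I)$, with the cardinality bound $|\cP_N^\ast|\le\exp(-n\alpha-n\int\phi^u\,d\mu_i+o(n))$ ``by a Stirling / typical-sequence count conditioned on the coarsenings.'' That cardinality bound is the crux, and as stated it does not follow. A type count of fine-given-coarse itineraries yields at most $\exp\bigl(N\sup_{\nu\in U_i}H_\nu(\cA_{n_i,m_i}\mid\cA_{0,m_i})\bigr)$ \emph{per coarse itinerary}; to bound $|\cP_N^\ast|$ you still have to sum over coarse $(\cA_{0,m_i})^N$-itineraries, and that count is of order $(\#\cA_{0,m_i})^N=e^{cn}$ with $c$ not small. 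In other words, an unweighted count of atoms intersecting $I$ naturally controls the \emph{unconditional} type entropy $H(\cA_{n_i,m_i})$, not the \emph{conditional} one that condition (a) bounds, and there is no lower bound on $\Leb_I(P\cap I)$ (distortion in $C^1$ is unbounded) that would let you invert $\Leb_I(I)\ge\sum_P\Leb_I(P\cap I)$ into a cardinality bound. The weighted measure $\nu_{n,0}$ in the paper is precisely the device that makes the coarse direction drop out: conditioning on $\cI_k\vee\bigvee f^i\cA$, which is trivial for the measure supported on the iterated disk, is what replaces the uncontrolled coarse sum.

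Two further points you would need even if the count were repaired. First, your $\cP_N$ samples the orbit only at block-times $0,n_i,\dots,(N-1)n_i$, so the relevant empirical type is $\frac1N\sum_k\delta_{f^{kn_i}(x)}$, which is \emph{not} the measure guaranteed to lie in $U_i$; the paper recovers the full empirical measure $\mu_n$ by averaging the block decomposition over all shifts $j=0,\dots,n_i-1$ before invoking Lemma~\ref{l.limitinneighborhood}, a step your sketch omits. Second, you mention conditions (b) and (c) as boundary insurance, but the actual mechanism in the paper is geometric, not purely measure-theoretic: the partition $\tilde\cI_k$ induced on $f^k(I)$ is split into a ``good'' part where $\tilde\cI_{k,g}\succ\cA_{0,m_i}$ holds and a ``bad'' part confined to $\partial^T_\rho\cA_{0,m_i}\subset V_i$, and one must track separately the measures $\nu_{n,k,g}$, $\nu_{n,k,b}$ and the portions $g_{n,k}$, $1-g_{n,k}$ to feed them back through condition (c) and Lemma~\ref{l.limitinneighborhood}. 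This is the most technical part of the paper's Lemma~\ref{l.exponentialtail} and cannot be waved away.
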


\begin{proof}[Proof of Theorem~\ref{t.exponentialtail}]
For simplicity, we will only consider the case $i=1$ from now on. Consider $\cA=\cA_1$ the corresponding finite partition, denote
by $\cA^u$ the local intersection of $\cA$ with unstable leaves as describe in Section~\ref{ss.charts}. 

The proof is motivated by the proof of the variational principle. For every $\epsilon>0$ and each $n>0$,
we consider the $(n,\delta)$-separated set $E_n$ of $B_{n,1}$, such that
$$\sum_{x\in E_n}e^{\phi^u_n(x)}\geq\sup_{E\text{ is a }(n, \delta)\text{-separated set }}\sum_{x\in E}e^{\phi^u_n(x)}-\epsilon.$$

We need the following lemma, whose proof is in the next subsection.
\begin{lemma}\label{l.exponentialtail}
$\limsup\frac{1}{n}\log \sum_{x\in E_n}e^{\phi^u_n(x)}<-\alpha/2$.
\end{lemma}
Let us continue the proof. Denote by $B_n(x,\delta)$ the $(n,\delta)$-Bowen ball of $x$. By
definition, $\cup_{x\in E_n} B_n(x,\delta)$ covers $B_{n,1}$.

Recall that  $I$ is a disk that is  transverse to the $E^{cs}$ bundle. By the dominated splitting between $E^{cs}$ and $E^u$, after finite steps of iteration by $f$,
its tangent bundle will be almost tangent to the bundle $E^u$. Replacing $I$ by its iteration,
we may assume that the tangent bundle of $I$ is almost tangent to $E^u$ bundle. By \eqref{eq.smalldiameter},
for any $x\in I$ and $y\in B_n(x,\delta)$, we have
\begin{equation}
|\log \det(Tf^n\mid_{T_xI})-\log \det(Tf^n\mid_{T_yI})|\leq n\alpha/4.
\end{equation}

Then we have  $$\Leb(B_n(x,\delta))<e^{n\alpha/4}\det(Tf^{-n}\mid_{TI_x})\Leb(f^n(B_n(x,\delta))).$$
Since $f^n(B_n(x,\delta))$ is contained in the ball with center $f^n(x)$ and
radius $\delta$, the Lebesgue measure of  $f^n(B_n(x,\delta))$ must be uniformly bounded:  there is $K>0$  independent of $x$ and $n$, such that
$$\Leb(f^n(B_n(x,\delta)))\leq K.$$
Thus,
\begin{equation}
\Leb(B_n(x,\delta))<Ke^{n\alpha/4}\det(Tf^{-n}\mid_{TI_x})=Ke^{n\alpha/4}e^{\phi^u_n(x)}.
\end{equation}
Sum over $x\in E_n$, we obtain
\begin{equation*}
\Leb(B_{n,1})\leq \sum_{x\in E_n}\Leb(B_n(x,\delta))\leq Ke^{n\alpha/4}\sum_{x\in E_n} e^{\phi^u_n(x)}.
\end{equation*}
By Lemma~\ref{l.exponentialtail}, we obtain:
$$\limsup \frac{1}{n}\log \Leb(B_{n,1})\leq \alpha/4-\frac{1}{2}\alpha\leq -\alpha/4.$$
The proof is complete.
\end{proof}
\subsection{Proof of Lemma~\ref{l.exponentialtail}}
\begin{proof}
Denote by
$$\nu_{n,0}=\frac{1}{\sum_{x\in E_n}e^{\phi^u_n(x)}}\sum_{x\in E_n} e^{\phi^u_n(x)} \delta_x,$$
and $f^k_*(\nu_{n,0})=\nu_{n,k}$.
Then 
$$\mu_n=\frac{1}{n}\sum_{i=0}^{n-1}\nu_{n,i}$$
is a convex combination of measures of the form $\frac{1}{n}\sum_{j=0}^{n-1} \delta_{f^j(x)}$ with  $x\in E_n \subset B_{n,1}$. Since $U_1$ is convex, we have 
$$\mu_n \in U_1.$$

Because $diam(\cA)\leq \delta$, every element of $\bigvee_{i=0}^{n-1} f^{-i}\cA$ contains at most one element
of $E_n$, we have
\begin{equation}\label{eq.pressure}
\log \sum_{x\in E_n}e^{\phi^u_n(x)}=H_{\nu_{n,0}}(\bigvee_{i=0}^{n-1}f^{-i}\cA )+\int \phi^u_n d\nu_{n,0}=H_{\nu_{n,0}}(\bigvee_{i=0}^{n-1}f^{-i}\cA )+n\int \phi^u d\mu_n.
\end{equation}
Denote partition $\cI_k=\{f^k(I),(f^k(I))^\circ\}$. Because $\nu_{n,k}=f^k_*(\nu_{n,0})$ is supported on $f^k(I)$, $\cI_k$ is
the trivial partition for $\nu_{n,k}$ in the sense that every element has measure $0$ or $1$. In particular,
$$H_{\nu_{n,0}}(\bigvee_{i=0}^{n-1}f^{-i}\cA )=H_{\nu_{n,0}}(\bigvee_{i=0}^{n-1}f^{-i}\cA\mid\cI_0).$$

Similar to the proof of the variational principle, for every $j=0,\cdots, n_1-1$, write $a(j)=[(n-j)/n_1]$. Fix $0\leq j \leq n_1-1$. We have
$$\bigvee_{i=0}^{n-1}f^{-i}\cA=\bigvee_{i=0}^{j-1}f^{-i}\cA \vee \bigvee_{r=0}^{a(j)-1}f^{-(rn_1+j)}(\bigvee_{i=0}^{n_1-1} f^{-i}\cA) \vee \bigvee_{i=a(j)n_1}^{n-1}f^{-i}\cA$$
and the `head' and the `tail' given by $S=\{0,\cdots, j-1\}\cup\{a(j)n_1,\cdots, n-1\}$ has cardinality at most $2n_1$.
Therefore,
\begin{align*}
H_{\nu_{n,0}}(\bigvee_{i=0}^{n-1}f^{-i}\cA\mid \cI_0)&= H_{\nu_{n,0}}(\bigvee_{i=0}^{j-1}f^{-i}\cA\mid \cI_0)\\
&+ \sum_{r=0}^{a(j)-1}H_{f^{rn_1+j}_*\nu_{n,0}}(\bigvee_{i=0}^{n_1-1}f^{-i}\cA\mid \cI_{rn_1+j} \bigvee_{i=1}^{rn_1+j}f^i\cA )\\
&+H_{f^{a(j)n_1+j}_*(\nu_{n,0})}(\bigvee_{i=a(j)n_1}^{n-1}f^{-i}\cA\mid \cI_{a(j)n_1+j}\vee \bigvee_{i=1}^{a(j)n_1+j}f^{-i}\cA).\\
&=I+II+III,
\end{align*}
where $I+III\leq H_{\nu_{n,0}}(\bigvee_{i=1}^{j-1}f^{-i}\cA)+H_{\nu_{n,f^{a(j)n_1+j}}}(\bigvee_{i=a(j)n_1}^{n-1}f^{-i}\cA)\leq 2n_1\log \#(\cA)$.

Summing over $j=0,\cdots, n_1-1$, we obtain
\begin{align*}
n_1H_{\nu_{n,0}}(\bigvee_{j=0}^{n-1}\cA\mid \cI_0)&\leq 2n_1^2 \log\#A\\\numberthis \label{eq.19}
 &\;\;\;\;+\sum_{k=0}^{n-1}H_{\nu_{n,k}}(\bigvee_{i=0}^{n_1-1}f^{-i}\cA\mid \cI_k\vee \bigvee_{i=1}^{k}f^i\cA ).\\
\end{align*}

From now on, we estimate
\begin{align*}
H_{\nu_{n,k}}(\bigvee_{i=0}^{n_1-1}f^{-i}\cA\mid \cI_k\vee \bigvee_{i=1}^{k}f^i\cA )&=H_{f^{-1}_*\nu_{n,k}}(\bigvee_{i=1}^{n_1}f^{-i}\cA\mid \cI_{k-1}\vee \bigvee_{i=0}^{k-1}f^i\cA ).
\end{align*}
Because for partitions $\cC\prec \cA$, $H_\mu(\cB\mid \cA)=H_\mu (\cB\bigvee \cC\mid \cA)$, we have
\begin{align*}
H_{\nu_{n,k}}(\bigvee_{i=0}^{n_1-1}f^{-i}\cA\mid \cI_k\vee \bigvee_{i=1}^{k}f^i\cA )&=H_{\nu_{n,k-1}}(\bigvee_{i=1}^{n_1}f^{-i}\cA \vee \cA \mid \cI_{k-1}\vee \bigvee_{i=0}^{k-1}f^i\cA )\\
&=H_{\nu_{n,k-1}}(\bigvee_{i=0}^{n_1}f^{-i}\cA \mid \cI_{k-1}\vee \bigvee_{i=0}^{k-1}f^i\cA ).
\end{align*}

Note that the diameter of $f^k(I)\in\cI_k$ becomes unbounded for $k$ large. Denote by $\tilde{\cI}_k$  the partition induced by the intersection of elements of $\cI_k$ with the foliation box (thus cutting $f^k(I)$ into `local leaves')  and with
elements of $\cA$. Recall that we take the partition $\cA$ to have diameter less than the Lebesgue number of the covering by foliation boxes, thus every element of $\cA$ is contained in a foliation box. This gives the following  lemma, whose proof can be found in~\cite[Lemma 4.2]{Y16}.
\begin{lemma}
$\cI_k\vee \bigvee_{i=0}^{k}f^i\cA \succ \tilde{\cI}_k$.
\end{lemma}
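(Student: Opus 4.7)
My plan for proving $\cI_k \vee \bigvee_{i=0}^{k} f^i\cA \succ \tilde{\cI}_k$ is to take an arbitrary atom $C$ of the left partition and exhibit a single atom of $\tilde{\cI}_k$ that contains it. Since $\cI_k = \{f^k(I), (f^k(I))^{c}\}$ appears in both partitions, the case $C \subset (f^k(I))^{c}$ is immediate. The substance is in the case $C \subset f^k(I)$: here $C$ has the form $f^k(I) \cap \bigcap_{i=0}^{k} f^i A_i$ for some $A_0,\ldots,A_k \in \cA$, and I need $C$ to be contained in the intersection of a single plaque of $f^k(I)$ inside a foliation box with a single element of $\cA$.

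The plan proceeds in three steps. First, the $i=0$ factor of the join gives immediately $C \subset A_0$, which handles the $\cA$-part of the target atom. Second, because $\diam\cA$ is less than the Lebesgue number of the foliation-box cover, each element $A \in \cA$ sits inside some foliation box $B_{i(A)}$; in particular $C \subset A_0 \subset B_{i(A_0)}$, which fixes the relevant foliation box. Third, and most importantly, I must show that $C$ lies in a single plaque of $f^k(I)$ inside $B_{i(A_0)}$. For this I would pick any $z,w \in C$, let $P_z$ be the plaque of $f^k(I) \cap B_{i(A_0)}$ through $z$, and pull $P_z$ back by successive applications of $f^{-1}$. At step $j$, since $f^{-j}(z) \in A_j \subset B_{i(A_j)}$ and $f^{-1}$ strictly contracts along $E^u$, the pullback $f^{-j}(P_z)$ stays inside a single plaque of $f^{k-j}(I)$ inside $B_{i(A_j)}$. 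After $k$ steps, $f^{-k}(P_z)$ is a plaque of $I$ inside $B_{i(A_k)}$ through $f^{-k}(z)$. The same construction applied to $w$ produces a plaque of $I$ inside $B_{i(A_k)}$ through $f^{-k}(w)$, and because both pre-images lie in the common small element $A_k$ on $I$, the two pullback plaques of $I$ coincide; pushing forward by $f^k$ yields $P_z = P_w$, which concludes the plaque identity.

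I expect the main obstacle to be precisely the inductive pullback step: guaranteeing that $f^{-1}(P_z)$ is genuinely confined to a single plaque of $f^{k-1}(I)$ in $B_{i(A_1)}$, rather than spanning several. This requires the smallness of $\diam \cA$ coupled with the $E^u$-contraction of $f^{-1}$, together with the fact that a plaque of $f^k(I)$ inside a foliation box has bounded diameter in $M$. The clean implementation of this induction is precisely the content of \cite[Lemma 4.2]{Y16}, carried out in the same foliation-chart setup as Section~\ref{ss.charts}.
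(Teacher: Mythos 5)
The paper does not give its own proof of this lemma: it defers entirely to~\cite[Lemma 4.2]{Y16}, so there is no internal argument to compare against. Your decomposition is sensible — the $i=0$ factor pins down the $\cA$-atom $A_0$ and hence the box $B_{i(A_0)}$, and the substance is indeed to show that the atom of the join lies in a single plaque of $f^k(I)\cap B_{i(A_0)}$ — and you correctly flag the inductive pullback as the delicate step.

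However, the final deduction has a genuine gap, and it is not the one you flagged. Even granting the inductive claim, $f^{-k}(P_z)$ and $f^{-k}(P_w)$ are forward-contracted images of $P_z$ and $P_w$, hence tiny subsets of $I$, not full plaques of $I\cap B_{i(A_k)}$. In fact, if $P_z\neq P_w$ then $P_z\cap P_w=\emptyset$ and therefore $f^{-k}(P_z)\cap f^{-k}(P_w)=\emptyset$: the pullbacks can both sit inside the common plaque of $I$ through $A_k$ and still be disjoint, so ``the two pullback plaques of $I$ coincide'' does not follow, and pushing the common plaque of $I$ forward by $f^k$ produces a set far larger than either $P_z$ or $P_w$, from which $P_z=P_w$ cannot be read off. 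The argument is running in the wrong direction: pulling back loses, rather than recovers, the identity of the original plaque. A workable route is a contradiction argument that the statement implicitly invites: if $z,w$ lay in distinct plaques of $f^k(I)\cap B_{i(A_0)}$ then any arc in $f^k(I)$ joining them exits $B_{i(A_0)}$, so the intrinsic distance $d_{f^k(I)}(z,w)$ is bounded below by a constant of the order of the Lebesgue number, while $d_{I}(f^{-k}z,f^{-k}w)$ is at most the (small) intrinsic diameter of $I$; since the intrinsic distance changes by a bounded factor under a single application of $f^{\pm1}$, there is an intermediate backward time $j^{*}$ at which this distance is comparable to $\diam\cA$, and the almost-tangency of $f^{k-j^{*}}(I)$ to $E^u$ then forces $f^{-j^{*}}(z)$ and $f^{-j^{*}}(w)$ into different atoms of $\cA$, contradicting the hypothesis that $z,w$ share the same atom of $\bigvee_{i=0}^{k}f^{i}\cA$.
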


\subsubsection{An easy case: $I\subset \cF^u$}
When the disk $I$ is taken to be part of a local $\cF^u$ leaf, the proof is easy. By the previous lemma, we have:

\begin{align*}
&H_{\nu_{n,k}}(\bigvee_{i=0}^{n_1}f^{-i}\cA\mid \cI_k \vee \bigvee_{i=0}^{k-1}f^i\cA )\leq H_{\nu_{n,k}}(\bigvee_{i=0}^{n_1}f^{-i}\cA\mid \tilde{\cI}_k)\\
&\leq H_{\nu_{n,k}}(\bigvee_{i=0}^{n_1}f^{-i}\cA\mid \cA_{0,m_1})\\
&\le H_{\nu_{n,k}}(\cA_{n_1,m_1}\mid \cA_{0,m_1}).
\end{align*}
Thus~\eqref{eq.19} becomes:
\begin{align*}
&n_1H_{\nu_{n,0}}(\bigvee_{j=0}^{n-1}\cA\mid \cI) \\\numberthis\label{eq.20}
&\leq2n_1^2\log \#(\cA)+\sum_{k=0}^{n-1} H_{\nu_{n,k}}(\cA_{n_1,m_1}\mid \cA_{0,m_1}).
\end{align*}
Divide both sides by $nn_1$ and combine this with~\eqref{eq.pressure}. Note that $\frac{n_1}{n}\log\#\cA\to0$ as $n\to\infty$, we have
\begin{align*}
&\limsup\frac1n\log \sum_{x\in E_n}e^{\phi^u_n(x)}\\=&\limsup\left(\frac1nH_{\nu_{n,0}}(\bigvee_{i=0}^{n-1}f^{-i}\cA )+\int \phi^u d\mu_n\right)\\
\leq& \limsup_n \Bigg(\frac{1}{n_1}H_{\mu_n}(\cA_{n_1,m_1}\mid \cA_{0,m_1}) + \frac{2n_1}{n}\log\#\cA+\int \phi^u d\mu_n\Bigg)\\
<& -\alpha,
\end{align*}
where the last step is given by Lemma~\ref{l.limitinneighborhood}. This finishes the proof of Lemma~\ref{l.exponentialtail} when $I$ is contained in a local $\cF^u$ leaf.
\subsubsection{General case: $I$ is not contained in a local $\cF^u$ leaf}
In this case, elements of $\cI_k$ will have non-trivial intersection with the boundary of $\cA_{n_1,m_1}$, which are local $\cF^u$ leaves. However, if $k$ is large enough, elements of $\cI_k$ will be `almost tangent' with $\cF^u$ leaves. In particular, when a non-trivial intersection occurs, the corresponding element of $\cI_k$ must be contained in a small neighborhood of the boundary of $\cA_{n_1,m_1}$, which has small entropy.

For this purpose, we denote  by $\partial^T\cA_{0,m_1},\partial^T\cA_{n_1,m_1}$ the transverse boundary of the partitions $\cA_{0,m_1}$ and $\cA_{n_1,m_1}$, i.e.,
the intersections of elements in $\cA$ and $\cA_{n_1}$  with $\cup_{x\in \partial \overline{C}}\cF^u_{loc}(x)$, where
$\overline{C}$ is an element of $\cC_m$. For any $\rho>0$, we also consider the
$\rho$-transverse neighborhood of $\cA_{0,m_1}$ and $\cA_{n_1,m_1}$, which we denote by
$\partial_{\rho}^T\cA_{0,m_1}$ and $\partial_{\rho}^T\cA_{n_1,m_1}$, as the intersection of elements $\cA$ and $\cA_{n_1}$
with $\cup_{x\in \partial_\rho \overline{C}}\cF^u_{loc}(x)$, where $\partial_{\rho} \overline{C}$ is the
$\rho$ neighborhood of the boundary of $\overline{C}$. In other words, $\partial_{\rho}^TA_{0,m_1}\in\partial_{\rho}^T\cA_{0,m_1}$ can be seen as the $\rho$-neighborhood of $\partial^TA_{0,m_1}\in\partial^T\cA_{0,m_1}$, and the same can be said about $\partial_{\rho}^T\cA_{n_1,m_1}$.

When $\rho$ is sufficiently small,
\begin{equation}\label{eq.smalltransverse}
\partial^TA_{0,m_1}\subset \partial A_{0,m_1},\partial^TA_{n_1,m_1}\subset \partial A_{n_1,m_1} \text{ and } \partial^T_{\rho}A_{0,m_1},\partial^T_{\rho}A_{n_1,m_1}\subset V_1,
\end{equation}
where $V_1$ is the neighborhood of $\partial(\cA_{n_1,m_1})$ given by~\eqref{eq.boundarytail}.

Because $f^k(I_k)$ approaches uniformly to the unstable foliation, there is $k_1>0$ such that for any $k>k_1$,
any element $I_k$ of the partition $\cI_k$ and any element $A_{0,m_1}$  of partition $\cA_{0,m_1}$, if $I_k$ has non-empty intersection
with $A_{0,m_1}$ outside $\partial^T_{\rho}A_{0,m_1}$, then
$I_k\subset A_{0,m_1}$.

Thus the partition $\tilde{\cI}_k$ induces two sub-partitions: the `good' part $\tilde{\cI}_{k,g}$ consists of the elements of $\tilde{\cI}_k$
which is not complete contained in an element of $\partial^T_{\rho}\cA_{0,m_1}$ (i.e., those that do not intersects with the boundary), and the `bad' part$\tilde{\cI}_{k,b}$ the elements of $\tilde{\cI}_k$ that is contained in elements of  $\partial^T_{\rho}\cA_{0,m_1}$. By the  discussion above, we have
\begin{lemma}\label{l.touchbound}
For $k>k_1$, $\tilde{\cI}_{k,g}\succ \cA_{0,m_1}$ outside $\partial^T_{\rho}\cA_{0,m_1}$.
\end{lemma}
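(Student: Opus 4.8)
The plan is to obtain Lemma~\ref{l.touchbound} as a direct formal consequence of the geometric fact recorded just above it — that for $k>k_1$ any element of $\tilde{\cI}_k$ meeting an element $A_{0,m_1}$ of $\cA_{0,m_1}$ outside $\partial^T_\rho A_{0,m_1}$ is entirely contained in $A_{0,m_1}$ — so the first task is to make that fact precise and the second is a short combinatorial deduction. For the geometric fact one argues inside a foliation box. By the domination $E^{cs}\oplus E^u$ and the cone-field estimate already invoked above, the tangent planes $T_xf^k(I)$ tend to $E^u_{f^k(x)}$ uniformly in $x\in I$ as $k\to\infty$. Fix a box $(B_i,\Phi_i,D_i)$ with coordinates $(t,y)\in D^{d-l}\times D^l$ in which the $\cF^u$-plaques are the slices $\{t\}\times D^l$; then $C_{m_1}\cap B_i$ is a ``vertical'' sub-box $\overline{C}_{m_1}\times D^l$ and $\partial^T A_{0,m_1}$ corresponds to $A\cap(\partial\overline{C}_{m_1}\times D^l)$. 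An element $J$ of $\tilde{\cI}_k$ inside $B_i$ is contained in a single element of $\cA$, so $\diam(J)<\delta$, and for $k$ large it is a graph $y\mapsto(g(y),y)$ over the plaque direction with $\|Dg\|$ as small as we wish uniformly; hence the $t$-coordinate varies by at most $\|Dg\|\cdot\delta$ over $J$. Choosing $k_1$ so that this quantity is $<\rho$ for all $k>k_1$ — uniformly over the finitely many boxes, the finitely many elements of $\cA$, and the fixed finite partition $\overline{\cC}_{m_1}$ — we get: if some point of $J$ has $t$-coordinate more than $\rho$ from $\partial\overline{C}_{m_1}$, then the whole of $J$ has $t$-coordinate in the same element $\overline{C}_{m_1}$, i.e. $J\subset A\cap C_{m_1}=A_{0,m_1}$.

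Granting this, the lemma follows quickly. For $k>k_1$ let $J\in\tilde{\cI}_{k,g}$, so $J$ is not entirely contained in any element of $\partial^T_\rho\cA_{0,m_1}$. The claim is that $J$ lies in a single element of $\cA_{0,m_1}$. If not, $J$ meets two distinct elements; but then for every $z\in J$ the geometric fact forbids $z$ from lying more than $\rho$ (transversally) from the boundary of its own cell, so every point of $J$ is within $\rho$ of such a boundary, and — again using that the $t$-oscillation of $J$ is small — all these points lie within the $\rho$-transverse neighborhood of one and the same element of $\cA_{0,m_1}$, contradicting $J\in\tilde{\cI}_{k,g}$ (for this one should shrink the threshold in the choice of $k_1$, say to $\rho/2$, to absorb the oscillation term). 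Hence each good $J$ sits inside a single element of $\cA_{0,m_1}$, which gives $\tilde{\cI}_{k,g}\succ\cA_{0,m_1}$, in particular outside $\partial^T_\rho\cA_{0,m_1}$.

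The step expected to carry the real content is the geometric fact and, with it, the uniformity of the choice of $k_1$: the estimate $\|Dg\|\cdot\delta<\rho$ (or $<\rho/2$) must hold simultaneously for all the finitely many foliation boxes, elements of $\cA$, and elements of $\overline{\cC}_{m_1}$, and one must also check that the chart metric and the Riemannian metric are uniformly comparable on the relatively compact boxes, so that ``$t$-distance $>\rho$'' matches ``outside $\partial^T_\rho$'' after rescaling $\rho$ by a fixed constant. Since $m_1$ is already fixed and every collection in sight is finite, a single $k_1$ works; the lemma itself is then a bookkeeping exercise on top of the foliation-chart constructions of Section~\ref{ss.charts} and the dominated splitting.
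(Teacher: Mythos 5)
Your proof is correct and follows the same approach as the paper: the lemma is deduced from the geometric fact — stated in the paper in the paragraph immediately preceding it, without a detailed proof — that for $k>k_1$ any element of $\tilde{\cI}_k$ meeting a cell of $\cA_{0,m_1}$ outside its $\rho$-transverse collar must be entirely contained in that cell. You supply both the cone-field/graph argument justifying that fact and a (slightly more roundabout but valid) formal deduction of the refinement statement from it.
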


Since the measure $\nu_{n,k}$ is supported on $f^k(I)$, we can take $\tilde{I}_{k,g}$ the union of elements of $\tilde{\cI}_{k,g}$ and write $\nu_{n,k}=g_{n,k}\nu_{n,k,g}+(1-g_{n,k})\nu_{n,k,b}$,
where $g_{n,k}=\nu_{n,k}(\tilde{I}_{k,g})$ close to $1$ is the portion of the `good' part of $\cI_k$, and $\nu_{n,k,g} = \nu_{n,k}|_{\tilde{I}_{k,g}}$. 
Then from the definition, we have

\begin{lemma}
For $k>k_1$, and any element $A_{n_1,m_1}$  of $\cA_{n_1,m_1}$,
$$g_{n,k}\nu_{n,k,g}(A_{n_1,m_1}\setminus \partial^T_{\rho}A_{n_1,m_1})=\nu_{n,k}(A_{n_1,m_1}\setminus \partial^T_{\rho}A_{n_1,m_1}).$$
\end{lemma}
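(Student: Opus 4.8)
The plan is to turn the asserted identity into an elementary set-theoretic disjointness, and then read that disjointness off the definition of the good/bad decomposition together with the structure of the foliation charts. Since $\nu_{n,k,g}$ is the normalized restriction of $\nu_{n,k}$ to $\tilde{I}_{k,g}$, one has $g_{n,k}\nu_{n,k,g}=\nu_{n,k}|_{\tilde{I}_{k,g}}$, and since $\nu_{n,k}$ is carried by $f^k(I)=\tilde{I}_{k,g}\sqcup\tilde{I}_{k,b}$, for every Borel set $S$,
\[
\nu_{n,k}(S)=g_{n,k}\nu_{n,k,g}(S)+\nu_{n,k}\big(\tilde{I}_{k,b}\cap S\big).
\]
Hence the lemma is equivalent to showing that $\tilde{I}_{k,b}\cap S=\emptyset$ for $S=A_{n_1,m_1}\setminus\partial^T_\rho A_{n_1,m_1}$, and that is what I would prove.

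I would establish this via two elementary inclusions about transverse boundaries. First, since $\cA_{n_1,m_1}\succ\cA_{0,m_1}$ by \eqref{eq.oneside}, the element $A_{n_1,m_1}$ is contained in a unique element $A'_{0,m_1}$ of $\cA_{0,m_1}$, and both are cut from the \emph{same} base partition $\overline{\cC}_{m_1}$ and the same foliation box, so passing from $\cA_{0,m_1}$ to $\cA_{n_1,m_1}$ only subdivides along the plaque direction; unwinding the definitions of Section~\ref{ss.charts} this gives $\partial^T_\rho A'_{0,m_1}\cap A_{n_1,m_1}\subset\partial^T_\rho A_{n_1,m_1}$, hence
\[
A_{n_1,m_1}\setminus\partial^T_\rho A_{n_1,m_1}\ \subset\ A'_{0,m_1}\setminus\partial^T_\rho A'_{0,m_1}.
\]
Second, for \emph{any} two elements $A_{0,m_1},A''_{0,m_1}$ of $\cA_{0,m_1}$ I claim $\partial^T_\rho A_{0,m_1}\cap A''_{0,m_1}\subset\partial^T_\rho A''_{0,m_1}$: a point of the left-hand side lies on a local unstable plaque sitting over a base point within distance $\rho$ of the boundary of the base element of $A_{0,m_1}$; lying also in $A''_{0,m_1}$ forces the two $\cA$-factors, hence the two foliation boxes, to coincide, so by disjointness of distinct plaques that same base point lies over the base element of $A''_{0,m_1}$; and a base point of one element lying within $\rho$ of a neighbouring base element automatically lies within $\rho$ of its own boundary, so the point belongs to $\partial^T_\rho A''_{0,m_1}$. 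In particular $\partial^T_\rho A_{0,m_1}$ is disjoint from $A''_{0,m_1}\setminus\partial^T_\rho A''_{0,m_1}$.

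Combining the two inclusions: any bad element $\tilde{I}\in\tilde{\cI}_{k,b}$ is by definition contained in some $\partial^T_\rho A_{0,m_1}$, which by the second inclusion misses $A'_{0,m_1}\setminus\partial^T_\rho A'_{0,m_1}$, hence by the first misses $A_{n_1,m_1}\setminus\partial^T_\rho A_{n_1,m_1}$; taking the union over all bad elements gives $\tilde{I}_{k,b}\cap\big(A_{n_1,m_1}\setminus\partial^T_\rho A_{n_1,m_1}\big)=\emptyset$, and the lemma follows. I expect the only delicate part — a minor one — to be the bookkeeping in these two inclusions, i.e.\ tracking precisely how a partition element's transverse boundary behaves under the refinement $\cA_{0,m_1}\to\cA_{n_1,m_1}$ and under passing to neighbouring elements; given the chart structure of Section~\ref{ss.charts} and the fact that $\nu_{n,k}$ is supported on $f^k(I)$, everything else is immediate.
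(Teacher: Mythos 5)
Your proof is correct and is precisely the unwinding of the definitions that the paper leaves implicit — the paper states this lemma with no further justification than ``from the definition.'' The reduction to the set-theoretic disjointness of $\tilde{I}_{k,b}$ from $A_{n_1,m_1}\setminus\partial^T_{\rho}A_{n_1,m_1}$ (via $g_{n,k}\nu_{n,k,g}$ being the unnormalized restriction of $\nu_{n,k}$ to the good part), together with the two chart-level inclusions (transverse boundaries are compatible under the refinement $\cA_{0,m_1}\to\cA_{n_1,m_1}$ since both are cut by the same $\overline{\cC}_{m_1}$ over the same foliation box, and a $\rho$-transverse-neighbourhood of one base cell meets a neighbouring cell only inside the latter's own $\rho$-transverse-neighbourhood), is exactly the intended argument.
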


Then for $k>k_1$:
\begin{align*}
&H_{\nu_{n,k}}(\bigvee_{i=0}^{n_1}f^{-i}\cA\mid \cI_k \vee \bigvee_{i=0}^{k-1}f^i\cA )\leq H_{\nu_{n,k}}(\bigvee_{i=0}^{n_1}f^{-i}\cA\mid \tilde{\cI}_k)\\
&\leq g_{n,k} H_{\nu_{n,k,g}}(\bigvee_{i=0}^{n_1}f^{-i}\cA\mid \tilde{\cI}_{k,g})
+(1-g_{n,k})H_{\nu_{n,k,g}}(\bigvee_{i=0}^{n_1}f^{-i}\cA\mid \tilde{\cI}_{k,b})+ \log 2\\
&\leq g_{n,k} H_{\nu_{n,k,g}}(\bigvee_{i=0}^{n_1}f^{-i}\cA\mid \cA_{0,m_1})
+(1-g_{n,k})n_1\log \#(\cA)+ \log 2\\
&= g_{n,k} H_{\nu_{n,k,g}}(\cA_{n_1,m_1}\mid \cA_{0,m_1})+(1-g_{n,k}) n_1\log \#(\cA)+ \log 2.
\end{align*}

Thus
\begin{align*}
&n_1H_{\nu_{n,0}}(\bigvee_{j=0}^{n-1}\cA\mid \cI) \leq\\
&2n_1^2\log \#(\cA)+ n\log 2+\sum_{k=0}^{n-n_1-1}(1-g_k)n_1\log \#(\cA)\\
&+\sum_{k\leq k_1} H_{\nu_{k,g}}(\cA_{n_1,m_1}\mid \cA_{0,m_1})+\sum_{k=0}^{n-n_1-1}g_{n,k} H_{\nu_{n,k,g}}(\cA_{n_1,m_1}\mid \cA_{0,m_1})\\
&\leq 2n_1^2\log \#(\cA)+ n\log 2+k_1 \log \#\cA_{n_1,m_1}\\
& +\sum_{k=0}^{n-n_1-1}(1-g_k)n_1\log \#(\cA)+\sum_{k=0}^{n-n_1-1}g_{n,k} H_{\nu_{n,k,g}}(\cA_{n_1,m_1}\mid \cA_{0,m_1}).
\end{align*}
In view of~\eqref{eq.19}, the three extra terms here corresponds to the boundary of $\cA_{n_1,m_1}$.
Observe that
$\frac{1}{nn_1}(2n_1^2\log \#(\cA)+k_1 \log \#\cA_{n_1,m_1})\to 0$ and $\frac{\log 2}{n_1}<\alpha/8$.
And
$$\frac{1}{nn_1}(\sum_{k=0}^{n-n_1-1}(1-g_k)n_1\log \#(\cA))\leq \frac{\log \#(\cA)}{n}(\sum_{k=0}^{n-1}(1-g_k))$$
Because $\mu_n=\frac{1}{n}\sum_{k=0}^{n-1}\nu_{n,k}\in U_1$, and $1-g_k\leq\nu_{n,k}(V_1) $, thus by \eqref{eq.boundarytail},
$$\frac{1}{nn_1}(\sum_{k=0}^{n-n_1-1}(1-g_k)n_1\log \#(\cA))\leq \mu_n(V_1)\log \#(\cA)<\alpha/4.$$

It remains to estimate $$\frac{1}{nn_1}\sum_{k=0}^{n-n_1-1}g_k H_{\nu_{k,g}}(\cA_{n_1,m_1}\mid \cA_{0,m_1}).$$

Because $\mu_n=\frac{1}{n}\sum_{k=0}^{n-1}\nu_{n,k}=\frac{1}{n}\sum_{k=0}^{n-1}g_{n,k}\nu_{n,k,g}+\frac{1}{n}\sum_{k=0}^{n-1}(1-g_{n,k})\nu_{n,k,b}$,
$w_n=\frac{1}{n}\sum_{k=0}^{n-1}\nu_{n,k,b}$ is a probability measures. Denote by $a_n=(1-\frac{\sum g_k}{n})$, then $a_n\leq \mu_n(V_1)$.
\begin{align*}
&\frac{1}{nn_1}\sum_{k=0}^{n-1}g_{n,k} H_{\nu_{n,k,g}}(\cA_{n_1,m_1}\mid \cA_{0,m_1})\\
&\leq \frac{1}{n_1}H_{\mu_n}(\cA_{n_1,m_1}\mid \cA_{0,m_1})-\frac{1}{n_1} a_n H_{w_n}(\cA_{n_1,m_1}\mid \cA_{0,m_1})\\
&\leq \frac{1}{n_1}H_{\mu_n}(\cA_{n_1,m_1}\mid \cA_{0,m_1}).
\end{align*}

By Lemma~\ref{l.limitinneighborhood}, taking to the limit, we have
\begin{align*}
&\limsup_n\frac{1}{n}\log \sum_{x\in E_n}e^{\phi^u_n(x)}\\
=&\limsup_n \left(\frac{1}{n}H_{\nu_{n,0}}(\bigvee_{j=0}^{n-1}\cA\mid I^u) + \int \phi^u d\mu_n \right)\\
\leq& \limsup_n \left(\frac{1}{n_1}H_{\mu_n}(\cA_{n_1,m_1}\mid \cA_{0,m_1}) + \int \phi^u d\mu_n\right) +\alpha/8+\alpha/4\\
<&-\alpha/2.\\
\end{align*}
The proof is complete.

\end{proof}

\subsection{Proof of Theorem~\ref{main.newcriterion}\label{ss.newcriterion}}
\begin{proof}
Let $\cF$ be a smooth foliation such that each of its leaf is uniformly transverse to the bundle $E^{cs}$.
Let $\{C_i\}_{i=1}^k$ be a sequence of foliation box which covers the ambient manifold. For any $x\in C_i$, denote
by $I_x$ the plaque of $\cF\mid_{C_i}$ that contains $x$. For any fix $\vep=\frac{1}{m}>0$ and $I_x$, by Theorem~\ref{t.convergence},
there is a full Lebesgue measure subset $\Gamma_{x,m}$ such that for any $y\in \Gamma_{x,m}$, any limit $\mu$ of the sequence
$\frac{1}{n}\sum_{i=0}^{n-1}\delta_{f^i(x)}$ satisfies $d(\mu,\G^u(f))\leq \frac{1}{m}$.
Then there is a full volume subset $\Gamma_x=\cap_m\Gamma_{x,m}\subset I_x$ such that for any $y\in \Gamma_x$, any limit of the sequence
$\frac{1}{n}\sum_{i=0}^{n-1}\delta_{f^i(x)}$ belongs to $\G^u(f)$.

Moreover, since the foliation $\cF$ is smooth, by Fubini theorem, $\cup_x \Gamma_x$ has full volume. 

\end{proof}

\section{Uniform convergence}\label{s.uniform}
In this section, we observe that the convergence in Theorem~\ref{t.exponentialtail} is indeed uniform.

As in Section~\ref{ss.newcriterion}, let $\cF$ be a smooth foliation such that each of its leaves is uniformly transverse to the bundle $E^{cs}$.
Let $\{C_i\}_{i=1}^k$ be a collection of foliation boxes which covers the ambient manifold. For any $x\in C_i$, denote
by $I_x$ the plaque of $\cF\mid_{C_i}$. With the same hypothesis of
Theorem~\ref{t.exponentialtail}, for any $x\in C_i$ and $n>0$, denote by
$B_{x,n,i}=\{y\in I_x; \frac{1}{n}\sum_{j=0}^{n-1} \delta_{f^j(y)}\in U_i\}$.

\begin{proposition}\label{p.uuniform}
$\limsup_{n\to \infty} \sup_x\{\frac{1}{n}\log \Leb(B_{x,n,i})\}<-\alpha/4$.
\end{proposition}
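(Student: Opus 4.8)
The plan is to revisit the proof of Theorem~\ref{t.exponentialtail} and observe that every estimate appearing there can be made independent of the choice of the transverse plaque $I_x$, provided one works with a fixed finite collection of foliation boxes. First I would fix the smooth foliation $\cF$ and the finite covering $\{C_i\}$ once and for all, and note that the plaques $I_x$ of $\cF|_{C_i}$ form a family of $C^1$ disks whose sizes, tangent directions, and geometry are uniformly controlled (they depend continuously on $x$ and there are only finitely many boxes). In particular, the constant $K$ bounding $\Leb(f^n(B_n(y,\delta)))$ can be taken uniform in $x$, since it only depends on the radius $\delta$ and on an upper bound for the volume of $f^n$-images of small balls, which is uniform over $M$; likewise the number $k_1$ governing the transition to ``almost tangency'' in the general case depends only on the angle between $T I_x$ and $E^{cs}$, which is uniformly bounded below over the finite family, and on the contraction rate in the dominated splitting.

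Next I would re-examine the separated-set and entropy estimates. For each $x$, let $E_{x,n}$ be a near-optimal $(n,\delta)$-separated subset of $B_{x,n,i}$, and form the corresponding measures $\nu^x_{n,0}$, $\nu^x_{n,k}=f^k_*\nu^x_{n,0}$, and $\mu^x_n=\frac1n\sum_{k=0}^{n-1}\nu^x_{n,k}\in U_i$. The crucial point is that the entire chain of inequalities leading to Lemma~\ref{l.exponentialtail} — the decomposition into ``head,'' ``blocks,'' and ``tail,'' the bound $2n_i^2\log\#(\cA_i)$, the splitting into good and bad parts $\tilde\cI_{k,g},\tilde\cI_{k,b}$, and the final estimate $\frac1{n_i}H_{\mu^x_n}(\cA_{n_i,m_i}\mid \cA_{0,m_i})+\int\phi^u\,d\mu^x_n<-\alpha$ coming from Lemma~\ref{l.limitinneighborhood} — uses only: (i) the combinatorial structure of $\cA_i$, $n_i$, $m_i$, $V_i$, which are fixed by $U_i$ and do not depend on $x$; (ii) the fact that $\mu^x_n\in U_i$ (true for every $x$ by convexity of $U_i$); and (iii) the uniform constant $k_1$ above. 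Therefore the bound is of the form
$$
\frac1n\log\sum_{y\in E_{x,n}}e^{\phi^u_n(y)}\le \frac1{n_i}H_{\mu^x_n}(\cA_{n_i,m_i}\mid\cA_{0,m_i})+\int\phi^u\,d\mu^x_n+\frac{C}{n}+\frac{3\alpha}{8}
$$
with $C$ independent of $x$, and I would take $\limsup_{n\to\infty}\sup_x$ of both sides, using that the family $\{\mu^x_n\}$ is contained in the compact set $\overline{U_i}$ so that any subsequential limit $\nu$ lies in $\overline{U_i}\subset U_i$ (up to shrinking $U_i$ slightly, or using Lemma~\ref{l.limitinneighborhood} for the closure), whence the first two terms have $\limsup\sup_x<-\alpha$. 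This gives $\limsup_n\sup_x\frac1n\log\sum_{y\in E_{x,n}}e^{\phi^u_n(y)}<-\alpha/2$.

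Finally I would combine this with the volume comparison $\Leb(B_{x,n,i})\le Ke^{n\alpha/4}\sum_{y\in E_{x,n}}e^{\phi^u_n(y)}$, which now holds with $K$ uniform in $x$ (here one also uses that, after at most a fixed number of iterates determined by the dominated splitting, $TI_x$ is uniformly close to $E^u$, and replacing $I_x$ by a bounded iterate changes $\Leb(B_{x,n,i})$ by a bounded factor — again uniformly). Taking $\frac1n\log$ and then $\limsup_n\sup_x$ yields $\limsup_n\sup_x\frac1n\log\Leb(B_{x,n,i})\le \alpha/4-\alpha/2=-\alpha/4$, which is the assertion of Proposition~\ref{p.uuniform}. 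The main obstacle is purely bookkeeping: one must check that the auxiliary constant $k_1$ in the ``general case'' subsection — which was defined via the statement ``$f^k(I)$ approaches uniformly to the unstable foliation'' — can indeed be chosen uniformly over the (infinite but uniformly controlled) family of plaques $\{I_x\}$; this follows because uniform transversality of $\cF$ to $E^{cs}$ together with the domination $E^{cs}\oplus E^u$ forces the angle between $Tf^k(I_x)$ and $E^u$ to decay at a rate depending only on the domination constants and on the (uniform) initial angle, not on $x$.
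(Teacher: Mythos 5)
Your proposal is correct and follows essentially the same approach as the paper: the paper likewise reuses the proof of Theorem~\ref{t.exponentialtail} verbatim, observing that for any choice of plaque $I_{x_n}$ one still has $\mu_n=\frac1n\sum_k\nu_{n,k}\in U_1$ by convexity, so the entropy estimate of Lemma~\ref{l.limitinneighborhood} applies unchanged. The only cosmetic difference is that the paper phrases it as a contradiction with a sequence $x_n$ while you take $\limsup_n\sup_x$ directly; your extra remarks on the uniformity of $K$ and $k_1$ over the finite family of foliation boxes just make explicit what the paper's ``all the remaining discussion still works'' leaves implicit.
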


Since the proof is similar to the proof of Theorem~\ref{t.exponentialtail}, we only sketch the proof and highlight the differences.
\begin{proof}
Since there are only finitely many foliation boxes, we may only consider the points of $x$ belonging to a foliation box $C_i$. For simplicity, we suppose $i=1$ in the proof, and consider the ball of probability
neighborhood $U_1$ of the measure $\mu_1$.

We prove by  contradiction. Let $x_n\in C_1$, write 
$$B_{n}=B_{x_n,n,1}=\{y\in I_{x_n}; \frac{1}{n}\sum_{j=0}^{n-1} \delta_{f^j(y)}\in U_1\}.$$

Consider the $\delta$-separated set $E_n\subset B_{n}$ such that
$$\sum_{y\in E_n}e^{\phi^u_n(x)}=\sup_{E\text{ is a }(n, \delta) \text{-separated set of} B_n}\sum_{x\in E}e^{\phi^u_n(x)}$$

Denote by
$$\nu_{n,0}=\frac{1}{\sum_{y\in E_n}e^{\phi^u_n(y)}}\sum_{y\in E_n} e^{\phi^u_n(y)} \delta_y,$$
the measure supported on $I_{x_n}$ and let $f^k_*(\nu_{n,0})=\nu_{n,k}$.
Since $U_1$ is convex,
$$\mu_n=\frac{1}{n}\sum_{i=0}^{n-1}\nu_{n,i} \in U_1.$$

Then all the remaining discussion still works.
\end{proof}

Now we  show that the convergence of Cesaro limit to the space of $\cM_{\inv}$ is uniform.

Recall that 
$$J=[a,b]=\left\{\int \varphi d\mu\right\}_{\mu\in \G^u(f)}.$$
Write $\varphi_n$ for the ergodic sum of $\varphi$. 
The continuous function $\varphi$ and $\vep>0$ induces a neighborhood $U_0$ of $\G^u(f)$ among probabilities,
such that for any $\nu \in U_0$, $d(\int \varphi d\nu, J)< \vep$.

Take $\vep^\prime=\min\{d(\nu,\G^u(f)); \nu\in (U_0)^c\}$. Consider the compact set $\cB_{\vep^\prime}$ as before. we cover
$\cB_{\vep^\prime}$ by $\{U_i\}_{i=1}^k$ where $U_i$ is taken to be a convex ball in the space of probability measure, and obtain an open covering $\{U_1,\cdots, U_k; U_0\}$ of the space
$\cM_{inv}$. By the compactness of $\cM_{\inv}$ in the space of probability measures, we have 
$d(\cM_{\inv}, \left(\cup_{i=0}^k U_i\right)^c)>0$. As a result, there is $n_0$ such that for any $n\geq n_0$ and any point $x\in M$,
$$\frac{1}{n}\sum_{i=0}^{n-1}\delta_{f^i(x)}\in \cup_{i=0}^k U_i.$$
This means, in particular, that for any $x\in M$ and for $n\geq n_0$, if $d(\frac{1}{n}\varphi_n(x), I)\geq \vep$, then
$\frac{1}{n}\sum_{i=0}^{n-1}\delta_{f^i(x)}\in \cup_{i=1}^k U_i$.

The discussion above  also implies a general version of the variational principle for partial entropy, which is shown by Hu, Hua and Wu in~\cite{HHW}.

As before, let $I$ be a disk transverse to the bundle $E^{cs}$, and $\phi(x)\in C^0(M)$ be a potential function which is not necessarily the geometric potential in the previous section.

For a fixed $\delta>0$ small enough and $B_n\subset I$ a sequence of measurable sets,
we consider the $(n,\delta)$-separated set $E_n$ such that
$$\sum_{x\in E_n}e^{\phi^u_n(x)}=\sup_{E\text{ is a }(n, \delta) \text{-separated set of $B_n$}}\sum_{x\in E}e^{\phi^u_n(x)}.$$

Denote by
$$\nu_{n,0}=\frac{1}{\sum_{x\in E_n}e^{\phi^u_n(x)}}\sum_{x\in E_n} e^{\phi^u_n(x)} \delta_x,\;\;  \nu_{n,k}=f^k_*(\nu_{n,0}) \text{ and } \mu_n=\frac1n\sum_{i=0}^{n-1} \nu_{n,i}.$$
Suppose $\mu_n\to \mu$, then main property of this section is the following key step on the proof of variation principle.

\begin{proposition}\label{p.strongvariation}
$\limsup\frac{1}{n}\log \sum_{x\in E_n}e^{\phi^u_n(x)}\leq h_{\mu}(f,\cF^u)+\int \phi^u d\mu$.
\end{proposition}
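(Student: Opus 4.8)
The plan is to run the same separated-set and Katok-style argument used for Lemma~\ref{l.exponentialtail}, but now with the \emph{general} potential $\phi^u$ (a continuous function, not necessarily the geometric one) and with the limit measure $\mu=\lim_n\mu_n$ playing the role that the fixed $\mu_i\in\cM_{\inv}$ played before. Concretely, I would first fix a finite partition $\cA$ with small boundary (satisfying \eqref{eq.smallboundary}, hence \eqref{eq.vanishingboundary}) whose diameter is smaller than the Lebesgue number of the foliation-box cover and also small enough that $\phi^u$ varies by less than any prescribed $\eta$ on elements of $\cA^u$. I would then invoke Proposition~\ref{p.approximate} for the limit measure $\mu$: for any $\eta>0$ there is $n_1$ with $\frac1{n_1}H_\mu(\bigvee_{i=1}^{n_1}f^{-i}(\cA^u)\mid\cA^u)<h_\mu(f,\cF^u)+\eta$, and then $m_1$ with $\frac1{n_1}H_\mu(\cA_{n_1,m_1}\mid\cA_{0,m_1})<h_\mu(f,\cF^u)+\eta$. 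Since $\cA_{n_1,m_1}$ and $\cA_{0,m_1}$ are finite partitions and (by \eqref{eq.vanishingboundary} and \eqref{eq.transverseboundary}) $\mu$ gives no mass to their boundaries, the function $\nu\mapsto \frac1{n_1}H_\nu(\cA_{n_1,m_1}\mid\cA_{0,m_1})+\int\phi^u\,d\nu$ is upper semi-continuous at $\mu$ (this is exactly the content behind Lemma~\ref{l.limitinneighborhood}); so for all large $n$, since $\mu_n\to\mu$, we get $\frac1{n_1}H_{\mu_n}(\cA_{n_1,m_1}\mid\cA_{0,m_1})+\int\phi^u\,d\mu_n < h_\mu(f,\cF^u)+\int\phi^u\,d\mu+2\eta$.

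Next I would reproduce verbatim the chain of inequalities from the proof of Lemma~\ref{l.exponentialtail}. Because $\diam(\cA)<\delta$, each element of $\bigvee_{i=0}^{n-1}f^{-i}\cA$ contains at most one point of $E_n$, so
\[
\log\sum_{x\in E_n}e^{\phi^u_n(x)}=H_{\nu_{n,0}}\!\Big(\bigvee_{i=0}^{n-1}f^{-i}\cA\Big)+n\int\phi^u\,d\mu_n,
\]
and $H_{\nu_{n,0}}(\bigvee_{i=0}^{n-1}f^{-i}\cA)=H_{\nu_{n,0}}(\bigvee_{i=0}^{n-1}f^{-i}\cA\mid\cI_0)$ since $\cI_0$ is trivial for $\nu_{n,0}$. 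Then the ``head/tail'' decomposition over residues $j=0,\dots,n_1-1$ gives, after summing over $j$ and dividing by $n_1$, inequality \eqref{eq.19}. One then bounds each term $H_{\nu_{n,k}}(\bigvee_{i=0}^{n_1-1}f^{-i}\cA\mid\cI_k\vee\bigvee_{i=1}^k f^i\cA)$ using $\cI_k\vee\bigvee_{i=0}^k f^i\cA\succ\tilde\cI_k$ and, for $k>k_1$, the good/bad splitting $\tilde\cI_k=\tilde\cI_{k,g}\sqcup\tilde\cI_{k,b}$ with $\tilde\cI_{k,g}\succ\cA_{0,m_1}$ outside the $\rho$-transverse boundary. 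This produces exactly the estimate from the general-case part of the previous proof, with the extra boundary terms controlled: $\frac1{nn_1}(2n_1^2\log\#\cA+k_1\log\#\cA_{n_1,m_1})\to0$, $\frac{\log 2}{n_1}<\eta$ (choosing $n_1$ large), and $\frac1n\sum_k(1-g_k)\log\#\cA\le\mu_n(V_1)\log\#\cA$ which is small by \eqref{eq.boundarytail} — here I should note that $V_1$ and the requirement \eqref{eq.boundarytail} must be arranged \emph{for} the limit measure $\mu$, using that $\mu(\partial\cA_{n_1,m_1})=0$, and that $\mu_n\to\mu$ forces $\limsup_n\mu_n(\overline{V_1})\le\mu(\overline{V_1})$ small. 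Finally, the good part satisfies $\frac1{nn_1}\sum_k g_k H_{\nu_{n,k,g}}(\cA_{n_1,m_1}\mid\cA_{0,m_1})\le\frac1{n_1}H_{\mu_n}(\cA_{n_1,m_1}\mid\cA_{0,m_1})$ by concavity of conditional entropy in the measure (the $w_n$ term is discarded). Combining everything and letting $n\to\infty$ yields
\[
\limsup_n\frac1n\log\sum_{x\in E_n}e^{\phi^u_n(x)}\le h_\mu(f,\cF^u)+\int\phi^u\,d\mu+C\eta,
\]
and since $\eta>0$ was arbitrary the proposition follows.

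The one genuine subtlety — as opposed to bookkeeping — is that in Lemma~\ref{l.exponentialtail} the partition $\cA=\cA_i$, the integers $n_i,m_i$, and the neighborhood $V_i$ were all chosen in advance attached to a \emph{fixed} measure $\mu_i$, whereas here the target measure $\mu$ is only revealed as the limit $\lim_n\mu_n$, after the sequences $E_n,\nu_{n,k},\mu_n$ have been formed. I would handle this by reversing the order of quantifiers: fix $\eta>0$ first; build $B_n$, $E_n$, $\nu_{n,0}$, $\mu_n$ and pass to a subsequence with $\mu_n\to\mu$ realizing the limsup; \emph{then} choose $\cA$ (small boundary, small diameter relative to $\eta$ and to $\phi^u$), $n_1$, $m_1$, $\rho$, $k_1$ and $V_1$ depending on $\mu$ and $\eta$; and only after that invoke $\mu_n\to\mu$ to transfer the entropy and boundary estimates from $\mu$ to $\mu_n$ for all large $n$. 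Since $\cA$ may be chosen after $E_n$ is already a $(n,\delta)$-separated set — the value of $\delta$ being fixed at the outset and $\diam(\cA)<\delta$ being the only compatibility constraint — this causes no circularity. Everything else is a line-by-line repetition of the computations already carried out, so I do not expect any further obstacle.
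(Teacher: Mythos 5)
Your proposal is correct and matches the paper's approach: the paper merely remarks that Proposition~\ref{p.strongvariation} ``follows using the same argument as in Section~\ref{s.proof},'' and your write-up is precisely a filled-in version of that argument, with the genuine subtlety --- that $\cA$, $n_1$, $m_1$, $\rho$, $k_1$, $V_1$ must now be chosen \emph{after} passing to a subsequence with $\mu_n\to\mu$, which is consistent because $\delta$ and the foliation-box cover are fixed at the outset and $\diam(\cA)<\delta$ is the only compatibility constraint --- correctly identified and correctly resolved by using weak* convergence together with $\mu(\partial\cA_{n_1,m_1})=\mu(\partial\cA_{0,m_1})=0$ to transfer the entropy and boundary estimates from $\mu$ to $\mu_n$. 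One minor remark: the requirement that $\phi^u$ vary by less than $\eta$ on elements of $\cA^u$ is harmless but not needed here, since that small-oscillation condition (the analogue of~\eqref{eq.smalldiameter}) is only used for the Bowen-ball volume estimate in Theorem~\ref{t.exponentialtail}, not in the entropy computation of Lemma~\ref{l.exponentialtail} that you are reproducing.
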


A similar result was proved in~\cite{HHW} when the disk $I$ is contained in an unstable leaf. Here we make the observation that if $I$ is transverse to the $E^{cs}$ bundle, then after a certain number of iteration by $f$, it will be almost tangent to $E^u$ in the sense that $f^n(I)$ must be contained in a small neighborhood of a $\cF^u$ leaf. Then Proposition~\ref{p.strongvariation} follows using the same argument as in Section~\ref{s.proof}.

\section{Diffeomorphisms with mostly contracting center}\label{s.mostlycontracting}
The main result in this section is Theorem~\ref{t.mostlyoontracting}, which gives a complete description of the number and structure of physical measures for generic $C^1$ diffeomorphisms near a $C^{1+}$ mostly contracting system, which easily implies Theorem~\ref{main.mostlycontracting}.  Let $f: M \to M$ be a $C^{1+}$ partially hyperbolic diffeomorphism with  mostly contracting center, i.e., all the Gibbs $u$-state of $f$ have negative center Lyapunov exponent.  It was shown in \cite{BV00} that:

\begin{proposition}\label{p.mostlycontracting}

$f$ admits finitely many physical measures, such that the union of the basins has full volume.
Moreover, there is a bijective map between physical measures and the ergodic Gibbs $u$-states of
$f$.

\end{proposition}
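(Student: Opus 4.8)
The statement is cited from~\cite{BV00}, so the plan is to recall the argument there and indicate how it produces the bijection. The strategy is to combine the structure theory of Gibbs $u$-states (Proposition~\ref{p.Gibbsustates}) with the mostly contracting hypothesis. First I would use Proposition~\ref{p.Gibbsustates}(1) to note that $\Gibb^u(f)$ is a nonempty weak-$*$ compact convex set whose ergodic components are again Gibbs $u$-states; since all center Lyapunov exponents of all Gibbs $u$-states are negative, each ergodic Gibbs $u$-state is a physical measure by Proposition~\ref{p.Gibbsustates}(4). Thus every ergodic Gibbs $u$-state is physical, which is half of the asserted bijection. Conversely, every physical measure is a Gibbs $u$-state by Proposition~\ref{p.Gibbsustates}(4), and being physical it must be ergodic; so the map ``ergodic Gibbs $u$-state $\mapsto$ its basin'' is onto the set of physical measures.

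The second step is finiteness. Here I would invoke the finiteness of the number of ergodic Gibbs $u$-states for a mostly contracting system: since the supports of distinct ergodic Gibbs $u$-states are disjoint $\cF^u$-saturated compact sets (Proposition~\ref{p.Gibbsustates}(2)), and each such support, together with the negative center exponent, carries a local stable lamination of positive volume, a compactness/disjointness argument bounds their number. Concretely, each ergodic Gibbs $u$-state $\mu_i$ has a "disk-attractor" structure: there is an unstable disk $D_i$ such that for Lebesgue-a.e.\ $x\in D_i$ the Birkhoff averages converge to $\mu_i$ (Proposition~\ref{p.Gibbsustates}(3) plus the negative center exponent forcing a unique limit), and the union of local stable manifolds through $D_i$ has positive volume and lies in $Basin(\mu_i)$. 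Disjointness of these basins and local finiteness of the unstable foliation boxes then give that there are only finitely many $\mu_i$.

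The third step is that the basins cover a full volume set. For this I would take the full-volume set $\Gamma$ from Theorem~\ref{main.newcriterion} (equivalently, from the $C^{1+}$ case, the standard Gibbs $u$-state argument): for $x\in\Gamma$ every Cesaro limit lies in $\G^u(f)=\Gibb^u(f)$, hence is a convex combination of the finitely many ergodic Gibbs $u$-states $\mu_i$. Then I would use absolute continuity of the strong stable foliation (valid since $f$ is $C^{1+}$) together with the negative center exponents: Lebesgue-a.e.\ point lies on a strong stable manifold of a point whose forward average converges, and along stable manifolds Birkhoff averages of continuous functions are asymptotic; combined with the fact that the only ergodic measures that can appear as limits are the $\mu_i$, this forces the Cesaro average at a.e.\ point to converge to a single $\mu_i$, i.e.\ a.e.\ point lies in some $Basin(\mu_i)$.

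The main obstacle is the finiteness step: controlling the number of ergodic Gibbs $u$-states requires the key geometric input that each ergodic Gibbs $u$-state "attracts" a whole foliation box of unstable disks and hence a positive-volume set of points via absolutely continuous stable holonomy, and that these sets are essentially disjoint. This is exactly where the $C^{1+}$ regularity and the mostly contracting hypothesis are used in an essential way, and is the heart of the argument in~\cite{BV00}; the rest is bookkeeping with Proposition~\ref{p.Gibbsustates} and Theorem~\ref{main.newcriterion}.
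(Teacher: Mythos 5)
The paper itself gives no proof of this proposition — it is cited directly from~\cite{BV00} — so you are reconstructing the argument rather than matching a written proof. Your reconstruction is essentially the standard one from~\cite{BV00}, and the three-step structure (identify each ergodic Gibbs $u$-state as a physical measure via Proposition~\ref{p.Gibbsustates}(4); bound their number using disjointness of supports plus absolute continuity of the stable lamination; cover full volume via Proposition~\ref{p.Gibbsustates}(3) and stable holonomy) is the right skeleton. One remark: for the full-volume step you do not need Theorem~\ref{main.newcriterion} at all in the $C^{1+}$ setting — Proposition~\ref{p.Gibbsustates}(3) together with absolutely continuous stable holonomy suffices, as you yourself note parenthetically. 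Invoking the paper's new $C^1$ criterion here would be anachronistic, since Proposition~\ref{p.mostlycontracting} is a pre-existing $C^{1+}$ input to the paper rather than a consequence of it.

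The one genuine soft spot is the sentence ``being physical it must be ergodic.'' Proposition~\ref{p.Gibbsustates}(4) does not say this, and physical measures are not automatically ergodic from the definition~\eqref{eq.SRBmeasure}. The correct argument is circular-looking but sound if ordered properly: first establish, independently of ergodicity, that the (finitely many) ergodic Gibbs $u$-states $\mu_1,\dots,\mu_k$ have basins whose union has full volume; then given any physical measure $\mu$, its basin has positive volume and hence meets $\mathrm{Basin}(\mu_i)$ for some $i$, and since Birkhoff limits are unique on a basin this forces $\mu=\mu_i$. So ergodicity of physical measures is a \emph{consequence} of the finiteness and full-volume steps, not an input. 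As written, your proposal uses ergodicity before it is available; reorder to fix this. Likewise, in the finiteness step, the assertion that ``local finiteness of the unstable foliation boxes'' bounds the count is a bit quick — the usual cleaner route (as in the proof of Corollary~\ref{maincor.finitephysicalmeasure} in Section~\ref{s.1d}) is a compactness argument: an infinite sequence of distinct ergodic Gibbs $u$-states would accumulate on a Gibbs $u$-state, an ergodic component of which would have negative center exponent and hence attract nearby unstable disks by the absolute continuity of Pesin stable laminations, contradicting disjointness of basins.
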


There are more precise description of the number of physical measures.
We say that a hyperbolic saddle point
has maximum index if the dimension of its stable manifold coincides with the dimension of the center stable
bundle $E^{cs}$. A \emph{skeleton} of $f$ is a collection $\cS = \{p_1, \dots, p_k\}$ of
hyperbolic saddle points with maximum index, satisfying:
\begin{itemize}
\item[(i)] For any $x\in M$ there is $p_i \in \cS$ such that the stable manifold $W^s(\Orb(p_i))$
intersects  transversally with  the unstable leaf $\cF^u(x)$ at some point;
\item[(ii)] $W^s(\Orb(p_i)) \cap W^u(\Orb(p_j )) = \emptyset$ for every $i \neq j$, that is,
the points in $\cS$ have no heteroclinic intersections.
\end{itemize}

Not all partially hyperbolic diffeomorphisms $f$ have a skeleton. However, if $f$ has a skeleton, then all skeletons of $f$ must have the same cardinality.  The relation between a skeleton and physical measures for $f$ was established
in~\cite[Theorem A]{DVY}:

\begin{proposition}\label{p.skeleton}
Let $f$ be a $\C^{1+}$ diffeomorphism with mostly contracting center.
Then $f$ admits a skeleton. Moreover, for any skeleton $\cS = \{p_1,\dots, p_k\}$
of $f$, the number of physical measures is precisely $k=\# \cS$.
\end{proposition}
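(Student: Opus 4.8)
The plan is to read a skeleton off the finitely many physical measures and then match an arbitrary skeleton with them by a counting argument. Write $\mathcal{P}=\{\mu_1,\dots,\mu_m\}$ for the physical measures of $f$. By Proposition~\ref{p.mostlycontracting} together with Proposition~\ref{p.Gibbsustates}(4) and the definition of mostly contracting center, $\mathcal{P}$ is exactly the (finite, nonempty) set of ergodic Gibbs $u$-states, and each $\mu_i$ is hyperbolic with all its center Lyapunov exponents negative. Since $f$ is $C^{1+}$, I would apply Katok's periodic-orbit approximation to each $\mu_i$ to produce a hyperbolic periodic point $p_i\in\supp(\mu_i)$ of index $\dim E^{cs}$ (i.e.\ of maximum index), chosen so that in addition $\overline{W^u(\Orb(p_i))}=\supp(\mu_i)$; this last identification uses that $\supp(\mu_i)$ is $\cF^u$-saturated (Proposition~\ref{p.Gibbsustates}(2)), that $W^u(\Orb(p_i))=\cF^u(\Orb(p_i))$, and that $\mu_i$ is the unique ergodic Gibbs $u$-state carried by the homoclinic class of $p_i$. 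Put $\cS=\{p_1,\dots,p_m\}$.

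I would then check that $\cS$ is a skeleton. For (i), fix $x\in M$ and an interior disk $D\subset\cF^u(x)$. By Proposition~\ref{p.Gibbsustates}(3), for Lebesgue-a.e.\ $y\in D$ every weak$^*$ accumulation point of $\frac1n\sum_{j=0}^{n-1}\delta_{f^j(y)}$ is a Gibbs $u$-state, hence has an ergodic component equal to some $\mu_i$; in particular the forward orbit of $y$ accumulates on all of $\supp(\mu_i)\ni p_i$. Since $f$ expands $E^u$ uniformly, for large $n$ the disk $f^n(D)$ contains a uniform-size unstable disk around a point $C^0$-close to $p_i$, and such a disk crosses $W^s_{\loc}(p_i)$ transversally (it is tangent to $E^u$, while $W^s(\Orb(p_i))$ is everywhere transverse to $E^u$ by partial hyperbolicity). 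Applying $f^{-n}$ and using $f$-invariance of $\cF^u$ and of $W^s(\Orb(p_i))$ gives $\cF^u(x)\pitchfork W^s(\Orb(p_i))\neq\emptyset$. For (ii), if $q\in W^s(\Orb(p_i))\cap W^u(\Orb(p_j))$ with $i\neq j$, then $f^n(q)\to\Orb(p_i)$ while $f^n(q)\in W^u(\Orb(p_j))$ for all $n$, so $p_i\in\overline{W^u(\Orb(p_j))}=\supp(\mu_j)$; since $\supp(\mu_j)$ is closed and $\cF^u$-saturated, this forces $\supp(\mu_i)=\overline{W^u(\Orb(p_i))}\subset\supp(\mu_j)=\overline{W^u(\Orb(p_j))}$, and uniqueness of the ergodic Gibbs $u$-state carried by $\overline{W^u(\Orb(p_j))}$ yields $\mu_i=\mu_j$, a contradiction.

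Finally, for the cardinality statement, let $\cS'=\{q_1,\dots,q_k\}$ be an arbitrary skeleton. To each $q_j$ associate the unique ergodic Gibbs $u$-state $\nu_j$ carried (and fully supported) by $\overline{W^u(\Orb(q_j))}$; it has negative center exponent, so $\nu_j\in\mathcal{P}$. The map $j\mapsto\nu_j$ is injective: if $\nu_j=\nu_{j'}$ with $j\neq j'$, then $q_j\in\supp(\nu_j)\subset\overline{W^u(\Orb(q_{j'}))}$, and a large unstable disk inside $W^u(\Orb(q_{j'}))$ accumulating on $q_j$ must cross $W^s_{\loc}(q_j)$ transversally, producing a heteroclinic intersection forbidden by (ii). Conversely, given $\mu_i$, pick $x\in\supp(\mu_i)$; by (i), $\cF^u(x)$ crosses some $W^s(\Orb(q_j))$ at a point of $\supp(\mu_i)$, so $\Orb(q_j)\subset\supp(\mu_i)=\overline{W^u(\Orb(p_i))}$, whence $\overline{W^u(\Orb(q_j))}\subset\supp(\mu_i)$ and, by uniqueness of the ergodic Gibbs $u$-state on $\overline{W^u(\Orb(p_i))}$, $\nu_j=\mu_i$. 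Hence $j\mapsto\nu_j$ is a bijection onto $\mathcal{P}$, so $\#\cS'=m$. The main obstacle — where the mostly contracting hypothesis and the $C^{1+}$ regularity (Katok theory, absolute continuity of $\cF^u$, and the transitive homoclinic-class structure underlying Proposition~\ref{p.mostlycontracting}) are genuinely used — is establishing that for a maximum-index periodic point $p$ the set $\overline{W^u(\Orb(p))}$ carries a unique ergodic Gibbs $u$-state, fully supported on it, coinciding with the physical measure whose basin absorbs the unstable leaves meeting $W^s(\Orb(p))$; the remaining steps ($\lambda$-lemma, transversality of iterated disks with stable manifolds) are routine by comparison.
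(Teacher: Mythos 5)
The paper does not prove Proposition~\ref{p.skeleton} at all; it is quoted verbatim from \cite[Theorem~A]{DVY}, so there is no in-paper argument to compare yours against. Evaluated on its own terms, your write-up has the right architecture — extract periodic points from the physical measures via Katok, verify the skeleton axioms with a $\lambda$-lemma argument, and match an arbitrary skeleton to the physical measures by a bijection — but it rests entirely on the statement you yourself flag at the end: that for a maximum-index hyperbolic periodic point $p$ with negative center exponents, the set $\overline{W^u(\Orb(p))}$ is exactly the support of one of the physical measures and carries a unique ergodic Gibbs $u$-state fully supported on it. That statement is not ``the main obstacle'' of an otherwise complete proof; it \emph{is} the theorem. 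Every load-bearing step in your argument uses it: producing $p_i$ with $\overline{W^u(\Orb(p_i))}=\supp(\mu_i)$ rather than merely $p_i\in\supp(\mu_i)$ (Katok gives only the latter); deducing ``no heteroclinics'' (ii); the injectivity of $j\mapsto\nu_j$; and the surjectivity step where $\overline{W^u(\Orb(q_j))}\subset\supp(\mu_i)$ is promoted to $\nu_j=\mu_i$. None of these goes through without the uniqueness-on-$\overline{W^u(\Orb(p))}$ lemma, and you have not proved it.

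To close the gap you would need the actual content of~\cite{DVY}: that the supports of the physical measures of a mostly-contracting system are exactly the (pairwise disjoint) homoclinic classes of the skeleton points, each of which is an $\cF^u$-minimal set carrying a single ergodic Gibbs $u$-state. The proof of that fact is where the $C^{1+}$ regularity, absolute continuity of the strong-unstable foliation, the Pesin block/shadowing machinery, and the Bonatti--Viana structure of Gibbs $u$-states are genuinely consumed; labeling the surrounding $\lambda$-lemma and transversality arguments as ``routine by comparison'' is accurate, but it means what you have written is a correct reduction to the key lemma of~\cite{DVY}, not an independent proof of Proposition~\ref{p.skeleton}. Either prove that lemma or state clearly that you are invoking~\cite[Theorem~A and its Lemma~2.x]{DVY}, at which point the remainder of your argument is sound and essentially reproduces their counting step.
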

Indeed, it is shown that the support of each physical measure of $f$ coincides with the closure of $W^u(\Orb(p_i))$ for some $i$.

Let us call \emph{pre-skeleton} any finite collection $\{p_1,\dots, p_k\}$
of saddles with maximum index satisfying condition (i), that is, such that every
unstable leaf $\cF^u(x)$ has a point of transverse intersection with $W^s(\Orb(p_i))$ for
some $i$. Thus a pre-skeleton $S$ is a skeleton if and only if there are no heteroclinic intersections within $S$. More precisely,

\begin{lemma}\label{l.preskeletoncontainskeleton}\cite[Lemma 2.5]{DVY}

Every pre-skeleton contains a subset which is a skeleton.

\end{lemma}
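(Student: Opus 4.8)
The statement to prove is Lemma~\ref{l.preskeletoncontainskeleton}: every pre-skeleton contains a subset which is a skeleton. Here is how I would approach it.

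\medskip

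\textbf{Plan.} The idea is to start with a pre-skeleton $S = \{p_1,\dots,p_k\}$ and delete saddles one at a time, each time removing a point that is the ``source'' end of a heteroclinic connection, while verifying that condition (i) is preserved at every step. Concretely, suppose $S$ is not already a skeleton, so there exist $i \neq j$ with $W^s(\Orb(p_i)) \cap W^u(\Orb(p_j)) \neq \emptyset$. The key geometric observation is that if $q$ is a transverse heteroclinic point in $W^s(\Orb(p_i)) \pitchfork W^u(\Orb(p_j))$ (transversality follows since both $p_i$ and $p_j$ have maximum index, so $\dim W^s(\Orb(p_i)) + \dim W^u(\Orb(p_j)) = \dim E^{cs} + \dim E^u = \dim M$, and one checks the intersection can be taken transverse, or one invokes the inclination lemma directly), then by the $\lambda$-lemma (inclination lemma) the unstable manifold $W^u(\Orb(p_j))$ accumulates on $W^u(\Orb(p_i))$: iterating a disk in $W^u(\Orb(p_j))$ through $q$ forward, the images $C^1$-converge on compact parts of $W^u(\Orb(p_i))$.

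\medskip

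\textbf{Key steps.} First I would fix notation and record the transversality/dimension count so that all heteroclinic intersections among points of $S$ are (or may be taken) transverse, and $W^s(\Orb(p_i))$ has complementary dimension to $\cF^u(x)$ so intersections with unstable leaves are of the expected transverse type. Second, the main lemma: if $W^s(\Orb(p_i)) \pitchfork W^u(\Orb(p_j)) \neq \emptyset$, then for every $x$ such that $\cF^u(x)$ meets $W^s(\Orb(p_j))$ transversally, $\cF^u(x)$ also meets $W^s(\Orb(p_i))$ transversally. This is the heart of the argument and uses the inclination lemma: a transverse intersection point $z \in \cF^u(x) \pitchfork W^s(\Orb(p_j))$ means, pushing forward by $f^n$, that $f^n(\cF^u(x))$ contains disks converging to $W^u(\Orb(p_j))$; since $W^u(\Orb(p_j))$ has a transverse intersection with $W^s(\Orb(p_i))$ at $q$, and transverse intersections persist under $C^1$-small perturbations, $f^n(\cF^u(x))$ also meets $W^s(\Orb(p_i))$ transversally for $n$ large; pulling back by $f^{-n}$ (and using that unstable leaves are $f$-invariant, $f(\cF^u(x)) = \cF^u(f(x))$, and $W^s(\Orb(p_i))$ is invariant) gives a transverse intersection of $\cF^u(x)$ with $W^s(\Orb(p_i))$. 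Third, the bookkeeping: consider the relation $j \to i$ whenever there is such a heteroclinic connection from $p_j$ to $p_i$. Remove from $S$ any $p_j$ which is a source in this relation (i.e.\ $j \to i$ for some $i \neq j$); by the main lemma, condition (i) still holds for $S \setminus \{p_j\}$ because any unstable leaf that relied on $p_j$ also intersects $W^s(\Orb(p_i))$. Repeat; since $S$ is finite the process terminates, and one must argue the final nonempty collection $S'$ has no heteroclinic intersections. For the latter, if $p_j \to p_i$ persisted in $S'$ we would have removed $p_j$ — but one must be careful that removal of other points doesn't create a cycle; to handle this cleanly I would instead take $S'$ to be a minimal sub-collection of $S$ (with respect to inclusion) still satisfying condition (i), and then show minimality forces no heteroclinic connections: if $p_j \to p_i$ in $S'$, then $S' \setminus \{p_j\}$ still satisfies (i) by the main lemma, contradicting minimality. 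Finally I would check $S' \neq \emptyset$, which is immediate since the empty set cannot satisfy (i) (take any $x$).

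\medskip

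\textbf{Main obstacle.} The genuinely delicate point is the main lemma — the inclination-lemma argument transferring a transverse intersection ``along'' a heteroclinic connection — and in particular making sure the transversality hypotheses (maximum index of the saddles, transversality of the pre-skeleton intersections, and the dimension match with unstable leaves) are all in place so that the $\lambda$-lemma applies and the persistence of transverse intersections under $C^1$-convergence of disks is legitimate. The combinatorial step, once phrased via minimality rather than iterative deletion, is then essentially a one-line consequence and avoids any worry about cycles in the heteroclinic relation.
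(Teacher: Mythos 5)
The paper does not prove this lemma; it is stated as a citation to \cite[Lemma~2.5]{DVY}, so there is no in-paper proof to compare against. Your proposed argument is, however, correct and is the standard one: propagate transverse intersections along heteroclinic connections via the inclination lemma, and use a minimality argument (rather than iterative deletion) to sidestep cycles in the heteroclinic relation, which is exactly the right way to handle the bookkeeping. The main lemma you isolate --- that $W^u(\Orb(p_j))\pitchfork W^s(\Orb(p_i))\ne\emptyset$ implies every $\cF^u(x)$ meeting $W^s(\Orb(p_j))$ transversally also meets $W^s(\Orb(p_i))$ transversally --- is indeed the heart of the matter, and the dimension count $\dim W^u(\Orb(p_j))=\dim E^u$ is what makes the $\lambda$-lemma applicable. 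One point you could tighten: you hedge with ``the intersection can be taken transverse,'' but transversality here is automatic, not something to arrange. For a maximum-index saddle, the backward-invariant $E^{cs}$-cone forces $W^s(\Orb(p_i))$ to be everywhere tangent to $E^{cs}$, and dually $W^u(\Orb(p_j))$ (which coincides with $\cF^u(\Orb(p_j))$) is everywhere tangent to $E^u$; since $E^{cs}\oplus E^u=TM$, any intersection of these manifolds, or of either with an unstable leaf, is transverse. Noting this makes both the application of the inclination lemma and the persistence-of-transversality step unconditional.
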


This notion of pre-skeleton is useful, since the continuation of a skeleton or a pre-skeleton under $C^1$ topology is
always a pre-skeleton:

\begin{lemma}\label{l.preskeleton}\cite[Lemma 2.4]{DVY}
Suppose $f$ has a pre-skeleton $\cS = \{p_1, \dots, p_k\}$. Let $p_i(g), i = 1, \dots, k$
be the continuation of the saddles $p_i$ for nearby diffeomorphism $g$. Then
$\cS(g) = \{p_1(g), \dots , p_k(g)\}$ is a pre-skeleton for every $C^1$ diffeomorphism $g$ in a
$C^1$ neighborhood of $f$.
\end{lemma}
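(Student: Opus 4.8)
\textbf{Proof plan for Lemma~\ref{l.preskeleton}.}

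The plan is to verify directly that the two defining properties of a pre-skeleton are robust under $C^1$ perturbation. Since $\cS$ is a pre-skeleton consisting of hyperbolic saddles of maximum index, each $p_i$ has a well-defined continuation $p_i(g)$ for $g$ $C^1$-close to $f$, still a hyperbolic saddle whose stable manifold has the same dimension as $\dim E^{cs}_g$ (the dimension of the center-stable bundle is locally constant among partially hyperbolic diffeomorphisms, and the index of a hyperbolic saddle is preserved under its continuation). Thus each $p_i(g)$ is again a saddle of maximum index, and the collection $\cS(g)=\{p_1(g),\dots,p_k(g)\}$ is a legitimate candidate; the only thing requiring argument is condition (i), namely that every unstable leaf $\cF^u_g(x)$ of $g$ meets some $W^s(\Orb(p_i(g)))$ transversally.

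The main step is a compactness-plus-openness argument. First I would observe that transverse intersections are an open condition: if $\cF^u_f(x)$ meets $W^s(\Orb(p_i))$ transversally at a point $z$, then for $g$ $C^1$-close to $f$ and $y$ in the unstable leaf near $x$, the local unstable plaque $\cF^u_{g,\loc}(y)$ (which depends continuously on $g$ and on the base point, by continuous dependence of the unstable foliation on the diffeomorphism in the $C^1$ topology) still meets the continuation $W^s_{\loc}(\Orb(p_i(g)))$ transversally near $z$. Concretely, one works with a fixed-size local unstable disk through each point, uses that such disks vary continuously (in the $C^1$ sense) with both the center point and the diffeomorphism, and uses that the local stable manifolds $W^s_{\loc}(p_i(g))$ vary continuously in the $C^1$ sense as well; then transversality of two $C^1$ submanifolds is stable under $C^1$-small perturbations of both.

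To upgrade this local statement to the global condition (i), I would cover $M$ by finitely many foliation boxes for $\cF^u_f$. For each $x\in M$, condition (i) for $f$ gives some $p_{i(x)}$ and a transverse intersection point $z_x$ of $\cF^u_f(x)$ with $W^s(\Orb(p_{i(x)}))$; since $W^s(\Orb(p_i))=\bigcup_{n\ge 0} f^n(W^s_{\loc}(\Orb(p_i)))$, after replacing $z_x$ by a backward iterate along the leaf we may assume $z_x$ lies in $f^{N_x}(W^s_{\loc}(\Orb(p_{i(x)})))$ for some finite $N_x$, and the transverse intersection already occurs within a bounded-size unstable plaque around a backward iterate of $x$. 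By the openness discussed above, this transverse intersection persists for all $g$ in a $C^1$ neighborhood $\cU_x$ of $f$ and for all base points $x'$ in a neighborhood of $x$. Covering the compact manifold $M$ by finitely many such neighborhoods and intersecting the corresponding finitely many $\cU_x$, we obtain a single $C^1$ neighborhood $\cU$ of $f$ on which condition (i) holds for every $g\in\cU$ and every $x\in M$. Hence $\cS(g)$ is a pre-skeleton for all $g\in\cU$.

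The point requiring the most care — and the one I would expect to be the main obstacle — is the uniformity in the base point $x$: one must ensure that the number of backward iterates $N_x$ needed to bring the transverse intersection into a local stable manifold, and the size of the unstable plaque on which transversality is witnessed, can be chosen locally constant in $x$. This is handled by the compactness of $M$ together with continuity of the unstable foliation, but it is the step where the argument must be set up carefully rather than invoked as "obvious." Everything else (stability of hyperbolic continuations, preservation of the index, persistence of transverse intersections under $C^1$ perturbation) is standard hyperbolic-theory input.
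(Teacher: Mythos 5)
The paper gives no proof of this lemma: it is stated verbatim with a citation to \cite[Lemma~2.4]{DVY} and used as a black box, so there is no in-paper argument to compare yours against. Evaluated on its own, your strategy is the standard and correct one, and is (to my knowledge) how \cite{DVY} proves it: the index of each $p_i$ is preserved under hyperbolic continuation, transversality between a local unstable plaque and a finite-radius stable disk is a $C^1$-open condition jointly in the base point and in the diffeomorphism, and a finite cover of the compact manifold $M$ promotes the local openness to a single $C^1$-neighborhood of $f$. You correctly single out uniformity in the base point $x$ as the step requiring care.

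Two small corrections to the write-up. First, the identity should read $W^s(\Orb(p_i))=\bigcup_{n\ge 0}f^{-n}\bigl(W^s_{\loc}(\Orb(p_i))\bigr)$: it is \emph{backward} iterates of the local stable manifold that exhaust the global one, not forward iterates. Second, the ``backward iteration'' normalization does not achieve both of your stated goals at once. Replacing $x$ by $f^{-m}(x)$ does bring the intersection point $f^{-m}(z_x)$ into a bounded-radius unstable plaque around the new base point, but it simultaneously pushes that point into $f^{-(N_x+m)}(W^s_{\loc}(\Orb(p_i)))$, i.e.\ into an ever \emph{larger} backward iterate of the local stable manifold, not a smaller one. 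This is harmless (compactness only requires finitely many $x$, hence a uniform bound on these iterates), but the normalization is really unnecessary: each $z_x$ already lies in a finite-radius plaque $\cF^u_{r_x}(x)$ and a finite-radius stable disk $W^s_{\rho_x}(\Orb(p_i))$, measured in the intrinsic leaf metrics, and your openness-plus-compactness argument bounds $r_x$ and $\rho_x$ uniformly directly. With that cleanup, the proof is complete.
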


\subsection{Upper semi-continuity for the number of physical measures\label{ss.uppersemicontinuous}}
The following result was proved as \cite[Theorem B]{Y16}:

\begin{proposition}\label{p.uppersemicontinuous}
Let $f$ be a $\C^{1+}$ partially hyperbolic diffeomorphism with mostly contracting center.
Suppose $f$ has $k$ physical measures, then there is a $\C^1$ neighborhood $\cU$
of $f$, such that for every $\C^{1+}$ diffeomorphism $g\in \cU$, $g$ has mostly contracting
center, and the number of physical measures of $g$ is at most  $k$.
\end{proposition}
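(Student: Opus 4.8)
The plan is to prove Proposition~\ref{p.uppersemicontinuous} by combining the structural results about (pre-)skeletons with the upper semi-continuity of $\G^u$ (equivalently, of the number of ergodic Gibbs $u$-states) established earlier. First I would recall from Proposition~\ref{p.skeleton} that since $f$ is $C^{1+}$ with mostly contracting center, $f$ admits a skeleton $\cS=\{p_1,\dots,p_k\}$, and by Proposition~\ref{p.mostlycontracting} and Proposition~\ref{p.skeleton} the number of physical measures of $f$ equals $k=\#\cS$. The saddles $p_i$ have maximum index and persist as hyperbolic saddles $p_i(g)$ of the same index for all $g$ in a $C^1$ neighborhood $\cU_0$ of $f$. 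By Lemma~\ref{l.preskeleton}, shrinking $\cU_0$ if necessary, $\cS(g)=\{p_1(g),\dots,p_k(g)\}$ is a pre-skeleton for every $g\in\cU_0$.

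Next I would handle the fact that $g$ still has mostly contracting center: since mostly contracting is a $C^1$ open property (this is cited in the introduction as a result of~\cite{Y16}, and indeed is the content of~\cite[Theorem B]{Y16} whose conclusion we are reproving), there is a $C^1$ neighborhood $\cU\subset\cU_0$ of $f$ such that every $C^{1+}$ diffeomorphism $g\in\cU$ has mostly contracting center. Then for such $g$, Proposition~\ref{p.skeleton} applies again: $g$ admits a skeleton, and the number of physical measures of $g$ equals the cardinality of any skeleton of $g$. By Lemma~\ref{l.preskeletoncontainskeleton}, the pre-skeleton $\cS(g)$ contains a skeleton $\cS'(g)\subseteq\cS(g)$, so $\#\cS'(g)\le \#\cS(g)=k$. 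Since all skeletons of $g$ have the same cardinality, the number of physical measures of $g$ is exactly $\#\cS'(g)\le k$, which is the desired bound.

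The one subtlety I would be careful about is the appeal to openness of the mostly contracting property, because that is precisely the statement we are trying to establish (Proposition~\ref{p.uppersemicontinuous} is quoted as \cite[Theorem B]{Y16}). In a self-contained treatment the argument would instead go: by upper semi-continuity of $\G^u$ (Proposition~\ref{p.Guppercontinuous}) together with the identification $\G^u(g)=\Gibb^u(g)$ for $C^{1+}$ diffeomorphisms (Ledrappier~\cite{L84}), any weak* limit of Gibbs $u$-states $\mu_n$ of a sequence $g_n\to f$ is an invariant measure $\mu$ of $f$ lying in $\G^u(f)=\Gibb^u(f)$; since $f$ has mostly contracting center, $\mu$ has negative center exponent, and by the continuity of the center bundle and of the integrand $\log\det(Tg\mid_{E^c})$ one deduces that for $g$ close to $f$ all Gibbs $u$-states of $g$ still have center exponent bounded away from $0$ by a uniform negative constant. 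This gives both that $g\in\cU$ has mostly contracting center and, via the upper semi-continuity of the set of ergodic Gibbs $u$-states, that their number does not exceed $k$. The main obstacle is formalizing this last uniform-negativity step: one needs to pass from "every limit measure has negative exponent" to "a uniform negative bound holds on a whole neighborhood," which is a routine compactness argument but must be stated carefully since the Gibbs $u$-states themselves vary with $g$ and their supports may move; here Proposition~\ref{p.Guppercontinuous} does exactly the needed bookkeeping.
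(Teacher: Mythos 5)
Your first paragraph reproduces what the paper itself offers for this proposition, namely a sketch of the argument in~\cite{Y16}: the physical measures correspond to a skeleton $\cS$, the continuation $\cS(g)$ is a pre-skeleton (Lemma~\ref{l.preskeleton}), which contains a skeleton of $g$ of cardinality $\le k$ (Lemma~\ref{l.preskeletoncontainskeleton}), and for a $C^{1+}$ mostly contracting $g$ the skeleton cardinality equals the number of physical measures (Proposition~\ref{p.skeleton}). So that part is faithful to the paper. You also correctly put your finger on the one thing that the paper's sketch leaves implicit and that the skeleton bookkeeping does not by itself deliver: one must first know that $g$ has mostly contracting center before Proposition~\ref{p.skeleton} applies to $g$. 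That is a genuine and worthwhile observation, since the openness of the mostly contracting property is itself part of the content of~\cite[Theorem~B]{Y16}.

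The issue is with your proposed self-contained patch. You argue that, by upper semi-continuity of $\G^u$, a weak* limit of Gibbs $u$-states of $g_n\to f$ is a Gibbs $u$-state of $f$, and then invoke continuity of $\int\log\det(Tg\mid_{E^c})\,d\mu$ to propagate negativity of the center exponent. When $\dim E^c>1$, $\int\log\det(Tf\mid_{E^c})\,d\mu$ is the \emph{sum} of the center Lyapunov exponents of $\mu$ (for ergodic $\mu$), and mostly contracting requires that \emph{every} center exponent be negative, not merely their sum. A negative value of $\int\log\det(Tg\mid_{E^c})\,d\mu$, even uniformly over $\Gibb^u(g)$, does not preclude an ergodic Gibbs $u$-state with one positive and one very negative center exponent. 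The correct quantity to control is the largest center exponent, which for ergodic $\mu$ is $\inf_n\frac1n\int\log\|Tf^n\mid_{E^c}\|\,d\mu$. The compactness argument must therefore be run on the functionals $\mu\mapsto\frac1n\int\log\|Tg^n\mid_{E^c}\|\,d\mu$: one first uses compactness of $\Gibb^u(f)$ (via Proposition~\ref{p.Guppercontinuous} and Ledrappier's identification $\G^u=\Gibb^u$) together with sub-additivity of $n\mapsto\log\|Tf^n\mid_{E^c}\|$ to extract a single finite $N$ and $c>0$ with $\frac1N\int\log\|Tf^N\mid_{E^c}\|\,d\mu<-c$ for every $\mu\in\Gibb^u(f)$, and only then invokes joint continuity in $(g,\mu)$ to extend this bound to a $C^1$ neighborhood. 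Also note that a limit measure $\mu$ is in general non-ergodic, so the phrase ``$\mu$ has negative center exponent'' should be replaced by a statement about the ergodic components of $\mu$, which are themselves Gibbs $u$-states by Proposition~\ref{p.Gibbsustates}(1). As written the patch does not close the gap; with the replacement of $\log\det$ by $\log\|\cdot\|$ and the sub-additivity/compactness step made precise, it would.
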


Let us briefly explain the proof of this proposition in~\cite{Y16}. Recall that if $g$ is $C^{1+}$, then the space of Gibbs $u$-state of $g$, denoted by $\Gibb^u(g)$, coincides with $\G^u(g)$. Furthermore, each ergodic Gibbs $u$-state is a physical measure supported on $\overline{\cF^u(\Orb(p))}$ for some hyperbolic  periodic point $p$. Moreover, the collection of  such  periodic points forms a skeleton.  Then it is shown that the cardinality of the skeleton for $C^{1+}$ diffeomorphisms  varies upper semi-continuously with respect to the $C^1$ topology. In particular, the number of ergodic measures in $\Gibb^u(g)$ is bounded by the number of ergodic measures in $\Gibb^u(f)$, for $g$ in a $C^1$ neighborhood of $f$.

Before stating the main theorem of this section, we need the following definition:
\begin{definition}\label{df.Lyapunov}
A set $\Lambda$ of a homeomorphism $f$ is \emph{Lyapunov stable} if there is a sequence of
open neighborhoods $U_1\supset U_2\supset \cdots$ such that:
\begin{itemize}
\item[(a)] $\cap U_i=\Lambda$;
\item[(b)] $f^n(U_{i+1})\subset U_i$ for any $n\geq 0$.
\end{itemize}
\end{definition}

Given a $C^1$ partially hyperbolic diffeomorphism $f$ with mostly contracting center, Denote by $S(f)=\{p_1(f),\ldots,p_{k_f}(f)\}$ a skeleton of $f$. Now we are ready to describe the geometric structure of the physical measures of $f$ $C^1$ close to a mostly contracting system:

\begin{theorem}\label{t.mostlyoontracting}
Let $f$ be a $\C^{1+}$ partially hyperbolic diffeomorphism with mostly contracting center. Then there is a $C^1$ neighborhood $\cU$ of $f$ and a residual set $\cR\in \cU$, such that every $C^1$ diffeomorphism $g\in\cR$ has a skeleton $S(g)=\{p_1(g),\ldots,p_{k_g}(g)\}$ with $k_g\leq k_f$. Moreover, g has exactly $\#S(g)$ physical measures, each of which is supported on $\overline{\cF^u(\Orb(p_i(g)))}$ for some $i$. Furthermore, the basin of each physical measure has full volume within a small neighborhood of its support. Finally, the union of their basin covers a full volume subset of $M$.
\end{theorem}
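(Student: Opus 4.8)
The plan is to combine three ingredients already available: the new criterion for $C^1$ systems (Theorem~\ref{main.newcriterion}), the upper semi-continuity of the number of Gibbs $u$-states / skeleton cardinality for $C^{1+}$ mostly contracting systems (Proposition~\ref{p.uppersemicontinuous}), and the persistence of pre-skeletons under $C^1$ perturbation (Lemma~\ref{l.preskeleton}, Lemma~\ref{l.preskeletoncontainskeleton}). First I would fix a $C^1$ neighborhood $\cU$ of $f$ as in Proposition~\ref{p.uppersemicontinuous}, shrunk if necessary so that every $g\in\cU$ remains partially hyperbolic and so that the continuations $p_i(g)$ of the skeleton points of $f$ persist and form a pre-skeleton (Lemma~\ref{l.preskeleton}); in particular $g$ has a skeleton $S(g)$ with $\#S(g)\le k_f$ by Lemma~\ref{l.preskeletoncontainskeleton}, and since the index of each $p_i$ is maximal the stable manifolds are open-dense-transverse to $\cF^u$.

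Next I would produce the residual set $\cR\subset\cU$. The natural choice is to take $\cR$ to be the intersection of $\cU$ with the generic set where (a) $g$ is Kupka–Smale, so each $\Orb(p_i(g))$ is hyperbolic with transverse homo/heteroclinic behavior, (b) the homoclinic classes $H(p_i(g))=\overline{W^u(\Orb(p_i(g)))}$ depend continuously on $g$ and coincide with the chain recurrence classes, and (c) the cardinality $k_g=\#S(g)$ is locally constant (lower semi-continuity of $k_g$ on a residual set, combined with the upper bound $k_g\le k_f$). On this residual set the closures $\Lambda_i(g)=\overline{W^u(\Orb(p_i(g)))}$ are pairwise disjoint (no heteroclinic intersections within the skeleton) and each is Lyapunov stable in the sense of Definition~\ref{df.Lyapunov}, which gives an attracting neighborhood $U$ with $\cap_n f^n(U)=\Lambda_i(g)$.

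Then comes the measure-theoretic step. Apply Theorem~\ref{main.newcriterion} to $g\in\cR$: there is a full-volume set $\Gamma$ of points whose Cesàro limits lie in $\G^u(g)$. Since $g$ is still only $C^1$ I cannot directly identify $\G^u(g)$ with Gibbs $u$-states, but I can argue as follows. For a.e.\ $x$, $\cF^u(x)$ meets some $W^s(\Orb(p_i(g)))$ transversally, so after iterating, a positive-Lebesgue-measure piece of an unstable disk through $f^N(x)$ enters the attracting neighborhood $U_i$ of $\Lambda_i(g)$ and stays; combined with the fact that iterates of unstable disks equidistribute towards the set of Gibbs $u$-states (Proposition~\ref{p.Gibbsustates}(3), valid for $C^{1+}$, and extended to the $C^1$ residual setting using that any weak-$*$ limit in $\G^u(g)$ supported in $\Lambda_i(g)$ must — by ergodic decomposition, Proposition~\ref{p.Gu}, and Ruelle's inequality Theorem~\ref{t.Ruelle} — have its ergodic components saturated by unstable leaves inside $\Lambda_i(g)$), one concludes that $\Lambda_i(g)$ carries a unique such measure $\mu_i^g$, namely the continuation of the physical measure. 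A Borel–Cantelli / absolute-continuity-of-unstable-conditionals argument (exactly as in the proof of Theorem~\ref{main.newcriterion}, transported along the transverse disk $I$) then shows $\Leb$-a.e.\ point of $U_i$ lies in $Basin(\mu_i^g)$, giving ``full volume within a small neighborhood of the support.'' Finally, since $\cup_i W^s(\Orb(p_i(g)))$ is transverse to all unstable leaves and $\Gamma$ has full volume, Fubini along the smooth transverse foliation $\cF$ (as in Section~\ref{ss.newcriterion}) shows $\cup_i Basin(\mu_i^g)$ has full volume in $M$.

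The main obstacle I expect is precisely the $C^1$ step where one must show $\G^u(g)$ restricted to each piece $\Lambda_i(g)$ is a single measure and that it equals the ``physical'' continuation: without $C^{1+}$ regularity one loses absolute continuity of Pesin stable laminations and the standard Hopf-type argument, so one must run the separated-set / exponential-tail machinery of Section~\ref{s.proof} localized to each attracting region, using Lyapunov stability to confine the dynamics and the uniform version (Proposition~\ref{p.uuniform}, Proposition~\ref{p.strongvariation}) to get uniform-in-basepoint estimates inside $U_i$. Establishing uniqueness of the measure in $\G^u(g)$ supported on $\Lambda_i(g)$ — rather than merely finiteness — is the crux; here the genericity of $g$ (so that $\Lambda_i(g)$ is a homoclinic class with no proper invariant subcontinuum carrying a distinct Gibbs $u$-state) together with the upper semi-continuity of $\cG^u$ (Proposition~\ref{p.Guppercontinuous}) bounding the count by $k_f$, and lower semi-continuity on $\cR$ forcing equality, is what makes the argument close.
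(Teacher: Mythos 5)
Your overall architecture is close to the paper's: restrict to a $C^1$ neighborhood via Proposition~\ref{p.uppersemicontinuous}, use pre-skeleton persistence (Lemmas~\ref{l.preskeleton}, \ref{l.preskeletoncontainskeleton}) and a level-set decomposition to stabilize the skeleton count, use Lyapunov stability of $\overline{\cF^u(\Orb(p))}$ on a residual set (Proposition~\ref{p.generic}) to confine forward orbits, and finish with Theorem~\ref{main.newcriterion}. However, there is a genuine gap at what you yourself call the crux: establishing that each $\Lambda_i(g)$ carries a \emph{unique} measure of $\G^u(g)$, for merely $C^1$ $g$.

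Your proposed route through ``iterates of unstable disks equidistribute towards Gibbs $u$-states'' invokes Proposition~\ref{p.Gibbsustates}(3), which is a $C^{1+}$ statement (Gibbs $u$-states require absolutely continuous unstable conditionals, which is unavailable at $C^1$), and the extension you sketch — that ergodic components of limits in $\G^u(g)$ are ``saturated by unstable leaves inside $\Lambda_i(g)$'' — is not established and does not follow from Proposition~\ref{p.Gu} or Theorem~\ref{t.Ruelle}. Running the separated-set machinery localized inside $U_i$ also will not by itself give uniqueness of the candidate measure; it only gives that the Cesàro limits land in $\G^u(g)$ restricted to $\Lambda_i(g)$. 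The paper's decisive move, which your proposal does not make, is to include in $\cR$ the residual set of \emph{continuity points} of the map $\cG^u:g\mapsto\G^u(g)$. At such an $h$, taking $C^{1+}$ diffeomorphisms $g_n\to h$, the simplices $\G^u(g_n)=\Gibb^u(g_n)$ (each with exactly $k$ vertices on the level set $\cV_k$) converge to $\G^u(h)$, so $\G^u(h)$ is a simplex of dimension $m_h\le k$; combined with continuity of the homoclinic classes $H(p_i(h),h)$ and the pairwise disjointness of $\overline{\cF^u(\Orb(p_i(h)))}$ (obtained via the connecting lemma, as in~\eqref{eq.disjoint}), each piece supports exactly one extreme point and $m_h=k$. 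Without anchoring $\cR$ to the continuity points of $\cG^u$ and exploiting the simplex structure inherited from $C^{1+}$ approximations, the uniqueness step does not close. The Kupka–Smale genericity you add is harmless but not used in the paper; the residual conditions the paper actually needs are Lyapunov stability (Proposition~\ref{p.generic}), continuity of $\cG^u$, and continuity of the homoclinic class maps.
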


Clearly, Theorem~\ref{main.mostlycontracting} is a direct consequence of the theorem above.
\subsection{Proof of Theorem~\ref{t.mostlyoontracting}}
Let $f$ be a $C^{1+}$ partially hyperbolic diffeomorphisms with mostly contracting center and let $\cU$ be the $C^1$ neighborhood of $f$ given  by Proposition~\ref{p.uppersemicontinuous}.

Suppose $f$ has $k_f=\#S(f)$ number of  physical measures. For every $1\leq k \leq k_f$, we define the set:
$$\cU^{1+}_k=\{g\in \cU; g \text{ is } C^{1+} \text{ and the number of  physical measures of $g$ is at most $k$} \}.$$

By Proposition~\ref{p.uppersemicontinuous}, the set $\cU^{1+}_k$ is $C^1$ open. We may take $C^1$ open sets $\cU_k$ such that
$\cU^{1+}_k=\cU_k\cap \Diff^{1+}$. Let $\cV_k=(\cU_k\setminus \overline{\cU_{k-1}})$ be the level set of $\cU_k$, then any $C^{1+}$ diffeomorphism
$g\in \cV_k$ has exactly $k$ physical measures. First, we prove the following proposition:

\begin{proposition}
For each $1\leq k\leq k_f$ and  every $C^1$ diffeomorphism $g\in \cV_k$, $g$ has a skeleton $S(g)$ with exactly $k$ elements.
\end{proposition}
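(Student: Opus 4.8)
The plan is to show that a $C^1$ diffeomorphism $g\in\cV_k$ has a pre-skeleton of exactly $k$ elements and that it contains no heteroclinic intersections, so it is itself a skeleton. First I would take a sequence of $C^{1+}$ diffeomorphisms $g_j\in\cV_k$ with $g_j\to g$ in the $C^1$ topology; this is possible since $\cV_k$ is $C^1$ open and $C^{1+}$ diffeomorphisms are $C^1$ dense. Each $g_j$ has exactly $k$ physical measures, and by Proposition~\ref{p.skeleton} each admits a skeleton $\cS(g_j)=\{p_1(g_j),\dots,p_k(g_j)\}$ of $k$ saddles of maximal index, whose $W^u(\Orb(p_i(g_j)))$-closures are the supports of the physical measures. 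By the uniform hyperbolicity of the saddles (bounded away from non-hyperbolicity along the sequence, using that the index is fixed and $\cU$ is fixed), passing to a subsequence I can assume each $p_i(g_j)$ converges to a hyperbolic periodic point $p_i$ of $g$ of the same period and index, and that $p_i(g_j)$ is the continuation $p_i(g_j)=(p_i)(g_j)$ for large $j$.

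Next I would verify that $\{p_1,\dots,p_k\}$ is a pre-skeleton for $g$, i.e. satisfies condition (i). This is where I would invoke Lemma~\ref{l.preskeleton}: the continuation of a pre-skeleton stays a pre-skeleton in a $C^1$ neighborhood, so since $\cS(g_j)$ is a pre-skeleton of $g_j$ and $g$ is $C^1$-close to $g_j$ (for large $j$), $\{p_1(g),\dots,p_k(g)\}=\{p_1,\dots,p_k\}$ is a pre-skeleton of $g$. By Lemma~\ref{l.preskeletoncontainskeleton} it contains a skeleton $S(g)$, so $g$ has a skeleton with at most $k$ elements; combined with Proposition~\ref{p.uppersemicontinuous}-type control this already gives $k_g\le k$. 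The substantive point is the reverse inequality: $S(g)$ has \emph{exactly} $k$ elements. If $S(g)$ had fewer than $k$ elements, then the pre-skeleton $\{p_1,\dots,p_k\}$ of $g$ would have a heteroclinic intersection, say $W^s(\Orb(p_a))\pitchfork W^u(\Orb(p_b))\neq\emptyset$ with $a\neq b$. Such a transverse intersection is a $C^1$-open condition, so it persists for $g_j$ with $j$ large, contradicting condition (ii) for the skeleton $\cS(g_j)$. Hence no heteroclinic intersections occur among $\{p_1,\dots,p_k\}$, so this set is itself a skeleton and $k_g=k$.

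The main obstacle I anticipate is the bookkeeping needed to guarantee that the $k$ limiting saddles $p_1,\dots,p_k$ are genuinely \emph{distinct} periodic orbits of $g$ (so that one does not secretly lose elements in the limit) and that each carries the ``no heteroclinic'' persistence argument cleanly. Distinctness should follow from condition (ii) for $g_j$: if $p_a(g_j)$ and $p_b(g_j)$ converged to the same orbit of $g$, then for large $j$ they would lie in a common small neighborhood, and an Inclination/$\lambda$-lemma argument would force $W^s(\Orb(p_a(g_j)))$ and $W^u(\Orb(p_b(g_j)))$ to intersect, contradicting (ii). Once distinctness and the open-ness of transverse heteroclinic intersections are in hand, the rest is a routine continuation argument assembled from the lemmas of~\cite{DVY} quoted above.
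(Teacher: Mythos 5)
Your overall strategy is the same as the paper's --- fix a nearby $C^{1+}$ diffeomorphism in $\cV_k$, obtain a pre-skeleton of $g$ with $k$ elements from its skeleton, and then argue by contradiction that the pre-skeleton is already a skeleton --- but your execution has two gaps, one substantive and one stylistic.

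The substantive gap is the opening limit step. You take a sequence $g_j\to g$ in $\cV_k$ and claim that, after a subsequence, the saddles $p_i(g_j)$ in the skeletons $\cS(g_j)$ converge to hyperbolic periodic points $p_i$ of $g$ ``by the uniform hyperbolicity of the saddles''. There is no a priori reason for the orbits in $\cS(g_j)$ to have uniformly bounded periods or uniformly bounded hyperbolicity as $j\to\infty$: distinct $g_j$'s may have skeletons built from completely unrelated periodic orbits, and Proposition~\ref{p.uppersemicontinuous} controls only the cardinality of the skeletons, not the orbits themselves. Without a bound on the periods, the limits need not even be periodic. The paper avoids this entirely by fixing \emph{one} $C^{1+}$ reference $g_0\in\cV_k$ and, after shrinking $\cU$, taking the analytic continuations $\{p_1(g),\dots,p_k(g)\}$ of the $k$ saddles in $\cS(g_0)$; by Lemma~\ref{l.preskeleton} this is automatically a pre-skeleton of every $g\in\cV_k$, with no limits to take and no hyperbolicity to control. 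Your sequence setup is therefore both unjustified and unnecessary: a single reference map suffices, and once you switch to that, the rest of your argument becomes sound.

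The second, smaller issue is the transversality of the alleged heteroclinic intersection. Condition~(ii) in the definition of a skeleton only says $W^s(\Orb(p_a))\cap W^u(\Orb(p_b))=\emptyset$; its failure gives you an intersection, not a priori a \emph{transverse} one, yet your persistence argument needs transversality. This is in fact automatic here, but it deserves a sentence: because the saddles have maximum index, $W^u(\Orb(p_b))$ is a union of strong unstable leaves, hence tangent to $E^u$; and for $x\in W^s(\Orb(p_a))$, pushing forward shows $T_xW^s(\Orb(p_a))$ cannot contain a nonzero vector of $E^u(x)$ (any such vector would, under $Tf^n$, have its direction accumulating simultaneously on $E^{cs}$ and $E^u$ at the periodic orbit). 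Since the dimensions are complementary, every intersection point is transverse, and so persists under $C^1$ perturbation. Alternatively, you can bypass transversality altogether as the paper does: if $g$'s skeleton has $k'<k$ elements, then Lemma~\ref{l.preskeleton} shows its continuation is a pre-skeleton of all nearby maps, hence by Lemma~\ref{l.preskeletoncontainskeleton} their skeletons have at most $k'$ elements; choosing a nearby $C^{1+}$ map $h\in\cV_k$ and applying Proposition~\ref{p.skeleton} yields the same contradiction without any invariant-manifold analysis. Either route closes the argument once the reference-map issue is fixed.
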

\begin{proof}
Fix a $C^{1+}$ diffeomorphism $g_0\in \cV_k$. By the definition of $\cV_k$ and Proposition~\ref{p.skeleton}, $g_0$ has a skeleton with $k$ elements, denote by   $\{p_1(g_0),\cdots,p_k(g_0)\}$.
Shrink $\cU$ if necessary, we may assume that for any $C^1$ diffeomorphism  $g\in \cV_k$, the analytic continuation of periodic points
$\{p_1(g),\cdots, p_k(g)\}$ forms a pre-skeleton of $g$.

We claim $\{p_1(g),\cdots, p_k(g)\}$ is indeed a skeleton of $g$. Otherwise, by Lemma~\ref{l.preskeletoncontainskeleton}, 
the skeleton of $g$ has number of elements strictly less  than $k$, so do diffeomorphisms $C^1$ close to it. Since $\cV_k$ is open, we can take a $C^{1+}$ diffeomorphism $h\in \cV_k$ sufficiently close to $g$. Then the skeleton of $h$ has strictly less than $k$ elements, thus $h$ has strictly less than $k$ physical measures, thanks to Proposition~\ref{p.skeleton}. This contradicts with the definition of $\cV_k$.

\end{proof}

This shows that for $g\in \cV_k$ and $1\leq i \neq j \leq k$, there is $\delta>0$ such that in the $\delta$ neighborhood ($C^1$ topology) of $g$, we have $\cF^u(\Orb(p_i))\cap W^s(\Orb(p_j))=\emptyset$. Using the  connecting lemma, we obtain that for $1\leq i \neq j \leq k$,
\begin{equation}\label{eq.disjoint}
\overline{\cF^u(\Orb(p_i))}\cap \overline{\cF^u(\Orb(p_j))}=\emptyset.
\end{equation}

We need the following generic property proved by Morales and Pacifico~\cite{MP}:

\begin{proposition}\label{p.generic}
For every $f$ that belongs to a $C^1$ residual subset of diffeomorphisms $\cR_0$ and every  periodic point $p$ of $f$, the set $\overline{\cF^u(\Orb(p))}$
is  Lyapunov stable.
\end{proposition}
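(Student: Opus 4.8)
The statement is due to Morales and Pacifico~\cite{MP}; here is the line of argument I would follow. The plan is to realize $\overline{\cF^u(\Orb(p))}$ as the value of a lower semicontinuous map of $f$, to use that such maps are continuous at the points of a residual set, and then to convert any failure of Lyapunov stability at a continuity point into a contradiction by means of a connecting lemma. First I would fix a periodic point $p$ and, restricting to a $C^1$-open set of diffeomorphisms on which $p$ admits a hyperbolic continuation $p(g)$, consider the map $\Phi_p(g)=\overline{\cF^u_g(\Orb(p(g)))}$, with values in the space $\mathcal K(M)$ of compact subsets of $M$ equipped with the Hausdorff metric. Since $E^u$ is uniformly expanding, $\cF^u_g(\Orb(p(g)))$ is the increasing union of the forward iterates of a fixed compact union $D_g$ of uniform-size unstable disks centered at $\Orb(p(g))$, and $D_g$ depends continuously on $g$ in the $C^1$ topology by the continuous dependence of the strong unstable foliation. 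Hence, for each fixed $N$, the set $g^N(D_g)$ varies continuously with $g$, so every point of $\cF^u_f(\Orb(p))$ is a limit of points of $\cF^u_g(\Orb(p(g)))$ as $g\to f$; passing to closures shows that $\Phi_p$ is lower semicontinuous, and it is classical that such a map is continuous on a residual subset $\mathcal R_p$ of its domain.

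Next I would show that for $f\in\mathcal R_p$ the compact invariant set $\Lambda=\overline{\cF^u(\Orb(p))}$ is Lyapunov stable. It suffices to prove that every open $U\supset\Lambda$ contains an open $V$ with $\Lambda\subset V\subset U$ and $f^n(V)\subset U$ for all $n\ge 0$: the nested sequence of Definition~\ref{df.Lyapunov} is then built inductively, intersecting each stage with $B(\Lambda,1/i)$ so that $\bigcap_i U_i=\Lambda$. If this failed for some $U$, there would be points $x_k$ with $d(x_k,\Lambda)\to 0$ and least times $m_k$ such that $f^j(x_k)\in U$ for $j<m_k$ but $f^{m_k}(x_k)\in\partial U$; since $\Lambda\subset U$ is open one checks that $m_k\to\infty$, and along a subsequence $x_k\to x_\infty\in\Lambda$ while $f^{m_k}(x_k)\to y_\infty$ with $d(y_\infty,\Lambda)\ge\operatorname{dist}(\partial U,\Lambda)=:\varepsilon_0>0$. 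Because $\cF^u(\Orb(p))$ is dense in $\Lambda$ it passes arbitrarily close to $x_\infty$, hence to $x_k$ for large $k$, while the orbit of $x_k$ reaches a neighborhood of $y_\infty$ in $m_k$ steps. Applying a connecting lemma of Hayashi type, adapted to the expanding bundle $E^u$ so that the newly created orbit segment lies in the strong unstable leaf of the continuation of $\Orb(p)$, I would obtain $g$ arbitrarily $C^1$-close to $f$ for which $\cF^u_g(\Orb(p(g)))$ meets a small neighborhood of $y_\infty$, so that $\Phi_p(g)$ contains a point at distance at least $\varepsilon_0/2$ from $\Lambda$. For $g$ close enough to $f$ this contradicts the continuity of $\Phi_p$ at $f$, which forces $\Phi_p(g)\subset B(\Lambda,\varepsilon_0/4)$. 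Hence $\Lambda$ is Lyapunov stable. This perturbation step --- turning an escaping orbit segment into an actual piece of the strong unstable leaf of the continued periodic orbit by a $C^1$-small perturbation keeping the system partially hyperbolic --- is the technical heart of the argument and the main obstacle.

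Finally, to obtain a single residual set valid for all periodic points, I would fix $N$, restrict to the $C^1$-open and dense set of diffeomorphisms all of whose periodic orbits of period at most $N$ are hyperbolic, cover it by countably many open sets on each of which these finitely many orbits continue, and intersect the corresponding residual sets $\mathcal R_p$ to obtain a residual set $\mathcal R_N$; then $\mathcal R_0=\big(\bigcap_N\mathcal R_N\big)\cap(\text{Kupka--Smale residual set})$ is residual, and every $f\in\mathcal R_0$ has $\overline{\cF^u(\Orb(p))}$ Lyapunov stable for each of its periodic points $p$.
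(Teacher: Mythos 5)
The paper does not prove this proposition; it invokes it as a black box from Morales--Pacifico~\cite{MP}, so there is no ``paper's proof'' to compare against. Your outline follows the standard genericity scheme for results of this type: realize $f\mapsto\overline{\cF^u_f(\Orb(p(f)))}$ as a lower semicontinuous compact-set--valued map, restrict to its residual set of continuity points, and at a continuity point derive a contradiction from failure of Lyapunov stability via a $C^1$-small perturbation. That overall architecture is correct, and the lower semicontinuity step (via $\cF^u_g(\Orb(p(g)))=\bigcup_N g^N(D_g)$ with $D_g$ continuous in $g$) is sound, as is the bookkeeping in the last paragraph producing one residual set $\cR_0$ for all periodic orbits.

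The genuine gap is in the perturbation step, and it is not merely a technicality you can defer. You invoke ``a connecting lemma of Hayashi type, adapted to the expanding bundle $E^u$ so that the newly created orbit segment lies in the strong unstable leaf of the continuation of $\Orb(p)$.'' No version of the connecting lemma gives you control over which invariant manifold the perturbed orbit segment belongs to; connecting lemmas produce orbits, not pieces of prescribed leaves, and the ``adaptation'' you describe is not a known statement. The argument that actually closes this step is different: once the connecting lemma produces a $g$-orbit passing from a small neighborhood of $x_\infty$ (hence from a small neighborhood of a point of $W^u_g(p(g))$, which is available because $W^u_f(\Orb(p))$ is dense in $\Lambda$) to a neighborhood of $y_\infty$, one then applies the inclination ($\lambda$-) lemma, not a modified connecting lemma, to conclude that $y_\infty$ lies in $\overline{W^u_g(\Orb(p(g)))}$. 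This works when $p$ has maximal index, since then $W^u=\cF^u$; that is exactly the situation in which the paper applies the proposition (skeleton points). For a periodic point of non-maximal index the set $\cF^u(\Orb(p))$ is a proper subset of $W^u(\Orb(p))$, and the inclination lemma produces accumulation of $W^u$, not of the strong unstable leaf, so the argument as stated does not establish the statement for ``every periodic point $p$.'' You should either replace the ad hoc ``adapted connecting lemma'' with the connecting lemma plus $\lambda$-lemma pair, or explicitly restrict to maximal-index periodic points (which suffices for the paper's use), and in either case flag the $\cF^u$ versus $W^u$ discrepancy rather than sweeping it into the perturbation step.
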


Recall that the map $\cG^u$ maps a diffeomorphism $g$ to $\G^u(g)$ and is upper semi-continuous by Proposition~\ref{p.Guppercontinuous}. 
Let $\cR_1\subset \cV_k$ be the residual subset of diffeomorphisms which are continuous points of the map $\cG^u$.
For each $1\leq i \leq k$, also consider the map
$\Gamma_i$ from $\cV_k$ to compact subsets of $M$:
$$\Gamma_i(g)=H(p_i(g),g),$$
where $H(p_i(g),g)$ is the homoclinic class of $p_i(g)$.

Because homoclinic classes vary  lower semi-continuously with respect to diffeomorphisms (since they  contain hyperbolic horseshoes), there is a residual subset of diffeomorphisms
$\cR_2\subset \cV_k$ consists of the continuous points of $\Gamma_i$ for every $1\leq i \leq k$.

Now let us take $\cR=\cR_0\cap \cR_1\cap \cR_2\subset \cV_k$. For each $C^1$ diffeomorphism $h\in\cR$, let
$$
S(h)=\{p_1(h),\ldots,p_k(h)\} 
$$
be a skeleton of $h$. 
We are going to show that the  residual set $\cR$ satisfies the condition we need.

\begin{proposition}
Every $C^1$ diffeomorphism $h\in\cR\subset\cV_k$ has exactly $k$ physical measure, each of which is supported on $\overline{\cF^u(\Orb(p_i(h)))}$ for some $i=1,\ldots, k$. Furthermore, the basin of each physical measure covers a full volume subset within a neighborhood of its support.
\end{proposition}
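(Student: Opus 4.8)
The plan is to combine the $C^1$ structural results on skeletons with the full-volume criterion from Theorem~\ref{main.newcriterion}. Fix $h \in \cR \subset \cV_k$ and let $S(h) = \{p_1(h), \ldots, p_k(h)\}$ be its skeleton; by~\eqref{eq.disjoint} the sets $\Lambda_i := \overline{\cF^u(\Orb(p_i(h)))}$ are pairwise disjoint, and by Proposition~\ref{p.generic} (using $h \in \cR_0$) each $\Lambda_i$ is Lyapunov stable. First I would observe that each $\Lambda_i$ is $\cF^u$-saturated (it is a closure of a union of full unstable leaves) and that, since $h \in \cR_2$ is a continuity point of the homoclinic-class map $\Gamma_i$, we have $\Lambda_i = H(p_i(h), h)$ up to the usual identification; approximating $h$ by $C^{1+}$ diffeomorphisms $g_n \in \cV_k$ and using Proposition~\ref{p.skeleton} together with the upper semicontinuity of $\cG^u$ (Proposition~\ref{p.Guppercontinuous}), any limit of Gibbs $u$-states of the $g_n$ supported near $\Lambda_i$ lies in $\G^u(h)$ and is supported in $\Lambda_i$. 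This shows $\G^u(h)$ decomposes as a disjoint union over $i$ of the nonempty compact convex sets $\G^u(h) \cap \cM_{\inv}(h|_{\Lambda_i})$.

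Next I would apply the main criterion locally. By Lyapunov stability, choose nested neighborhoods $U_1^{(i)} \supset U_2^{(i)} \supset \cdots$ of $\Lambda_i$ with $\bigcap_m U_m^{(i)} = \Lambda_i$ and $h^n(U_{m+1}^{(i)}) \subset U_m^{(i)}$ for all $n \geq 0$; since the $\Lambda_i$ are disjoint we may also take the $U_1^{(i)}$ disjoint. For a point $x \in U_{m+1}^{(i)}$, the entire forward orbit stays in $U_m^{(i)}$, so every Cesaro limit of $\frac1n\sum_{j=0}^{n-1}\delta_{h^j(x)}$ is an invariant measure supported in $U_m^{(i)}$, hence (letting $m \to \infty$ along a point $x$ whose full orbit is trapped) supported in $\Lambda_i$. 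Combining this with Theorem~\ref{main.newcriterion}: there is a full-volume set $\Gamma$ such that every Cesaro limit of every $x \in \Gamma$ lies in $\G^u(h)$; for $x \in \Gamma$ that is eventually trapped in $U_1^{(i)}$, the limit lies in $\G^u(h) \cap \cM_{\inv}(h|_{\Lambda_i})$.

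Then I would upgrade ``eventually trapped, limits in the right piece'' to convergence to a single physical measure. On $\Lambda_i$, the diffeomorphism $h$ restricted to the attractor behaves like a mostly contracting system in the $C^1$-generic sense: I would argue that $\G^u(h) \cap \cM_{\inv}(h|_{\Lambda_i})$ in fact consists of a single ergodic measure $\mu_i$, using that $\Lambda_i = H(p_i(h),h)$ is a homoclinic class (transitivity), that the Gibbs $u$-states on it have negative center exponent by the $C^{1+}$ approximation and upper semicontinuity of partial entropy, and the uniqueness argument from Proposition~\ref{p.mostlycontracting}/\ref{p.skeleton} transported to the $C^1$ setting via the convexity and affineness of $P_\mu$ (Proposition~\ref{p.Gu}). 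Granting this, Remark~\ref{rk.uniqueG} applied to $h|_{\Lambda_i}$ (or directly: a full-volume-in-a-neighborhood set has Cesaro limits equal to the unique element of $\G^u \cap \cM_{\inv}(h|_{\Lambda_i})$) yields that $\mu_i$ is a physical measure whose basin has full volume in a neighborhood of $\Lambda_i$. Finally, since the skeleton condition (i) says every unstable leaf meets some $W^s(\Orb(p_i))$ transversally, and $h^n(I)$ becomes almost tangent to $E^u$ for $I$ transverse to $E^{cs}$, a $\lambda$-lemma argument shows volume-almost every $x$ is eventually drawn into some $U_1^{(i)}$; hence the basins of $\mu_1, \ldots, \mu_k$ cover a full-volume subset of $M$, and there are no others because any physical measure lies in $\G^u(h)$ by Theorem~\ref{main.newcriterion} and thus in some $\Lambda_i$, forcing it to equal $\mu_i$.

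The main obstacle I expect is the third step: proving that $\G^u(h) \cap \cM_{\inv}(h|_{\Lambda_i})$ reduces to a single ergodic measure in the merely $C^1$ setting. In the $C^{1+}$ case this is the absolute continuity of unstable conditionals plus ergodicity of Gibbs $u$-states on a homoclinic class; for $C^1$ we lack Gibbs $u$-states and must instead run the argument purely at the level of the functional $P_\mu = h_\mu(h,\cF^u) + \int \phi^u\,d\mu$, its upper semicontinuity and affineness, and the generic properties encoded in $\cR_1$ and $\cR_2$ — controlling that two distinct ergodic maximizers cannot coexist on the same homoclinic class will require the skeleton machinery (no heteroclinic intersections, Lemma~\ref{l.preskeletoncontainskeleton}) to be pushed through carefully, and is where the residual (rather than open) nature of $\cR$ is genuinely used.
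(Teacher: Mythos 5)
You have correctly identified the skeleton of the argument (disjointness of $\Lambda_i$ via~\eqref{eq.disjoint}, Lyapunov stability from $\cR_0$, trapping of forward orbits, and applying Theorem~\ref{main.newcriterion} to a full-volume set near each $\Lambda_i$), and you have also correctly located the crux: you need to know that $\G^u(h)$ restricted to each $\Lambda_i$ carries a single ergodic measure. But you have flagged that step as an ``obstacle'' and proposed to attack it head-on via mostly-contracting-type arguments transported to the $C^1$ setting — and that is precisely where the proposal does not close. In the $C^1$ world there are no Gibbs $u$-states, no absolute continuity of the Pesin stable lamination, and no ergodicity theorem on homoclinic classes to lean on; the affineness/upper semicontinuity of $P_\mu$ alone does not rule out two distinct ergodic maximizers supported on the same $\Lambda_i$.

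The paper avoids your obstacle entirely by using the residual set $\cR_1$, which you mention but do not actually exploit. Because $h\in\cR_1$ is a \emph{continuity} point of the map $\cG^u:g\mapsto\G^u(g)$, and because for every $C^{1+}$ $g_n\in\cV_k$ the set $\G^u(g_n)=\Gibb^u(g_n)$ is the $k$-simplex spanned by the $k$ physical measures of $g_n$, one gets that $\G^u(h)$ is the convex hull of at most $k$ limit measures $\mu_{h,i}=\lim_n\mu_{g_n,i}$. Since ergodic components of any $\mu\in\G^u(h)$ again lie in $\G^u(h)$ (Proposition~\ref{p.Gu}), extreme points are ergodic, so $\G^u(h)$ has at most $k$ ergodic elements. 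Lower semicontinuity of homoclinic classes ($\cR_2$) places $\mu_{h,i}$ inside $\overline{\cF^u(\Orb(p_i(h)))}$, and the disjointness~\eqref{eq.disjoint} plus another ergodic-component argument forces the number of extreme points to be exactly $k$, one per $\Lambda_i$. At that point uniqueness on each $\Lambda_i$ is automatic, and the Lyapunov-stability trapping plus Theorem~\ref{main.newcriterion} (as you outlined) finishes the basin statement. So the counting is done globally via continuity of $\cG^u$, not locally via a mostly-contracting uniqueness argument on each attractor.

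Two smaller points. First, your claim that $\Lambda_i=H(p_i(h),h)$ ``up to the usual identification'' is stronger than needed and not established; what is used is only $\lim_n H(p_i(g_n),g_n)=H(p_i(h),h)\subset\overline{\cF^u(\Orb(p_i(h)))}$. Second, the assertion that $\G^u(h)$ ``decomposes as a disjoint union over $i$'' is wrong as stated: $\G^u(h)$ is convex, hence contains nontrivial mixtures across different $\Lambda_i$; what decomposes is its set of ergodic elements. Finally, your $\lambda$-lemma argument for global full-volume basins belongs to the separate Lemma~\ref{l.fullbasin}, not to this proposition, which only asserts full volume of each basin within a neighborhood of its support.
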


\begin{proof}
For any $C^{1+}$ diffeomorphism $g\in \cV_k$, denote by 
$$\mu_{g,1},\cdots, \mu_{g,k}$$
the ergodic physical measures of $g$. Then $\G^u(g)=\Gibb^u(g)$ is the simplex generated by $\{\mu_{g,1},\cdots, \mu_{g,k}\}. $

Then for any $h\in \cR$, by continuity of $\cG^u$, we see that $\G^u(h)=\cG^u(h)$ is also a simplex of dimension  $m_h\leq k$. In particular, the number of extreme elements of $\G^u(h)$ is at most $k$.

Denote the extreme points of $G^u(h)$ by $\mu_{h,1};\cdots, \mu_{h,m_h}$.
Moreover, take $g_n$ a sequence of $C^{1+}$ diffeomorphisms converge to $h$, by continuity and relabeling if necessary,  we may assume that  $\lim \mu_{g_n,i}=\mu_{h,i}$ for $i=1,2,\ldots, m_h$.

Note that $\mu_{h,i}$ is supported on $\overline{\cF^u(\Orb(p_i(h)))}$. This is because $\mu_{g_n,i}$ is supported on $\overline{\cF^u(\Orb(p_i(g_n)))}=H(p_i(g_n),g_n)$,
and $$\lim_n H(p_i(g_n),g_n)=H(p_i(h),h)\subset \overline{\cF^u(\Orb(p_i(h)))}.$$

Next, we claim that $m_h=k$, in other words, the dimension of $\G^u(h)$ is indeed $k$. Assume that this is not the case. Then we take $m_h<j\leq k$ and take a weak* limit of $\mu_{h,j}=\lim_n\mu_{g_n,j}$. Note that $\mu_{h,j}\in \G^u(h)$ is supported on $\overline{\cF^u(\Orb(p_j(h)))}$ by the discussion above. Take any ergodic component $\tilde{\mu}_{h,j}$ of $\mu_{h,j}$, then $\tilde{\mu}_{h,j}\in \G^u(h)$ by Proposition~\ref{p.Gu} and is still supported on $\overline{\cF^u(\Orb(p_j(h)))}$.  By~\eqref{eq.disjoint}, $\tilde{\mu}_{h,j}\ne \mu_{h,i}$ for every $i=1,\ldots, m_h$. We have thus created a new extreme point of $\G^u(h)$, which is a contradiction.

To finish the proof, we have to shows that each $\mu_{h,i}$ is a physical measure. 

Since $\overline{\cF^u(\Orb(p_i(h)))}$ are Lyapunov stable, we can take $U_i\supset V_i$ disjoint open neighborhoods for each $\overline{\cF^u(\Orb(p_i(h)))}$, such that $f^n(V_i)\subset U_i$ for any $n>0$.

By Theorem~\ref{main.newcriterion}, there is a full volume subset $\Gamma_i\subset V_i$ such that for
any $x\in \Gamma_i$, any limit $\mu$ of the sequence $\frac{1}{n}\sum_{j=0}^{n-1}\delta_{f^j(x)}$
belongs to $\G^u(h)$. 
Note that since  $x\in V_i$, we have $f^n(x)\in U_i$ for all $n\geq 1$. As a result, $\mu$ is supported on $U_i$. On the other hand, $\mu_{h,i}$
is the only ergodic measure in $\G^u(h)$ that is supported on $U_i$. It follows that  $\mu=\mu_{h,i}$. 
This implies that Lebesgue almost every point of $x\in V_i$ belongs to the basin of $\mu_{h,i}$.

The proof is complete.
\end{proof}

So far we have shown that $C^1$ generic $h\in\cV_k$ has exactly $k$ physical measures. Take the union of $\cV_k$ over $k=1,\ldots,k_f$, we conclude that $C^1$ generic $h\in\cU$ has at most $k_f$ physical measures. To finish the proof of Theorem~\ref{t.mostlyoontracting}, it remains to show that: 

\begin{lemma}\label{l.fullbasin}
The basins of $\mu_{h,i}$ for $i=1,\cdots, k$ covers a  full volume set.
\end{lemma}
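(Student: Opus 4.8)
The plan is to combine the local full-volume statements already obtained for each $V_i$ with the new criterion (Theorem~\ref{main.newcriterion}) applied globally, using Lyapunov stability to propagate basins forward. Recall that we have produced, for each $i = 1,\dots,k$, disjoint open neighborhoods $V_i \subset U_i$ of $\overline{\cF^u(\Orb(p_i(h)))}$ with $h^n(V_i)\subset U_i$ for all $n\ge 0$, and a full-volume subset $\Gamma_i\subset V_i$ contained in the basin of $\mu_{h,i}$. It remains to show that the set of points whose forward orbit eventually enters some $V_i$ has full volume in $M$; once a point $x$ satisfies $h^{n_0}(x)\in V_i$, the forward orbit stays in $U_i$, and applying Theorem~\ref{main.newcriterion} at $h^{n_0}(x)$ (which lies in the full-volume set $\Gamma$ intersected with $V_i$, after one more Fubini-type argument along the transverse foliation $\cF$) forces every Cesaro limit to be an element of $\G^u(h)$ supported in $U_i$, hence equal to $\mu_{h,i}$.

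The key steps, in order, are as follows. First I would invoke Theorem~\ref{main.newcriterion} for $h$ to get a full-volume set $\Gamma \subset M$ on which every Cesaro limit lies in $\G^u(h)$. Since $\G^u(h)$ is the simplex on $\{\mu_{h,1},\dots,\mu_{h,k}\}$ and each $\mu_{h,i}$ has support $\overline{\cF^u(\Orb(p_i(h)))}$, and these supports are pairwise disjoint by~\eqref{eq.disjoint}, for $x\in\Gamma$ with ergodic Cesaro limit the limit is one of the $\mu_{h,i}$. The second step is to show that for $\Leb$-a.e.\ $x$, \emph{some} forward iterate $h^{n}(x)$ lies in $\bigcup_i V_i$. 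Here I would argue by contradiction: if a positive-volume set $Z$ of points never enters $\bigcup_i V_i$, then by Poincar\'e recurrence (or directly by examining Cesaro limits on $Z\cap\Gamma$) the limit measures would have to charge the complement of $\bigcup_i \overline{\cF^u(\Orb(p_i(h)))}$; but in $\G^u(h)$ every measure is supported on $\bigcup_i \overline{\cF^u(\Orb(p_i(h)))}$, so the Cesaro limit is some $\mu_{h,i}$, whose support meets $V_i$, contradicting that the orbit of $x$ avoids $V_i$ for all future times (using that $\mu_{h,i}(V_i)>0$, so forward orbits equidistributing to $\mu_{h,i}$ must visit $V_i$ infinitely often). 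Third, having established that a.e.\ $x$ satisfies $h^{n_0(x)}(x)\in V_i$ for some $i$ and some $n_0(x)\ge 0$, I apply the transverse-disk full-volume statement: the set $\Gamma_i$ has full Lebesgue measure in $V_i$ along the transverse foliation, hence $V_i\setminus\Gamma_i$ has zero volume by Fubini; since $h$ is a diffeomorphism, $h^{-n_0}(V_i\setminus\Gamma_i)$ is also null, so $x\in h^{-n_0}(\Gamma_i)\subset \Basin(\mu_{h,i})$ for a.e.\ such $x$. Taking the countable union over $n_0$ and the finite union over $i$ gives that $\bigcup_i \Basin(\mu_{h,i})$ has full volume.

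The main obstacle I expect is the second step: ruling out a positive-volume set whose forward orbit never enters any $V_i$. The subtlety is that the Cesaro limits of such points, while lying in $\G^u(h)$ by Theorem~\ref{main.newcriterion}, are a priori only \emph{convex combinations} of the $\mu_{h,i}$, and one must extract an ergodic component supported on a single $\overline{\cF^u(\Orb(p_i(h)))}$ and then argue that forward equidistribution to $\mu_{h,i}$ is incompatible with permanently avoiding the open set $V_i$. This requires knowing $\mu_{h,i}(V_i)>0$, which holds because $\supp(\mu_{h,i}) = \overline{\cF^u(\Orb(p_i(h)))}\subset V_i$ and invariant measures give positive mass to open sets meeting their support; combined with the fact that, for a point $x$ whose empirical measures converge to $\mu_{h,i}$, the frequency of visits of $\Orb(x)$ to $V_i$ tends to $\mu_{h,i}(V_i)>0$, one gets that $\Orb(x)$ enters $V_i$, the desired contradiction. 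Alternatively, one can bypass ergodic decomposition at this point by using the uniform convergence established in Section~\ref{s.uniform}: outside $\bigcup_i V_i$, a compactness argument shows the empirical measures are eventually in a fixed neighborhood of $\G^u(h)$ that meets every $U_i$, and then the disjointness~\eqref{eq.disjoint} plus Lyapunov stability of each $\overline{\cF^u(\Orb(p_i(h)))}$ forces the orbit into one $U_i$ and ultimately into the corresponding $V_i$.
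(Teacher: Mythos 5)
Your proposal is correct and follows essentially the same line as the paper's proof: take a point in the full-volume set of Theorem~\ref{main.newcriterion} outside $\bigcup_i \mathrm{Basin}(\mu_{h,i})$, note that its Cesaro limit $\mu = \sum a_i\mu_{h,i}$ has some $a_i > 0$ so $\mu(V_i) > 0$, conclude that some forward iterate lands in $V_i$, and then use the local full-volume statement for $V_i$ together with invariance of basins to reach a contradiction. The only organizational difference is that the paper runs the contradiction at a single Lebesgue density point of $\Lambda = \Gamma\setminus\bigcup_i \mathrm{Basin}(\mu_{h,i})$ and pushes a small ball forward into $V_i$, whereas you first show a.e.\ point eventually enters some $V_i$ and then apply a Fubini/null-set pullback; both versions rest on the same two ingredients (the simplex structure of $\G^u(h)$ from Theorem~\ref{main.newcriterion}, and the Lyapunov-stable trapping regions $V_i\subset U_i$ with full-volume basin inside $V_i$).
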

\begin{proof}
Let $\Gamma$ be the full volume subset given by Theorem~\ref{main.newcriterion}. We are going to show
that $\Leb(\Gamma\setminus \bigcup_{i=1}^k Basin(\mu_{h,i}))=0$.

We prove by contradiction. Write $\Lambda=\Gamma\setminus \bigcup_{i=1}^k Basin(\mu_{h,i})$ and suppose that
$$\Leb(\Lambda)>0.$$
Let  $x\in \Lambda$ be a Lebesgue density point of $\Lambda$, which means that for any $r>0$, we have 
$\Leb(B_r(x)\cap \Lambda)>0$. Let $\mu$ be any limit of the sequence $\frac{1}{n}\sum_{i=0}^{n-1}\delta_{f^i(x)}$. Since $\mu\in \G^u(f)$,
$\mu$ can be written as a combination of $\mu_{h,i}$:
$$\mu=a_1\mu_{h,1}+\cdots + a_k \mu_{h,k},$$
where $a_1+\cdots + a_k=1$.

Suppose without loss of generality that $a_1>0$, then $\mu(U_1)\geq a_1$.
Thus there is $n>0$ such that $\frac{1}{n}\sum_{i=0}^{n-1}\delta_{f^i(x)}(V_1)>0$. In particular, there is $0\leq m \leq n-1$ such that
$f^m(x_0)\in V_1$. Take $\vep$ sufficiently small, we have 
$$f^m(B_\vep(x_0))\subset V_1\subset Basin(\mu_{h,1}).$$ 
This shows that  $f^m(B_\vep(x_0)\cap \Lambda)$ intersects with the basin of $\mu_{h,1}$ on a positive volume set. Because the basin of a measure is invariant under iteration
of $f$ and $f^{-1}$, we have $\Leb(\Lambda\cap Basin(\mu_{h,1}))> 0$, which contradicts with the choice of $\Lambda$.
\end{proof}

\section{G states of partially hyperbolic diffeomorphisms}\label{s.1d}
Throughout this section, let $f$ be a $\C^{1+}$ partially hyperbolic diffeomorphism with one dimension center bundle.
We have $\Gibb^u(f)=\G^u(f)$ as before due to Ledrappier~\cite{L84}.

In the proof we will consider another space of invariant probabilities of $f$:
\begin{definition}\label{df.G}
\begin{equation*}
\G^{cu}(f)=\{\mu\in \cM_{\inv}(f): h_\mu(f)\geq \int \log(\det(Tf\mid_{E^{cu}(x)}))d\mu(x)\}
\end{equation*}
where $E^{cu}=E^c\oplus E^u$.
\end{definition}
Note that for some $\mu\in \G^{cu}$, $\mu$ may have negative Lyapunov exponents within $E^{cu}$.

We denote by
$$\G(f)=\G^{u}(f)\cap \G^{cu}(f)$$
and observe that the space $\G(f)$ is non-empty.

\begin{proposition}\label{p.physical}
There is a full volume subset $\Gamma$ such that for any $x\in\Gamma$, any Cesaro limit of the
sequence $\frac{1}{n}\sum_{i=0}^{n-1}\delta_{f^i(x)}$ belongs to $\G(f)$.

\end{proposition}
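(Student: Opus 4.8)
The statement $\G(f)=\G^u(f)\cap\G^{cu}(f)$ suggests proving the two membership claims separately and intersecting the resulting full-volume sets. For $\G^u(f)$ we already have Theorem~\ref{main.newcriterion}: there is a full-volume set $\Gamma^u$ whose points have all Cesaro limits in $\G^u(f)$. So the work is to produce a full-volume set $\Gamma^{cu}$ whose points have all Cesaro limits in $\G^{cu}(f)$, and then take $\Gamma=\Gamma^u\cap\Gamma^{cu}$. Since a finite (or countable) intersection of full-volume sets is full-volume, this reduces everything to the $\G^{cu}$ half.

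\medskip

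\textbf{The $\G^{cu}$ half.} The key observation is that the center-unstable bundle $E^{cu}=E^c\oplus E^u$ is itself an expanding-in-the-sense-of-Ruelle direction only after one passes to a related picture: $f$ restricted to a center-unstable plaque need not be expanding, but the combinatorial/entropy machinery of Section~\ref{s.proof} only used that $I$ is a disk \emph{transverse to $E^{cs}$} and the potential $\phi^u=-\log\det(Tf\mid_{E^u})$. I would instead run the entire argument of Section~\ref{s.proof} verbatim with $E^u$ replaced by $E^{cu}$ and $\phi^u$ replaced by $\phi^{cu}(x)=-\log\det(Tf\mid_{E^{cu}(x)})$, using disks $I$ transverse to $E^s$ (so that $f^n(I)$ becomes almost tangent to $E^{cu}$, by domination between $E^s$ and $E^{cu}$). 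Concretely: define $\G^{cu}(f)$-distance-$\ge\vep$ compact set $\cB_\vep^{cu}$, use upper semi-continuity of $h_\mu(f)$ (ordinary metric entropy is upper semi-continuous in the $C^0$-generic sense here since $E^{cu}$ is the top bundle of a dominated splitting — this is exactly where one invokes the $C^1$ entropy semi-continuity results, applied to the foliation generated by $E^{cu}$ when it is integrable, or more robustly the variational/separated-set estimate of Proposition~\ref{p.strongvariation} adapted to transverse disks) to get a pressure gap $h_\mu(f)+\int\phi^{cu}d\mu<-\alpha$ on $\cB_\vep^{cu}$, cover $\cB_\vep^{cu}$ by finitely many convex balls $U_i$, and show $\Leb(B_{n,i})$ decays exponentially by the separated-set/Bowen-ball comparison $\Leb(B_n(x,\delta))\lesssim e^{n\alpha/4}e^{\phi^{cu}_n(x)}$. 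Borel–Cantelli then gives a full-Lebesgue-measure subset of each transverse disk $I$, and Fubini along a smooth foliation transverse to $E^s$ upgrades this to a full-volume set $\Gamma^{cu}$.

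\medskip

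\textbf{Main obstacle.} The delicate point is that $E^{cu}$ need not be uniquely integrable for a $C^1$ diffeomorphism, so ``partial entropy along $\cF^{cu}$'' is not literally defined the way $h_\mu(f,\cF^u)$ is. The right replacement is the \emph{full} metric entropy $h_\mu(f)$, and the comparison one needs is the Ruelle-type inequality $h_\mu(f)\le\int\log\det(Tf\mid_{E^{cu}})d\mu$, which holds because all positive Lyapunov exponents live in $E^{cu}$ (the negative and zero exponents only help the inequality). This is exactly what makes $\G^{cu}(f)$ nonempty and compact and convex, by the same argument as Proposition~\ref{p.Gu} with $h_\mu(f)$ in place of $h_\mu(f,\cF^u)$ and with upper semi-continuity of $h_\mu(f)$ (valid since $E^{cu}$ is the top bundle of a dominated splitting over a $C^1$ system with one-dimensional center, so the center exponent is continuous and the entropy estimate is stable). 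Once one has the pressure gap on $\cB_\vep^{cu}$ and the fact that the separated-set argument of Section~\ref{s.proof} never used integrability of the leafwise foliation — only the geometry of disks transverse to the contracting bundle — the proof goes through. So the main work is to verify that Proposition~\ref{p.strongvariation}, stated there for the geometric potential and transverse disks, applies with $\phi^{cu}$ and gives $\limsup\frac1n\log\sum_{x\in E_n}e^{\phi^{cu}_n(x)}\le h_\mu(f)+\int\phi^{cu}d\mu$; granting that, the Borel–Cantelli plus Fubini conclusion is immediate, and intersecting with $\Gamma^u$ from Theorem~\ref{main.newcriterion} finishes the proof.
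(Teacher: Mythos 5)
Your split-and-intersect structure is the right one, and the $\G^u$-half via Theorem~\ref{main.newcriterion} is exactly what the paper does. But for the $\G^{cu}$-half the paper does not rerun the Section~\ref{s.proof} machinery: it simply cites Catsigeras--Cerminara--Enrich~\cite{CCE}, who already prove that for a $C^1$ diffeomorphism with dominated splitting, Lebesgue-almost every point has all of its Cesaro limits $\mu$ satisfying $h_\mu(f)\ge\int\log\det(Tf\mid_{E^{cu}})\,d\mu$, that is, $\mu\in\G^{cu}(f)$. So the second half of the proof is a one-line citation, and intersecting the two full-volume sets finishes the argument.

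You instead propose to redo the Section~\ref{s.proof} argument with $E^u$ replaced by $E^{cu}$, $\phi^u$ by $\phi^{cu}$, and $h_\mu(f,\cF^u)$ by $h_\mu(f)$. You flag the two obstacles honestly, but they are real gaps, not bookkeeping. First, the partition machinery of Section~\ref{ss.charts} and Proposition~\ref{p.approximate} --- the partitions $\cA^u$, $\cA_{n,m}$, the subordinate partitions --- is built on the unstable \emph{foliation} $\cF^u$; without integrability of $E^{cu}$ there is no $\cA^{cu}$, no analogue of Proposition~\ref{p.approximate}, and the central estimate of Lemma~\ref{l.exponentialtail} does not transfer as stated, because it repeatedly conditions on plaques of an honest foliation. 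Second, the pressure-gap step~\eqref{eq.pressuregap} relies on Theorem~\ref{t.semicontinuous}, which gives upper semi-continuity of \emph{partial entropy along an expanding foliation}; the corresponding statement for the full metric entropy $h_\mu(f)$ of a $C^1$ diffeomorphism is not automatic (for systems away from homoclinic tangencies it is a theorem of~\cite{LVY}, which you neither invoke nor reprove --- a dominated splitting and a one-dimensional center do not by themselves yield entropy semi-continuity). In effect you are attempting to reprove~\cite{CCE} by a method that leans on the very structures~\cite{CCE} was designed to avoid. The clean route is the paper's: cite~\cite{CCE} for $\G^{cu}$, cite Theorem~\ref{main.newcriterion} for $\G^u$, and intersect.
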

\begin{proof}
By \cite{CCE}, for $x$ belonging to a full volume subset, any limit of the sequence $\frac{1}{n}\sum_{i=0}^{n-1}\delta_{g^i(x)}$
belongs to $\G^{cu}$. Moreover, by Theorem~\ref{main.newcriterion}, for $x$ belonging to a full volume subset, any limit of the sequence
$\frac{1}{n}\sum_{i=0}^{n-1}\delta_{g^i(x)}$ belongs to $\G^{u}$. We conclude the proof by taking the intersection
of the two full volume subsets.
\end{proof}

\begin{lemma}\label{l.Gexceed}
If $\mu$ is an ergodic Gibbs $u$-state with non-positive center exponent, then $\mu\in \G(f)$.
\end{lemma}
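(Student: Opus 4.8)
The plan is to show the two defining inequalities for $\mu\in\G(f)=\G^u(f)\cap\G^{cu}(f)$ separately. For the first, since $\mu$ is a Gibbs $u$-state of the $C^{1+}$ diffeomorphism $f$, Ledrappier's theorem~\cite{L84} tells us precisely that $\mu\in\Gibb^u(f)=\G^u(f)$; equivalently, the Pesin formula along $\cF^u$,
$$
h_\mu(f,\cF^u)=\int\log\det(Tf\mid_{E^u(x)})\,d\mu(x),
$$
holds, so the defining inequality of $\G^u(f)$ is satisfied (with equality). Thus the only real content is to check that $\mu\in\G^{cu}(f)$, i.e. $h_\mu(f)\ge\int\log\det(Tf\mid_{E^{cu}(x)})\,d\mu(x)$.

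First I would invoke the Ledrappier--Young machinery for $\mu$ ergodic and $C^{1+}$: the metric entropy decomposes along the Pesin unstable partition, and since $E^c$ is one-dimensional, the center exponent $\lambda^c(\mu)$ is the only exponent between $E^u$ and $E^s$. Because $\mu$ has non-positive center exponent, $\lambda^c(\mu)\le 0$, so the unstable manifolds of $\mu$ are tangent exactly to $E^u$ (the center direction contributes nothing expanding), and hence the full unstable partial entropy equals the total entropy coming from positive exponents. By the Ledrappier--Young formula (or Ruelle's equality in the Gibbs $u$-state case), this gives
$$
h_\mu(f)=h_\mu(f,\cF^u)=\int\log\det(Tf\mid_{E^u(x)})\,d\mu(x).
$$
On the other hand, $\log\det(Tf\mid_{E^{cu}(x)})=\log\det(Tf\mid_{E^u(x)})+\log\|Tf\mid_{E^c(x)}\|$, and integrating the last term gives $\lambda^c(\mu)\le 0$. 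Therefore
$$
\int\log\det(Tf\mid_{E^{cu}(x)})\,d\mu(x)=\int\log\det(Tf\mid_{E^u(x)})\,d\mu(x)+\lambda^c(\mu)\le h_\mu(f),
$$
which is exactly $\mu\in\G^{cu}(f)$. Combining with the first paragraph, $\mu\in\G(f)$.

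The main obstacle is the justification of $h_\mu(f)=h_\mu(f,\cF^u)$ when $\lambda^c(\mu)=0$ (the borderline case): one must rule out that the Pesin unstable partition sees any contribution from the center direction, and confirm that the zero center exponent contributes zero entropy. For strictly negative center exponent this is immediate since the unstable manifold is tangent to $E^u$; for the vanishing case it follows because a zero exponent contributes no entropy in the Ledrappier--Young sum — I would cite the relevant statement from~\cite{LY85a} (and the Gibbs $u$-state interpretation in~\cite{BDVnonuni}) rather than reprove it. If one prefers to avoid Ledrappier--Young entirely, an alternative is: $h_\mu(f)\ge h_\mu(f,\cF^u)=\int\log\det(Tf\mid_{E^u})\,d\mu$ by Ledrappier's theorem applied to Gibbs $u$-states, and $\int\log\det(Tf\mid_{E^{cu}})\,d\mu\le\int\log\det(Tf\mid_{E^u})\,d\mu$ because the extra center term integrates to $\lambda^c(\mu)\le 0$; chaining these two inequalities already yields $h_\mu(f)\ge\int\log\det(Tf\mid_{E^{cu}})\,d\mu$, so the borderline subtlety is sidestepped and only the sign of $\lambda^c(\mu)$ matters.
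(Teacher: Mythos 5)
Your proof is correct and follows essentially the same route as the paper: both arguments reduce the claim to the $\G^{cu}(f)$ inequality, identify $h_\mu(f)$ with the sum of positive Lyapunov exponents (all lying in $E^u$ since $\lambda^c(\mu)\le 0$) via Ledrappier--Young and the Gibbs $u$-state hypothesis, and then note that $\int\log\det(Tf\mid_{E^{cu}})\,d\mu$ exceeds that sum by exactly $\lambda^c(\mu)\le 0$. The "alternative" chain of inequalities you add at the end is a clean way to sidestep the borderline $\lambda^c=0$ discussion, but it rests on the same two facts and does not constitute a different argument.
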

\begin{proof}
We only need to verify that $h_\mu(f)- \int \log(\det(Tf\mid_{E^{cu}(x)}))d\mu(x)\geq 0$.
Since $\mu$ is a Gibbs $u$-state with non-positive center exponent, by Ledrappier and Young\cite{LY85a},
$h_\mu(f)$ coincides with the sum of positive exponent, all of which corresponds to $E^u$. Note that
$\int \log(\det(Tf\mid_{E^{cu}(x)}))d\mu(x)$ equals the sum of all positive exponents  with the non-positive center exponent,
we have
$$h_\mu(f)- \int \log(\det(Tf\mid_{E^{cu}(x)}))d\mu(x)=-\lambda^c_\mu(f)\geq 0.$$
\end{proof}
\begin{remark}\label{rk.Gbad}
This shows that when $\mu$ has negative center exponent,
$$h_\mu(f)- \int \log(\det(Tf\mid_{E^{cu}(x)}))d\mu(x)>0.$$
Let $\nu$ be another Gibbs $u$-state $\nu$ with non-negative center exponent. Then the combination 
$a\mu+(1-a)\nu$ for $a$ sufficiently close to $1$ still belongs to $\G(f)$.
Unlike the space $\G^u(f)$, the extreme elements of $\G(f)$ is not necessarily ergodic.
\end{remark}

Similar to Gibbs $u$-states, we say that a measure $\mu$ is a {\em Gibbs $cu$-state}, if the conditional measures of $\mu$ along the Pesin unstable manifolds (with dimension $\dim(E^{cu})$) are absolutely continuous with respect to the Lebesgue measure there.
\begin{lemma}\label{l.Gcu}
If $\mu\in \G^{cu}(f)$ is an ergodic Gibbs $u$-state with positive center exponent, then $\mu$ is a Gibbs $cu$-state.
In particular, $\mu$ is a physical measure.
\end{lemma}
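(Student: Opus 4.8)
\textbf{Proof proposal for Lemma~\ref{l.Gcu}.}

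The plan is to use the hypotheses to force the Pesin entropy formula for $\mu$ with respect to the splitting $TM=E^s\oplus E^{cu}$, and then to invoke the Ledrappier--Young characterization of SRB measures. First I would note that since the center exponent of $\mu$ is positive and $E^u$ is uniformly expanding, all Lyapunov exponents of $\mu$ along $E^{cu}$ are strictly positive, while those along $E^s$ are strictly negative. Hence $\int \log\det(Tf\mid_{E^{cu}(x)})\,d\mu(x)$ equals the sum of the positive Lyapunov exponents of $\mu$, counted with multiplicity. Ruelle's inequality gives $h_\mu(f)\le\sum_i\lambda_i^+(\mu)$, and the assumption $\mu\in\G^{cu}(f)$ gives the reverse inequality; we therefore obtain the equality
\[
h_\mu(f)=\sum_i\lambda_i^+(\mu)=\int \log\det(Tf\mid_{E^{cu}(x)})\,d\mu(x),
\]
that is, $\mu$ satisfies the Pesin entropy formula.

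Next I would exploit the fact that $f$ is $C^{1+}$ and $\mu$ is hyperbolic with unstable subspace equal to $E^{cu}$: the Pesin unstable manifold of a $\mu$-generic point is then an immersed disk tangent to $E^{cu}$ of dimension $\dim E^{cu}$. By the theorem of Ledrappier and Young~\cite{LY85a} (the direction asserting that Pesin's formula implies absolute continuity of the conditional measures on unstable manifolds), the conditionals of $\mu$ along these Pesin unstable manifolds are absolutely continuous with respect to the induced Lebesgue measure. This is precisely the statement that $\mu$ is a Gibbs $cu$-state.

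For the final assertion, $\mu$ is a hyperbolic measure and an SRB measure, since $E^{cu}$ is its full unstable subspace and $E^s$ contracts uniformly. As $f$ is $C^{1+}$, the Pesin stable lamination is absolutely continuous, so the standard argument (see~\cite{BDVnonuni}) shows that the union of the Pesin stable manifolds through a full-$\mu$-measure set of $\mu$-generic points has positive Lebesgue measure; since $\mu$ is ergodic, this set is contained in $Basin(\mu)$, so $\mu$ is a physical measure. The only substantial input is the Ledrappier--Young theorem, which may be quoted directly; the one point requiring a little care is the identification of the Pesin unstable manifold with the $E^{cu}$-manifold, which is exactly where positivity of the center exponent is used.
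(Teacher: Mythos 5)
Your proof is correct and follows essentially the same route as the paper's: Ruelle's inequality combined with the $\G^{cu}$ hypothesis forces the Pesin entropy formula (the positivity of the center exponent, together with one–dimensionality of the center and uniform expansion on $E^u$, guaranteeing that $\int\log\det(Tf|_{E^{cu}})\,d\mu$ is exactly the sum of positive Lyapunov exponents, so that Ruelle's inequality takes the stated form), and then the Ledrappier/Ledrappier–Young characterization upgrades this equality to absolute continuity of the conditionals along $E^{cu}$-unstable manifolds; physicality then follows from absolute continuity of the stable lamination and $s$-saturation of the basin. The only cosmetic difference is that you cite Ledrappier–Young~\cite{LY85a} where the paper cites Ledrappier~\cite{L84}; both are valid references for the implication "Pesin formula $\Rightarrow$ absolutely continuous unstable conditionals."
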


\begin{proof}
By Ruelle inequality,
$$h_{\mu}(f)\leq \int \log(\det(Tf\mid_{E^{cu}(x)}))d\mu(x).$$

From the definition, $\mu\in \G^{cu}(f)$ means
$$h_{\mu}(f)\geq \int \log(\det(Tf\mid_{E^{cu}(x)}))d\mu(x),$$
Hence we have
$$h_{\mu}(f)= \int \log(\det(Tf\mid_{E^{cu}(x)}))d\mu(x).$$

By Ledrappier \cite{L84}, $\mu$ is a Gibbs $cu$-state.
This implies that the regular points of $\mu$ has full Lebesgue measure along some Pesin unstable manifold. Because the basin of $\mu$ is $s$-saturated, and the stable foliation is absolutely continuous, the
basin of $\mu$ has positive volume. Thus $\mu$ is a physical measure of $f$.
\end{proof}
\begin{remark}\label{rk.ballbasin}
For  ergodic Gibbs $u$-state $\mu\in \G(f)$ with positive center exponent, the  proof above indeed shows that
there is a ball $B$ with $\mu(B)>0$, such that Lebesgue almost every point of $B$ belongs to the basin of $\mu$.
\end{remark}

Let us begin the proof of Theorem~\ref{main.center1d}.
\begin{proof}
Write $\Gibb_e^{u,+}(f)$ $\Gibb_e^{u,0}(f)$ and $\Gibb_e^{u,-}(f)$ the set of ergodic Gibbs $u$-states of $f$ with positive, zero, and negative center
exponent respectively.

Note that every ergodic Gibbs $u$-state with negative center exponent is a physical measure. As a result,  $f$ has at most countably many ergodic Gibbs $u$-states with negative center exponent, denoted by $\mu^-_1,\mu^-_2,\cdots$.

By Lemma~\ref{l.Gcu}, every measure in $\Gibb_e^{u,+}(f)\cap\G^{cu}(f)$ is a physical measure. In particular, there are at most countably many such measures; we denote them by $\mu^+_{p,1},\mu^+_{p,2},\cdots$ and write their collection as $\Gibb^{u,+}_{p}$. From the definition of $\G^{cu}(f)$,
 any  ergodic Gibbs $u$-state $\mu\in\Gibb^{u,+}_e\setminus \G^{cu}(f)$ must satisfy
\begin{equation}\label{eq.negative}
h_\mu(f)-\int \log(\det(Tf\mid_{E^{cu}(x)}))d\mu(x)<0.
\end{equation}

Let $\Gamma$ be the full volume subset given by Theorem~\ref{main.newcriterion} and write
$$\Gamma_1=\Gamma\setminus \cup_{i} Basin(\mu^-_i).$$ If $\Gamma_1$ has zero volume then the proof is finished. Otherwise, we have the following lemma. 

\begin{lemma}\label{l.removenegative}
There is a full volume subset $\tilde{\Gamma}_1\subset \Gamma_1$, such that for any
point $x\in \tilde{\Gamma}_1$ and any  limit $\mu$ of the sequence $\frac{1}{n}\sum_{i=0}^{n-1}\delta_{f^i(x)}$,
the ergodic decomposition of $\mu$ is supported on $\Gibb_e^{u,0}\cup \Gibb_e^{u,+}$.
\end{lemma}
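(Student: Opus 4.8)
The idea is to combine Proposition~\ref{p.physical} with Remark~\ref{rk.ballbasin} to show that any point whose Cesaro limit has a nontrivial ergodic component in $\Gibb_e^{u,-}$ must, after iteration, land on a positive-volume set of points in the basin of some $\mu^-_i$; since the $\mu^-_i$ already exhaust (up to zero volume) the negative-exponent basins and we have removed all of them from $\Gamma_1$, this forces the ergodic decomposition of the limit to avoid $\Gibb_e^{u,-}$. First I would invoke Proposition~\ref{p.physical} to find a full volume subset of $\Gamma_1$ on which every Cesaro limit $\mu$ lies in $\G(f)=\G^u(f)\cap\G^{cu}(f)$; in particular $\mu$ is a Gibbs $u$-state (this uses $\G^u=\Gibb^u$ for $C^{1+}$ via Ledrappier~\cite{L84}), so almost every ergodic component of $\mu$ is an ergodic Gibbs $u$-state, and each such component again lies in $\G^{cu}(f)$ by Proposition~\ref{p.Gu}. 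Thus the ergodic decomposition of $\mu$ is supported on $\Gibb_e^{u,-}\cup\Gibb_e^{u,0}\cup(\Gibb_e^{u,+}\cap\G^{cu}(f))$, and the only thing left to kill is the $\Gibb_e^{u,-}$ part.

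Next I would run a density-point / covering argument identical in spirit to the proof of Lemma~\ref{l.fullbasin}. Suppose, for contradiction, that the set $\Lambda$ of points $x\in\Gamma_1$ whose Cesaro limit has some ergodic component in $\Gibb_e^{u,-}$ has positive volume; take a Lebesgue density point $x_0$ of $\Lambda$. Let $\mu$ be a Cesaro limit at $x_0$ and let $\mu^-_i$ be an ergodic component of $\mu$ in $\Gibb_e^{u,-}$, so $\mu(U)\geq a>0$ for any neighborhood $U$ of $\supp(\mu^-_i)$. By Remark~\ref{rk.ballbasin} there is a ball $B$ with $\mu^-_i(B)>0$ such that Lebesgue-a.e.\ point of $B$ lies in $Basin(\mu^-_i)$; since $\mu^-_i$ is ergodic and its basin is positive-volume, in fact $\mu^-_i(B\cap Basin(\mu^-_i))>0$, so by invariance of the basin and $\mu(B)\geq a\,\mu^-_i(B)>0$ one finds $m$ with $\frac1n\sum\delta_{f^j(x_0)}(B')>0$ for a slightly smaller ball $B'$ whose forward image under $f^m$ lies inside a set of Lebesgue-full basin. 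Then $f^m(B_\varepsilon(x_0))$ meets $Basin(\mu^-_i)$ in a positive-volume set for small $\varepsilon$; since $x_0$ is a density point of $\Lambda$, $f^m(B_\varepsilon(x_0)\cap\Lambda)$ has positive volume, and basins are invariant under $f$ and $f^{-1}$, so $\Leb(\Lambda\cap Basin(\mu^-_i))>0$. But $\Lambda\subset\Gamma_1=\Gamma\setminus\bigcup_i Basin(\mu^-_i)$, a contradiction. Hence $\Lambda$ has zero volume, and $\tilde\Gamma_1$ is $\Gamma_1$ with $\Lambda$ and the auxiliary zero-volume set from Proposition~\ref{p.physical} removed.

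The main obstacle is making rigorous the transition from "$\mu$ gives positive mass to a neighborhood of $\supp(\mu^-_i)$" to "a definite forward iterate of a small ball around $x_0$ lands in the full-volume basin set inside $B$." Remark~\ref{rk.ballbasin} only gives a ball $B$ on which the basin has full relative volume, not that $\supp(\mu^-_i)$ itself is contained in that basin; so one must use that $f^j(x_0)$ visits $B$ with positive frequency (which follows from $\mu(B)>0$ once we know $\mu^-_i(B)>0$ — and this requires choosing $B$ as in the remark rather than an arbitrary neighborhood) together with continuity of $f^m$ to push a whole small ball, not just the orbit point, into $B$. This is exactly the delicate point that the analogous step in Lemma~\ref{l.fullbasin} handles, and I would follow that template closely, taking care that the ball $B$ used is the one supplied by Remark~\ref{rk.ballbasin} and that $\varepsilon$ is chosen after $m$.
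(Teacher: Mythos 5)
Your proposal correctly identifies the starting point (Proposition~\ref{p.physical}, then split the ergodic decomposition into $\Gibb_e^{u,-}$, $\Gibb_e^{u,0}$, $\Gibb_e^{u,+}$ pieces and try to rule out the first), but the core step is flawed: you invoke Remark~\ref{rk.ballbasin} to produce a ball $B$ with $\mu^-_i(B)>0$ on which Lebesgue-a.e.\ point lies in $\mathrm{Basin}(\mu^-_i)$. That remark is stated, and in fact only holds, for an ergodic Gibbs $u$-state in $\G(f)$ with \emph{positive} center exponent, because there the measure is a Gibbs $cu$-state and the complementary ``stable'' direction is the strong stable bundle $E^s$, whose foliation is uniformly absolutely continuous --- so the basin really does fill a ball up to measure zero. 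For $\mu^-_i$ the center exponent is negative, the relevant stable direction is the nonuniform $E^{cs}$, and its Pesin stable lamination has uniformly sized leaves and bounded holonomy Jacobians only on Pesin blocks, which are closed and in general nowhere dense. Consequently $\mathrm{Basin}(\mu^-_i)$ only occupies a definite but possibly small fraction of any ball; there is no ball on which it has full relative Lebesgue measure, and the density-point argument of Lemma~\ref{l.fullbasin}/Lemma~\ref{l.G0} does not transfer.

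This is exactly the difficulty the paper flags when it says the proof must ``build a covering property using irregular open sets.'' The paper's argument goes to considerable lengths to get around the lack of a full-basin ball: it constructs a Pesin cylinder $\cC$ over a typical unstable plaque of $\mu^-_1$, obtains a \emph{uniform lower bound} $L>0$ on $\Leb^u\bigl(\mathrm{Basin}(\mu^-_1)\cap D_b\bigr)/\Leb^u(D_b)$ across all unstable slices $D_b$ of the cylinder (Lemma~\ref{l.uniformbasin}), and then proves a delicate Vitali-type covering lemma (Lemma~\ref{l.covering}) showing that preimages $f^{-n_i}(D_{b_i})$ cover the positive-measure compact set $I^*\subset I\cap\Lambda_1$ up to Lebesgue-null, using an annular ``bad neighborhood'' bookkeeping and bounded distortion to control overlaps. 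Combining the uniform lower bound on each covering piece with the disjointness and letting $r\to0$ yields a definite amount of $\mathrm{Basin}(\mu^-_1)$ inside arbitrarily small neighborhoods of $I^*$, contradicting $I^*\cap\mathrm{Basin}(\mu^-_1)=\emptyset$. To repair your proposal you would need to replace the full-basin-ball step by this kind of positive-fraction covering argument; a Lebesgue density point of $\Lambda$ and a single iterate $f^m$ are not enough when the basin is only a positive-measure Cantor-like set inside $B$.
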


\begin{remark}\label{rk.0cu}
Lemma~\ref{l.removenegative} indeed implies that for any $x\in \tilde{\Gamma}_1$, any
limit $\mu$ of the sequence $\frac{1}{n}\sum_{i=0}^{n-1}\delta_{f^i(x)}$,
the ergodic decomposition of $\mu$ is supported on $\Gibb_e^{u,0}\cup \Gibb_p^{u,+}$.
In fact, by Proposition~\ref{p.physical},
$$h_\mu(f)-\int \log(\det(Tf\mid_{E^{cu}(x)}))d\mu(x)\geq 0.$$
Similar to the proof of Proposition~\ref{p.Gu} on the ergodic component, the entropy function is an affine function, and for $\mu\in \Gibb^{u,0}_e(f)\cup \Gibb^{u,+}_p$,
$$h_\mu(f)-\int \log(\det(Tf\mid_{E^{cu}(x)}))d\mu(x)= 0.$$
Then this observation follows from \eqref{eq.negative}.
\end{remark}

We leave the proof of Lemma~\ref{l.removenegative} to the Appendix, and continue with the proof of Theorem~\ref{main.center1d}.

Let $\Gamma_2=\tilde{\Gamma}_1\setminus \cup_i Basin(\mu^{+}_{p,i})$. Again, if $\Gamma_2$ has zero volume then the proof is finished. Otherwise, we need the following lemma:

\begin{lemma}\label{l.G0}
There is a full volume subset $\tilde{\Gamma}_2\subset \Gamma_2$, such that for any
point $x\in \tilde{\Gamma}_2$ and any limit $\mu$ of the sequence $\frac{1}{n}\sum_{i=0}^{n-1}\delta_{f^i(x)}$,
the ergodic decomposition of $\mu$ is supported on $\Gibb_e^{u,0}$.
\end{lemma}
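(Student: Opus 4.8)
The plan is to run the same density-of-basins argument used for Lemma~\ref{l.fullbasin}, now with Remark~\ref{rk.ballbasin} supplying the ``local basins'', and to obtain $\tilde\Gamma_2$ simply by deleting an explicit null set from $\Gamma_2$.

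First I would record what is already available along $\Gamma_2$. Since $\Gamma_2\subset\tilde\Gamma_1$, Remark~\ref{rk.0cu} tells us that for every $x\in\Gamma_2$ and every Cesaro limit $\mu$ of $\frac1n\sum_{i=0}^{n-1}\delta_{f^i(x)}$, the ergodic decomposition of $\mu$ is supported on $\Gibb_e^{u,0}\cup\Gibb^{u,+}_p$, where $\Gibb^{u,+}_p=\{\mu^+_{p,1},\mu^+_{p,2},\dots\}$ is the at most countable family of ergodic Gibbs $u$-states with positive center exponent that lie in $\G^{cu}(f)$. Writing this decomposition as $\mu=\nu+\sum_i a_i\mu^+_{p,i}$ with $a_i\ge 0$ and $\nu$ having all of its ergodic components in $\Gibb_e^{u,0}$, the lemma reduces to showing that, after discarding a null set from $\Gamma_2$, one always has $a_i=0$ for every $i$.

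Next I would bring in the geometry. Each $\mu^+_{p,i}$ is an ergodic Gibbs $u$-state lying in $\G(f)=\G^u(f)\cap\G^{cu}(f)$ (in $\G^u(f)=\Gibb^u(f)$ because it is a Gibbs $u$-state, and in $\G^{cu}(f)$ by the definition of $\Gibb^{u,+}_p$) with positive center exponent, so Remark~\ref{rk.ballbasin} provides an open ball $B_i$ with $\mu^+_{p,i}(B_i)>0$ such that Lebesgue-almost every point of $B_i$ belongs to $Basin(\mu^+_{p,i})$. As each $f^m$ is a diffeomorphism (hence maps null sets to null sets) and $Basin(\mu^+_{p,i})$ is invariant under $f$ and $f^{-1}$, Lebesgue-almost every point of $f^{-m}(B_i)$ also lies in $Basin(\mu^+_{p,i})$, for all $m\ge 0$. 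Put
$$
W=\bigcup_{i}\ \bigcup_{m\ge 0} f^{-m}(B_i).
$$
Then $\Leb\bigl(W\setminus\bigcup_i Basin(\mu^+_{p,i})\bigr)=0$, and since $\Gamma_2$ is by construction disjoint from every $Basin(\mu^+_{p,i})$, we get $\Leb(\Gamma_2\cap W)=0$; hence $\tilde\Gamma_2:=\Gamma_2\setminus W$ is a full volume subset of $\Gamma_2$.

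Finally I would check that $\tilde\Gamma_2$ works. Fix $x\in\tilde\Gamma_2$ and a Cesaro limit $\mu=\lim_k\frac1{n_k}\sum_{j=0}^{n_k-1}\delta_{f^j(x)}$, and suppose toward a contradiction that $a_{i_0}>0$ for some $i_0$. Then $\mu(B_{i_0})\ge a_{i_0}\,\mu^+_{p,i_0}(B_{i_0})>0$, so applying the portmanteau theorem to the open set $B_{i_0}$,
$$
\liminf_{k\to\infty}\frac1{n_k}\#\{0\le j<n_k: f^j(x)\in B_{i_0}\}\ \ge\ \mu(B_{i_0})\ >\ 0,
$$
and in particular $f^m(x)\in B_{i_0}$ for some $m\ge 0$, i.e.\ $x\in f^{-m}(B_{i_0})\subset W$, contradicting $x\in\tilde\Gamma_2=\Gamma_2\setminus W$. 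Hence every $a_i$ vanishes, so the ergodic decomposition of $\mu$ is supported on $\Gibb_e^{u,0}$, as desired. The step I expect to need the most care is checking that the ball in Remark~\ref{rk.ballbasin} can be chosen open (so that the portmanteau inequality points the right way and an orbit with $\mu(B_{i_0})>0$ genuinely enters $B_{i_0}$) and that ``full volume subset of $\Gamma_2$'' is the notion consumed by the rest of the proof of Theorem~\ref{main.center1d}; both are routine, the remainder being a repetition of the invariance-of-basins bookkeeping already carried out for Lemma~\ref{l.fullbasin}.
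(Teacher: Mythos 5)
Your proof is correct and uses the same key ingredients as the paper's argument: Remark~\ref{rk.0cu} to reduce to the decomposition $\mu=\nu+\sum_i a_i\mu^+_{p,i}$, Remark~\ref{rk.ballbasin} to supply the balls $B_i$, and the portmanteau/open-set observation that if $\mu(B_{i_0})>0$ then the orbit of $x$ must enter $B_{i_0}$. The only difference is organizational: you delete the exceptional null set $W=\bigcup_i\bigcup_{m\ge 0}f^{-m}(B_i)$ up front, whereas the paper runs the same idea as a contradiction via a Lebesgue density point of a putative positive-volume bad set; both are essentially the same approach.
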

\begin{proof}
We prove by contradiction. Suppose there is a positive volume subset $\Gamma_3\subset \Gamma_2$ such that for any $x\in \Gamma_3$, there is some limit $\mu$ of the sequence $\frac{1}{n}\sum_{i=0}^{n-1}\delta_{f^i(x)}$ that is not supported on $\Gibb^{u,0}_e$. By Lemma~\ref{l.removenegative} and Remark~\ref{rk.0cu},  $\mu$ can be written as a combination in the following way:
$$\mu=a \mu^{+}_{p,i_x}+(1-a)\nu\, \text{ for some } \mu^{+}_{p,i_x}\in \Gibb_p^{u,+}.$$
where $a>0$ and $\nu$ is a Gibbs $u$-state.

The rest of the proof is similar to Lemma~\ref{l.fullbasin}. Fix $x^\prime$ a Lebesgue density point of $\Gamma_3$ and limit $\mu$ of the sequence $\frac{1}{n}\sum_{i=0}^{n-1}\delta_{f^i(x^\prime)}$ which
is not supported on $\Gibb^{u,0}_e$. And suppose $$\mu=a \mu^{+}_{p,i_{x\prime}}+(1-a)\nu$$
where $a>0$ and $\nu$ is a Gibbs $u$-state. Without loss of generality, we may assume that $i_{x\prime}=1$.

Let $B_1$ be the ball given by Remark~\ref{rk.ballbasin}. Then $\mu(B_{1})\geq a \mu^{+}_{p,1}(B_1)>0$. Thus there is $m>0$ such that $f^m(x^\prime)\subset B_{1}$. Then $f^m(\Gamma_3)\cap Basin(\mu^{+}_{p,1})$ has positive volume.
By the invariance of basin under the iteration of $f^{-1}$, $\Gamma_3$ has non-trivial intersection with the basin
of $\mu^{+}_{p,1}$, which contradicts with the definition of $\Gamma_3\subset \Gamma_2$.
\end{proof}
To Summarize, we have shown that for Lebesgue  almost every $x$: 
\begin{itemize}
\item either $x$ is in the basin of a Gibbs $u$-state with negative center exponent;
\item or $x$ is in the basin of $\mu\in\Gibb_e^{u,+}(f)\cap\G^{cu}(f)$, and $\mu$ is a Gibbs $cu$-state by Lemma~\ref{l.Gcu};
\item or the ergodic decomposition of every Cesaro limit $\mu$ is supported on $\Gibb_e^{u,0}$.
\end{itemize}
The proof of Theorem~\ref{main.center1d} is complete.
\end{proof}

\begin{proof}[Proof of Corollary~\ref{maincor.uniquevanishing}]
Assume that $f$ has no ergodic Gibbs $u$-state with vanishing center exponent, i.e., $\Gibb_e^{u,0}=\emptyset$. In other words, case (iii) of Theorem~\ref{main.center1d} does not happen. In particular, $f$ must have  physical measures, whose basin covers a set with full volume.

Now assume that $f$ has exactly one ergodic Gibbs $u$-state with vanishing center exponent, denote by $\mu$. Prove by contradiction, assume that $f$ has no physical measure. Then we are in the case (iii) of  Theorem~\ref{main.center1d}: for Lebesgue almost every point, every Cesaro limit must be in $\Gibb_e^{u,0} = \{\mu\}$. This means that $\mu$ is a physical measure, a contradiction.  So $f$ must have at least one physical measure.

In this case, the basin of all physical measures of $f$ having full volume follows from the fact that when $\Gibb_e^{u,0} = \{\mu\}$, the set of points points whose Cesaro limit is in $\Gibb_e^{u,0}$ is indeed the basin of $\mu$. 
\end{proof}

\begin{proof}[Proof of Corollary~\ref{maincor.finitephysicalmeasure}.]
Using the notations above, we have $\Gibb_e^{u,0}=\emptyset$. We have to show that both  $\Gibb^{u,-}_{e}$ and $\Gibb^{u,+}_{p}$ are finite sets.\\
\noindent (1). $\Gibb^{u,-}_{e}$ is finite.

Assume that this is not the case. Then take a subsequence $\{n_k\}$ if necessary,  we have $\mu^-_n\xrightarrow{weak*} \mu$ where $\mu$ must be a Gibbs $u$-state thanks to Proposition~\ref{p.Gibbsustates}.   Moreover, the center exponent of $\mu$ is non-positive. Since $\Gibb_e^{u,0}=\emptyset$, $\mu$ must have a ergodic component with negative center exponent, which is a Gibbs $u$-state of $f$. Without loss of generality, we may assume that $\mu^-_1$ is an ergodic component of $\mu$.

The rest of the proof is similar to \cite[Lemma 4.2]{VY1}. We take an unstable disk $I\subset \supp \mu^-_1$ such that $\Leb_I$ almost every $x\in I$ has Pesin stable manifold with dimension $\dim E^{cs}$. Fix $r>0$ small and take  $\Gamma_r\subset I$ with positive $\Leb_I$ measure, such that points in $\Gamma_r$ has Pesin stable manifold with size at least $r$. We have $\bigcup_{x\in\Gamma_r} W^{cs}_r(x)\subset Basin( \mu^-_1)$.

Take typical points $x_{n_k}$ of $\mu^-_{n_k}$ such that $\Leb^u$ almost every points of  $\cF^u_{\loc}(x_{n_k})$ are also typical points of $\mu^-_{n_k}$, and $\cF^u_{\loc}(x_{n_k})$ is close to $I$ for $k$ large enough. Since Pesin stable manifolds are absolutely continuous,  $\bigcup_{x\in\Gamma_r}W^{cs}_r(x)$  intersects with $\cF^u(x_{n_k})$ on a positive $\Leb^u$ subset for $k$ large enough. This shows that $\supp\mu^-_{n_k}$ and $Basin (\mu^-_1)$ has non-trivial intersection, which is a contradiction.

\noindent (2). $\Gibb^{u,+}_{p}$ is finite. 

The proof is similar. We take $\mu^+_{p,n_k}\xrightarrow{k\to\infty}\mu$. $\mu$ must be a Gibbs $u$-state with non-negative center exponent. By~\cite[Lemma 7.5]{Y16}, $\G(f)$ is compact.  Since $\mu^+_{p,n_k}\in \G(f)$, so is $\mu$. The main difference here is, as we have previously observed, that the ergodic components of measures in $\G(f)$ is not necessarily in $\G(f)$. We need to consider the following two cases:\\
\noindent Case (i). $\mu$ has a ergodic component with negative center exponent. Then the same argument used in (1) gives a contradiction.\\
\noindent Case (ii). Every ergodic component of $\mu$ has positive center exponent.

By Ruelle's inequality, for every ergodic component $\mu_x$ of $\mu$, we have 
$$
h_{\mu_x}(f)- \int \log(\det(Tf\mid_{E^{cu}(y)}))d\mu_x(y)\leq 0.
$$
On the other hand, since $\mu\in\G(f)$, we must have
$$
h_{\mu}(f)- \int \log(\det(Tf\mid_{E^{cu}(y)}))d\mu(y)\geq 0.
$$
This implies that 
$$
h_{\mu_x}(f)- \int \log(\det(Tf\mid_{E^{cu}(y)}))d\mu_x(y)= 0
$$
for almost every ergodic component $\mu_x$. In other words, we must have $\mu_x\in \G(f)$. In particular,  $\mu_x$ is a physical measure.

By Remark~\ref{rk.ballbasin}, the basin of $\mu_x$ contains a ball $B$ with size $r>0$. For $k$ large enough, we have a non-trivial intersection of $B$ with the Pesin unstable manifold $\cF^{cu}(x_{n_k})$ at some typical points $x_{n_k}$ of $\mu^+_{p,n_k}$, which is a contradiction.
\end{proof}

\appendix
\section{Proof of Lemma~\ref{l.removenegative}}
Now we prove Lemma~\ref{l.removenegative}, which is similar to the proof of \cite{VY1}[Proposition 6.9] but
much more difficult, since we have to build a covering property using irregular open sets.
\begin{proof}[Proof of Lemma~\ref{l.removenegative}]
Recall that the set $\Gamma_{1}$ is defined as:
$$\Gamma_1=\Gamma\setminus \cup_{i} Basin(\mu^-_i),$$
where $\Gamma$ is the full volume set given by Theorem~\ref{main.newcriterion}. 
 
Suppose by contradiction that there is a positive volume subset $\Lambda \subset \Gamma_1$, such that for every
point $x\in \Lambda$, there is a limit $\mu$ of the sequence $\frac{1}{n}\sum_{i=0}^{n-1}\delta_{f^i(x)}$,
such that the ergodic decomposition of $\mu$ is not supported on $\Gibb_e^{u,0}\cup \Gibb_e^{u,+}$.

Then $\Lambda=\cup \Lambda_i$ where $\Lambda_i$ is the set  of points $x$ such that  there is a limit $\mu_x$ (which must be a Gibbs $u$-state since $\mu_x\in\G(f)$) of the sequence $\frac{1}{n}\sum_{i=0}^{n-1}\delta_{f^i(x)}$, whose ergodic decomposition assigns positive weight on $ \mu^{-}_i$:
$$\mu_x=a_x \mu^{-}_i+(1-a_x)\nu$$
where $\nu$ is a Gibbs $u$-state. We may suppose $\Leb(\Lambda_1)>0$.

By the absolute continuity of unstable foliation, we take an unstable disk $I$ such that
$I\cap \Lambda_1$ has positive $\Leb\mid_I$ measure. We further take $I^*\subset I\cap \Lambda_1$
to be a compact subset and still with positive $\Leb\mid_I$ measure.
Consider $B^I_r(I^*)=\{x\in I: d(x,I^*)< r\}$ the $r$-neighborhood of $I^*$ in $I$. Because $I^*$ is compact, we have
\begin{equation}\label{eq.Dbound}
\cap_{r>0} B^I_r(I^*)=I^*.
\end{equation}
Our goal is to show that $I^*$ has non-trivial intersection with the basin of $\mu_1^-$, which contradicts with the definition of $\Gamma_1$.

For this purpose, take $x$ a typical $\mu^{-}_1$ point such that $x\in \supp(\mu)$, and there is a full volume subset $\Gamma_x$
of $\cF^u_{loc}(x)$ consists of regular $\mu^{-}_1$ points. In particular, every point of $\Gamma_x$ has local Pesin stable
manifold. Take $r>0$ small enough, there is a positive volume subset $\Gamma_r\subset\Gamma_x$ such that each point of
$\Gamma_r$ has Pesin stable manifold $W^{cs}_r$ with radius larger than $r$. By Pesin theory (\cite{P}), the local stable lamination $W^{cs}_{r}(\Gamma_r)$ is absolutely continuous. Passing to a positive volume subset $\Gamma_r$ if necessary, we may assume the stable holonomy map is uniformly absolute continuous: there is $K>0$ such that for any unstable disks $D_1,D_2$ contained in a small neighborhood $U$ of $x$, denote by $\Gamma_r({D_i})$, $i=1,2$ the image of $\Gamma_r$ on $D_i$ under the stable holonomy map, then the  map $\cH^s_{D_1,D_2}: \Gamma_r({D_1}) \to \Gamma_r(D_2)$ induced by the local Pesin stable lamination $W^{cs}_{r}(\Gamma_r)$ has Jacobian bounded in $[1/K,K]$ (we can do this because the Jacobian is uniform bounded for the local unstable lamination of a Pesin block).

Thus by the Pesin theory, we may build a cylinder $\cC$ inside the neighborhood $U$ as:
$$\cC = h(I^{\dim(u)} \times J^{\dim(cs)})$$
where $I^{\dim(u)}$ and $J^{\dim(cs)}$ are the unit disks of $R^{\dim(u)}$ and $R^{\dim(cs)}$ respectively, and $h: I^{\dim(u)} \times J^{\dim(cs)} \to M$ is an embedding  with
$\Gamma_r\subset h(I^{\dim(u)}\times 0)\subset \cF^u_{loc}(x)$
satisfying the following properties: 
\begin{itemize}
\item[(1)] $h({a}\times J^{\dim(cs)})$ is contained in $W^{s}(h(a,0))$ if $h(a,0)\in \Gamma_r$;
\item[(2)] $h(I^{\dim(u)} \times b)$ is a disk $D_b$ contained in $\cF^u_{loc}(h(0,b))$ with radius $R$;
\item[(3)] the volume $\Leb_{D_b}(D_b)$ is uniformly bounded by $K>0$;
\item[(4)] For any two disks $D_{b_1}$ and $D_{b_2}$, there is a holonomy map $h^{cs}$ defined on $\Gamma_r(D_{b_1})$induced by $\{W^{cs}_r(x)\}_{x\in \Gamma_r}$, such that the Jacobian of
the holonomy map between $\Leb\mid\Gamma_r(D_{b_1})$ and $\Leb\mid\Gamma_r(D_{b_2})$ is bounded by $K$ from above and $1/K$ from below.
\end{itemize}
Note that every point in $\Gamma_r(D_{b_1})$ is contained in the basin of $\mu_1^-$. From the property (4) above on the uniform Jacobian of the holonomy map, one has the following lemma:
\begin{lemma}\label{l.uniformbasin}
There is $L>0$ such that for any $b\in J^{\dim(cs)}$, we have 
$$\frac{\Leb^u Basin(\mu^{-}_1)\cap D_b}{\Leb^u D_b}>L.$$
\end{lemma}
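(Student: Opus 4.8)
The plan is to read the lemma off directly from the four structural properties of the cylinder $\cC = h(I^{\dim(u)}\times J^{\dim(cs)})$ established just above, the crucial point being that the \emph{uniform} Jacobian bound in property (4) is precisely what makes the constant $L$ independent of $b$. No new dynamical input is needed beyond what the cylinder construction already supplies.

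Concretely, I would first set $c_0:=\Leb^u(\Gamma_r)>0$, which is positive because $\Gamma_r$ was chosen to be a positive-volume subset of $\cF^u_{\loc}(x)$. Since $\Gamma_r\subset h(I^{\dim(u)}\times\{0\})=D_0$, the stable holonomy onto $D_0$ is the identity, so $\Gamma_r(D_0)=\Gamma_r$. Now fix an arbitrary $b\in J^{\dim(cs)}$ and apply property (4) to the pair $D_0,D_b$: the holonomy map $h^{cs}\colon\Gamma_r(D_0)\to\Gamma_r(D_b)$ induced by $\{W^{cs}_r(y)\}_{y\in\Gamma_r}$ has Jacobian bounded below by $1/K$ with respect to the leafwise volumes, whence by the change of variables formula
$$
\Leb^u\bigl(\Gamma_r(D_b)\bigr)\ \geq\ \frac{1}{K}\,\Leb^u\bigl(\Gamma_r(D_0)\bigr)\ =\ \frac{c_0}{K}.
$$
Next I would observe that $\Gamma_r(D_b)\subset Basin(\mu^-_1)\cap D_b$: by property (1) every $z\in\Gamma_r(D_b)$ lies on $W^{cs}_r(y)$ for some $y\in\Gamma_r$, and $y$ is a regular $\mu^-_1$-point, hence $y\in Basin(\mu^-_1)$; since the forward orbits of points on the same local stable manifold stay asymptotic, $z\in Basin(\mu^-_1)$ as well. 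Combining this with property (3), which gives $\Leb^u(D_b)\leq K$, yields
$$
\frac{\Leb^u\bigl(Basin(\mu^-_1)\cap D_b\bigr)}{\Leb^u(D_b)}\ \geq\ \frac{\Leb^u\bigl(\Gamma_r(D_b)\bigr)}{\Leb^u(D_b)}\ \geq\ \frac{c_0/K}{K}\ =\ \frac{c_0}{K^2},
$$
and the lemma follows with $L:=c_0/K^2$, which does not depend on $b$.

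The only difficulty worth flagging is already behind us: it is the construction, via Pesin theory and the absolute continuity of the stable lamination, of the cylinder $\cC$ together with the \emph{uniform} bound $[1/K,K]$ on the Jacobians of the stable holonomies between the disks $D_b$ (property (4)). Granting that uniformity, the estimate above is immediate; without it, the proportion of $D_b$ lying in $Basin(\mu^-_1)$ could a priori shrink to zero as $b$ moves away from $0$, and the lemma would fail.
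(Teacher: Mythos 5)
Your proof is correct and follows essentially the same route the paper intends: the paper states the lemma as an immediate consequence of property~(4) (the uniform Jacobian bound) together with the observation, made right before the lemma, that every point of $\Gamma_r(D_b)$ lies in $Basin(\mu^-_1)$; your write-up simply fills in the change-of-variables estimate and the upper bound on $\Leb^u(D_b)$ from property~(3), arriving at $L=c_0/K^2$.
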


 Recall that $I$ is an unstable disk such that $I\cap \Lambda_1$ has positive $\Leb_B$ measure, and $I^*\subset I\cap\Lambda_1$ is compact. The next lemma shows that sets of the form $f^{-n}(D_{b})$ covers $I\cap\Lambda_1$ up to a measure zero set. 

\begin{lemma}\label{l.covering}
For any $n_0>0$, there is a family of disjoint open sets $$\{f^{-n_i}(D_{b_i})\}_{i=1}^\infty\subset I,$$ with $n_i>n_0$, such that $\Leb_I(I^*\setminus \bigcup_i f^{-n_i}(D_{b_i}))=0$.
\end{lemma}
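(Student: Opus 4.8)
The plan is to use a Vitali-type covering argument, where the covering family consists of the preimages $f^{-n}(D_b)$ of the unstable disks $D_b$ from the cylinder $\cC$. The key observation that makes this possible is that every point $x\in I^*\subset\Lambda_1$ has a Cesaro limit $\mu_x$ whose ergodic decomposition charges $\mu_1^-$, and $\mu_1^-$ is supported on (the closure of) the unstable leaf $\cF^u_{loc}(x_0)$ through the base point $x_0$ used to build $\cC$. In particular $\mu_x(U)>a_x\,\mu_1^-(U)>0$ for the neighborhood $U$ containing $\cC$, so infinitely often the orbit of $x$ visits $U$; more precisely, for any $\epsilon>0$ there are arbitrarily large $n$ with $\frac1n\#\{0\le j<n: f^j(x)\in U\}>\delta$ for some fixed $\delta>0$ depending only on $a_x$.

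First I would fix $n_0$ and, for each $x\in I^*$, choose a time $n=n(x)>n_0$ such that $f^n$ maps a small piece of $I$ around $x$ inside $U$ with bounded distortion, diffeomorphically onto a set containing an unstable disk of the required size; by the domination between $E^{cs}$ and $E^u$ and the fact that $I$ is (or can be assumed, after one initial iterate) almost tangent to $E^u$, the image $f^n(I)$ near $f^n(x)$ is a nearly-unstable disk, and one can arrange it to contain a disk $D_{b}$ from the cylinder $\cC$ for a suitable $b=b(x)$. Pulling back, $f^{-n}(D_{b(x)})$ is an open subset of $I$ containing a neighborhood of $x$; call this a \emph{admissible tile at $x$}. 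This gives a Vitali cover of $I^*$ by admissible tiles, each with $n_i>n_0$.

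Next I would apply the Vitali covering lemma (in the version for the measure $\Leb_I$ on $I$, using that bounded-distortion images keep the relevant densities comparable) to extract a countable disjoint subfamily $\{f^{-n_i}(D_{b_i})\}$ that still covers $\Leb_I$-almost all of $I^*$. Here one must be slightly careful: the tiles are not balls, so the classical Vitali lemma does not apply verbatim; instead I would use that each tile is the bounded-distortion preimage of a disk of definite size inside $\cC$, hence contains and is contained in $I$-balls of comparable radii, so a Besicovitch/Vitali argument for the doubling measure $\Leb_I$ goes through. The disjointness is imposed by the selection procedure, and the ``covers up to measure zero'' conclusion is the standard output of the covering lemma.

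The main obstacle I anticipate is the \emph{uniform recurrence and bounded distortion} step: one needs that the return time $n(x)$ can be chosen so that $f^{n(x)}$ restricted to a neighborhood of $x$ in $I$ has distortion bounded by a constant \emph{independent of $x$ and of $n_0$}, and that the image genuinely contains one of the finitely-many-shape disks $D_b$ of the fixed cylinder $\cC$ rather than merely passing near it. This requires combining: (a) the almost-tangency of $f^k(I)$ to $\cF^u$ for $k$ large (from domination), (b) uniform expansion along $E^u$ to guarantee the image disk has size at least $R$, and (c) the positive-frequency-of-visits-to-$U$ estimate coming from $\mu_x(U)>0$, all while keeping the distortion of $\det(Tf^{n}\mid E^u)$ along the orbit under control via \eqref{eq.smalldiameter} and a bounded-distortion argument for Gibbs $u$-states. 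Once these are in place, the covering lemma is routine.
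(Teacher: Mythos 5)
Your recurrence step and the overall plan (cover $I^*$ by pullbacks $f^{-n}(D_b)$, then extract a disjoint subfamily covering almost all of $I^*$) match the paper's strategy, but the Vitali/Besicovitch step hides a real gap. You claim each tile $f^{-n}(D_b)$ ``contains and is contained in $I$-balls of comparable radii'' because of bounded distortion, and that the classical covering theorem therefore applies. That is not what bounded distortion gives you: for a $C^{1+}$ diffeomorphism it controls the Jacobian determinant $\det(Tf^n|_{E^u})$ along unstable leaves, not the ratio of the largest to the smallest singular value of $Tf^n|_{E^u}$. When $\dim E^u\geq 2$ and the expansion rates along different sub-directions of $E^u$ differ, $f^{-n}(D_b)$ is roughly an ellipsoid whose axis ratio diverges as $n\to\infty$; there is no uniform eccentricity bound, and neither Vitali's theorem nor Besicovitch's (both of which require balls or uniformly bounded eccentricity) applies off the shelf. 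It is a classical fact that covering/differentiation fails for, say, rectangles of unbounded eccentricity in $\mathbb{R}^2$, so this obstruction is genuine and cannot be waved away.

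The paper's proof is a hand-built substitute for the covering lemma designed to avoid exactly this. It runs a greedy inductive selection of disjoint disks $f^{-i}(D_{b^i_j})$ and, instead of relying on eccentricity, controls the leakage at each later time $m$ by the explicit ``bad annuli'' $f^{-i}(D_{b^i_j,m-i})$ of thickness $\lambda^{m-i}$ (with $\lambda=\max\|Tf^{-1}|_{E^u}\|<1$): Lemma~\ref{l.away} guarantees that a point of $X_m$ lying neither in a selected disk nor in any such annulus can host a new disjoint disk. The total pulled-back measure of all the annuli is summable, by~\eqref{eq.badneighborhood}, using only bounded Jacobian distortion, and the contradiction with $\sum_m\Leb^u(X_m\cap I^*_1)=\infty$ follows. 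This is what the authors mean by having ``to build a covering property using irregular open sets.'' You would need something of this sort --- or a careful passage to Lyapunov coordinates on a fixed Pesin block with explicit comparability constants --- rather than the off-the-shelf covering theorem. One minor misreading: in this appendix $I$ is chosen to be a genuine unstable disk (via absolute continuity of $\cF^u$), not a disk merely transverse to $E^{cs}$ as in Sections~4--5, so the ``iterate until almost tangent to $E^u$'' consideration in your proposal does not arise here.
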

This covering lemma is sufficient for us to finish the proof of Lemma~\ref{l.removenegative}. We may take $r>0$ sufficiently small, such that
$\Leb^u(B_r^I(I^*)\setminus I^*)$ is arbitrarily close to zero.
By uniform expansion on $E^u$, there is $n_r>0$ such that for any $n\geq n_r$, $f^{-n}(D_{b})$ has diameter less than $r$. In particular, the
covering of $I^*$ by  $\{f^{-n_i}(D_{b_i})\}_{i=1}^\infty$ is indeed contained in $B^I_r(I^*)$.

By uniform distortion, the invariance of the basin of $\mu^{-}_1$ and Lemma~\ref{l.uniformbasin} , there is $L_1>0$ does not depend on $r$, such that
$\frac{\Leb^u(Basin(\mu^{-}_1)\cap B^I_r(I^*))}{\Leb^u(\bigcup f^{-n_i}(D_{b_i}))}>L_1$.
In particular, we have
$$\Leb^u(Basin(\mu^{-}_1)\cap B^I_r(I^*))>L_2 \text{ for some } L_2>0 \text{ independent of } r.$$
By the  definition of $I^*\subset \Lambda_1\subset\Lambda\subset\Gamma_1 $ where $\Gamma_1$ is obtained by removing all the basins of $\mu^-_i$, we have
$I^*\cap Basin(\mu^{-}_1)=\emptyset$, thus all the mass in $Basin(\mu^{-}_1)\cap B^I_r(I^*)$
must indeed be contained in $B^I_r(I^*)\setminus I^*$. However, this contradicts with the fact that
$\lim_{r\to 0}\Leb^u(B^I_r(I^*)\setminus I^*)=0$.

So it remains to prove the Lemma~\ref{l.covering}. To simply notation, we may assume that each $D_b\in \cC$ is a ball inside the
unstable leaf, centered at $h(0,b)$ with diameter equal to $1$, i.e., $R=\frac12$.
Denote by $\lambda=\max_{x} \|(Tf^{-1}\mid_{E^u(x)})\|<1$. Then, 
we write the \emph{`bad' neighborhood of a disk $D_b$ at step $n\geq 1$} by
$$D_{b,n}=\{y: y\in B^u_{\frac{1}{2}+\lambda^n}(h(0, b))\setminus B^u_{\frac{1}{2}}(h(0, b))\},$$
which is an annulus with radius $\frac12$ and thickness $\lambda^n$.
The following lemma is immediate:
\begin{lemma}\label{l.away}
For any $x\in \cF^u_{loc}(h(0, b))\setminus (D_{b,n}\cup D_b)$, $d^u(f^n(x),f^n(D_b))>1$.
\end{lemma}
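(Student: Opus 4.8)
The plan is to deduce the lemma from a single distance estimate reflecting the uniform expansion of the unstable foliation by $f$. Concretely, I would first record the elementary fact that, for any two points $p,q$ lying on a common leaf of $\cF^u$,
\begin{equation*}
d^u\big(f^n(p),f^n(q)\big)\ \ge\ \lambda^{-n}\, d^u(p,q).
\end{equation*}
Indeed, $f$ sends unstable leaves to unstable leaves, so $f^n(p)$ and $f^n(q)$ again lie on a common leaf; pick a curve $\gamma$ inside that leaf joining $f^n(p)$ to $f^n(q)$ with $\length(\gamma)$ as close as desired to $d^u(f^n(p),f^n(q))$, and push it back by $f^{-n}$. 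Every tangent vector of $f^{-n}\circ\gamma$ lies in the $Tf^{-1}$-invariant bundle $E^u$, and at each of the $n$ steps its norm is multiplied by at most $\lambda=\max_x\|Tf^{-1}\mid_{E^u(x)}\|<1$, so $\length(f^{-n}\circ\gamma)\le\lambda^n\,\length(\gamma)$; since $f^{-n}\circ\gamma$ joins $p$ to $q$ inside $\cF^u(p)$, this gives $d^u(p,q)\le\lambda^n d^u(f^n(p),f^n(q))$, as claimed.

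Now take $x\in\cF^u_{loc}(h(0,b))\setminus(D_{b,n}\cup D_b)$. By construction $D_b\cup D_{b,n}=B^u_{\frac12+\lambda^n}(h(0,b))$, so $d^u(x,h(0,b))\ge\frac12+\lambda^n$. For an arbitrary $w\in D_b$ we have $d^u(w,h(0,b))<\frac12$, and the triangle inequality inside the leaf $\cF^u_{loc}(h(0,b))$ gives
\begin{equation*}
d^u(x,w)\ \ge\ d^u(x,h(0,b))-d^u(w,h(0,b))\ >\ \Big(\tfrac12+\lambda^n\Big)-\tfrac12\ =\ \lambda^n .
\end{equation*}
Applying the expansion estimate with $p=x$, $q=w$ (both on $\cF^u_{loc}(h(0,b))$) yields $d^u(f^n(x),f^n(w))\ge\lambda^{-n}d^u(x,w)>1$. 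As $w$ ranges over $D_b$, the points $f^n(w)$ exhaust $f^n(D_b)$, which lies in the single leaf $\cF^u(f^n(h(0,b)))$; hence $d^u(f^n(x),f^n(D_b))=\inf_{w\in D_b}d^u(f^n(x),f^n(w))\ge 1$, with strict inequality once one keeps track of the open/closed conventions for $D_b$ and $D_{b,n}$ (so that the bound on $d^u(x,w)$ above is strict even for $w\in\overline{D_b}$, and the infimum, now taken over the compact set $\overline{f^n(D_b)}$, is attained and exceeds $1$).

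There is essentially no real obstacle here; the only points deserving care are that all distances must be measured within unstable leaves (using that $f$ preserves $\cF^u$), and that one must estimate $d^u(x,w)$ directly rather than routing through the center $h(0,b)$ — after $n$ iterates $f^n(D_b)$ can have diameter far larger than $\lambda^{-n}$, so a crude comparison of $d^u(f^n(x),f^n(h(0,b)))$ with the size of $f^n(D_b)$ would be too weak, whereas the pointwise expansion bound $d^u(f^n(x),f^n(w))\ge\lambda^{-n}d^u(x,w)$ is exactly what is needed.
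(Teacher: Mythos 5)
The paper gives no proof here — the lemma is announced as ``immediate'' — so there is no written argument to compare against; your proof is the natural one, and it is correct. The two ingredients you isolate are exactly the right ones: the uniform expansion bound $d^u(f^n(p),f^n(q))\ge\lambda^{-n}d^u(p,q)$ for $p,q$ on a common unstable leaf (obtained, as you do, by pulling back near-geodesics and using $\lambda=\max_x\|Tf^{-1}|_{E^u(x)}\|$), and the in-leaf triangle inequality giving $d^u(x,w)>\lambda^n$ for $w\in D_b$ directly, rather than trying to route through $h(0,b)$ and compare with the (much larger) diameter of $f^n(D_b)$. The one point you flag but leave slightly open is the strictness of the conclusion: obtaining $d^u(x,w)>\lambda^n$ for all $w$ in the \emph{closed} ball $\overline{D_b}$, which is what makes the attained infimum strictly exceed $1$, requires reading $B^u_{\frac12+\lambda^n}(h(0,b))$ in the definition of $D_{b,n}$ as a closed ball, so that $x\notin D_b\cup D_{b,n}$ forces $d^u(x,h(0,b))>\frac12+\lambda^n$. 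With the open-ball convention one only gets $d^u(f^n(x),f^n(D_b))\ge 1$. This ambiguity is harmless: in the covering argument of Lemma~\ref{l.covering}, what is needed is only that a unit-diameter disk placed at $f^{m+1}(y)$ stay disjoint from the previously selected disks, for which $\ge 1$ already suffices.
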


Then there is $L_3>0$ such that for any $D_b$ and any $n>0$, $\Leb^u(D_{b,n})\leq L_3 \lambda^n$.
In particular, by uniform distortion, there is $L_4>0$ such that for any $q>0$,
\begin{equation}\label{eq.badneighborhood}
\sum_{n\geq 1} \frac{\Leb^u(f^{-q}(D_{b,n}))}{\Leb^u(f^{-q}(D_b))}\leq L_4.
\end{equation}

We will select the open sets $\{f^{-n_i}(D_{b_i})\}$ by induction. Take a small $\vep$ neighborhood of $x$, $B_\vep(x)$, such that $B_{\vep}(x)\in \cC$. For every $n\geq n_0$, we denote by $X_n\subset \Lambda_1\cap I^*$ the set of points such that
$f^n(X_n)\subset B_\vep(x)$.

First we take $n_1>n_0$ to be the first time  such that $X_{n_1}\neq \emptyset$ (such $n_1$ exists due to the definition of $\Lambda_1$, and that $I^*$ consists of typical points of $\Lambda_1$), and for every $y\in X_{n_1}$, there is a disk $D_{f^{n_1}(y)} \supset f^{n_1}(y)$.
Take finitely many such disjoint disks with maximal cardinality, and denote them by $D_{b^{n_1}_1},\cdots,D_{b^{n_1}_{k_{n_1}}}$. Note that points in $X_{n_1}$ that is not covered by any $D_{b^{n_1}_i}$ must be contained in the bad neighborhood $D_{b^{n_1}_j,n_1}$ for some $j=1,\ldots, k_1$. Otherwise, by Lemma~\ref{l.away} one can cover $x$ by another ball $f^{-n_1}(D_b)$, such that $D_b$ is disjoint from every $D_{b^{n_1}_j}$, this contradicts with the choice  of $k_1$ having maximal cardinality.  

Suppose we have selected the disks $D_{b^i_1},\cdots, D_{b^i_{k_i}}$ for $n_1\leq i \leq m$.
Denote by $X_{m+1}^\prime =X_{m+1}\setminus \bigcup_{n_1\leq i \leq m; 1\leq j \leq k_i} f^{-i}(D_{b^i_j}\cup D_{b^i_j,m+1-i})$. In other word, we removed from $X_{m+1}$ all the previously selected disks together with their bad neighborhoods.
Note that by Lemma~\ref{l.away}, for any $y\in X_{m+1}^\prime$, and any $n_1\leq i \leq m; 1\leq j \leq k_i$, 
$$d^u(f^{m+1}(y),f^{m+1-i}(D_{b^i_j}))>1.$$
Therefore for every $y\in X_{m+1}^\prime$, there is a disk $D_{f^{m+1}(y)} \supset f^{m+1}(y)$ such that $f^{-(m+1)}(D_{f^{m+1}(y)})$
is disjoint with $\bigcup_{n_1\leq i \leq m; 1\leq j \leq k_i} f^{-i}(D_{b^i_j})$. Take  finitely many such disjoint disks with maximal cardinality, 
and denote them by $D_{b^{m+1}_1},\cdots,D_{b^{m+1}_{k_{m+1}}}$. Similar as before, points in $X'_{m+1}$ that is not covered by any of the selected  balls must be contained in the $(m+1)$ bad neighborhood of one of these balls, otherwise one can fit in another ball, which contradicts with the choice of $k_{m+1}$ having maximal cardinality. 

We need to show that $\Leb^u(I^*\setminus \bigcup_{i\geq n_1; 1\leq j \leq k_i} f^{-i}(D_{b^i_j}))=0$. Suppose
by contradiction that there is $I^*_1=I^*\setminus \bigcup_{i\geq n_1; 1\leq j \leq k_i} f^{-i}(D_{b^i_j})$
with $\Leb^u(I^*_1)>0$. Recall that by definition, $\Leb^u$-almost every point $x\in I^*_1$ enters $B_\vep(x)$ infinitely many
times, thus 
\begin{equation}\label{eq.infinite}
\sum_{m\geq n_1} \Leb^u(X_m\cap I^*_1)=\infty.
\end{equation} 

But from the previous construction, for any $m>n_1$, 
$$X_m\cap I^*_1 \subset \bigcup_{n_1\leq i \leq m-1; 1\leq j \leq k_i} f^{-i}(D_{b^i_j,m-i}).$$
Then $\sum_{m\geq n_1} \Leb^u(X_m\cap I^*_1)\leq \sum_{i\geq n_1, 1\leq j \leq k_i, n\geq 1} \Leb^u(f^{-i}(D_{b^i_j,n}))$.
Also by \eqref{eq.badneighborhood} and the fact that $\{f^{-i}(D_{b^i_j})\}_{i\geq n_1, 1\leq j \leq k_i}$ are all disjoint, we have
$$\sum_{m\geq n_1} \Leb(X_m\cap I^*_1) \leq \sum_{i\geq n_1, 1\leq j \leq k_i}L_4\Leb^u(f^{-i}(D_{b^i_j})\leq L_4\Leb^u(I),$$
which contradicts with \eqref{eq.infinite}.
\end{proof}

\end{document}